\documentclass[11pt]{article}
\usepackage{tikz}
\usepackage{tikz-qtree}
\usepackage[page, header]{appendix}
\usepackage{titletoc}
\usepackage{amsmath, amsfonts, amssymb, amsthm, amsbsy, amscd, bm, bbm,mathrsfs}
\usepackage{color}
\usepackage[colorlinks,linkcolor=blue,citecolor=blue]{hyperref}
\usepackage[top=1.5cm, bottom=1.5cm, outer=3cm, inner=3cm, heightrounded, marginparwidth=3cm, marginparsep=3cm]{geometry}

\usepackage{comment}
\usepackage{mathtools}
\mathtoolsset{showonlyrefs}





\renewcommand{\leq}{\leqslant}
\renewcommand{\geq}{\geqslant}










\newtheorem{theorem}{Theorem}

\newtheorem{lemma}{Lemma}
\newtheorem{proposition}{Proposition}
\theoremstyle{definition}

\newtheorem{remark}{Remark}

\providecommand{\keywords}[1]
{
	\small	
	\textbf{\textit{Keywords---}} #1
}

\providecommand{\MSC}[2]
{
	\small	
	\textbf{\textit{2020 MSC---}} #1
}

\title{Tail probability of maximal displacement in critical and subcritical branching stable processes}
\author{ \bf  Haojie Hou\footnote{The research of this author is supported by the China Postdoctoral Science Foundation (No. 2024M764112).}
 \hspace{1mm}\hspace{1mm}
	Yiyang Jiang \hspace{1mm}\hspace{1mm}
Yan-Xia Ren\footnote{The research of this author is supported by NSFC (Grant No 12231002) and The Fundamental
Research Funds for the Central Universities, Peking University LMEQF.
 } \hspace{1mm}\hspace{1mm} and \hspace{1mm}\hspace{1mm}
Renming Song\thanks{Research supported in part by a grant from the Simons
Foundation
(\#960480, Renming Song).}
\hspace{1mm} }
	\begin{document}
		\maketitle
		\begin{abstract}
		In this paper, we study critical and subcritical branching $\alpha$-stable processes, $\alpha \in (0, 2)$. We obtain the exact asymptotic behaviors of the tails of the maximal positions of
		all subcritical branching $\alpha$-stable processes with positive jumps.
		In the case of subcritical branching spectrally negative $\alpha$-stable processes, we obtain the exact asymptotic behaviors of the tails of the maximal positions under the assumption that the offspring distributions satisfy the $L\log L$ condition.
		For critical branching $\alpha$-stable processes, we obtain the exact asymptotic behaviors of the tails under the assumption that the offspring distributions belong to the domain of attraction of a $\gamma$-distribution,  $\gamma\in (1, 2]$.
		\end{abstract}\hspace{10pt}
		
		\keywords{Branching stable process; rightmost position; integral equation}

       \MSC{60J80; 60G52; 60G51; 60G70.}

		\section{Introduction and main results}
		\subsection{Introduction}

       A branching L\'evy process is a continuous-time Markov process defined as follows. At time 0, there is a particle at $x\in \mathbb{R}$ and it moves according to a L\'evy process $(\xi_t, \mathbf{P}_x)$. After an exponential time with parameter $1$, independent of the motion, it dies and produces $k$ offspring with probability $p_k$, $k\geq 0$. The offspring move independently according to $\xi$ from the place where they are born and obey the same branching mechanism as their parent. This procedure goes on. For $t\geq 0$, let $N_t$ be the collection of particles alive at time $t$. For $u\in N_t$, we use $X_u(t)$ to denote the position of the particle $u$ at time $t$. The point process $(Z_t)_{t\geq 0}$ defined by
      $$
     Z_t:=\sum_{u\in N_t}\delta_{X_u(t)}, \qquad t \geq 0,
   $$
	is called a branching L\'evy process. We will denote the law of $(Z_t)_{t\geq 0}$ by $\mathbb{P}_x$ and write $\mathbb{P}$ for $\mathbb{P}_0$ for simplicity.
	When $\xi$ is a Brownian motion, $(Z_t)_{t\geq 0}$ is called a branching Brownian motion. When $\xi$ is 	an $\alpha$-stable process, $\alpha\in (0, 2)$,  $(Z_t)_{t\geq 0}$ is called a branching $\alpha$-stable process.
		
Let $m:=\sum^\infty_{k=0}kp_k$ be the mean of the offspring distribution. When $m>1(=1, <1)$, we say that
the branching L\'evy process $(Z_t)_{t\geq 0}$ is supercritical (critical, subcritical). It is well known that in the critical and subcritical cases, $(Z_t)_{t\geq 0}$ will die out in finite time with probability 1. Thus in this case we can define the maximal position of $(Z_t)_{t\geq 0}$ by
$$
M:=\sup_{t>0}\sup_{u\in N_t} X_u(t).
$$

When $m=1$ and $\xi$ is a standard Brownian motion, Sawyer and Fleischman \cite{Sawyer} proved that,  under the assumption $\sum_{k=0}^\infty k^3 p_k<\infty$,
		\begin{align}\label{Tail-probability1}
			\lim_{x\to +\infty} x^2 \mathbb{P}\left(M\geq x\right)  =\frac{6}{\sigma^2 },
		\end{align}
where $\sigma^2$ is the variance of the offspring distribution. Profeta \cite{profeta:hal-03736526} extended the result above to some
critical branching spectrally negative L\'evy processes under the same third moment condition on the offspring distribution. When $m=1$, $\xi$ is an $\alpha$-stable process with positive jumps and $\sum_{k=0}^\infty k^3 p_k<\infty$, Lalley and Shao \cite{lalley}, and Profeta \cite{profeta}
proved that
\begin{align} \label{result1}
		\lim_{x\to +\infty} x^{\alpha/2}\mathbb{P}\left(M\geq x\right)  =c_1(\alpha),
\end{align}
where $c_1(\alpha)$ is an explicit positive constant. For critical branching spectrally negative $\alpha$-stable processes
with
 $\alpha\in (1, 2)$,  Profeta \cite[Corollary 1.3]{profeta:hal-03736526} proved that,
when $\sum_{k=0}^\infty k^3 p_k<\infty$,
\begin{align}\label{Profeta-bound}
        	0< \liminf_{x\to\infty} x^{\alpha} \mathbb{P}\left(M\geq x\right)\leq \limsup_{x\to\infty} x^{\alpha} \mathbb{P}\left(M \geq x\right)<\infty.
        \end{align}

 When $m<1$ and $\xi$ is a standard Brownian motion, Sawyer and Fleischman \cite{Sawyer} proved that,  under the assumption $\sum_{k=0}^\infty k^3 p_k<\infty$, there exists a function $c(\cdot)$ bounded between two positive constants such that
\begin{align}\label{Tail-probability2}
			\lim_{x\to +\infty} \frac{\mathbb{P}\left(M\geq x\right)}{(1-m)c(x)e^{-\sqrt{2(1-m)}x}}=1.
		\end{align}
For subcritical branching $\alpha$-stable processes with positive jumps,
Profeta \cite{profeta} proved that,
 under the assumption $\sum_{k=0}^\infty k^3 p_k<\infty$,
		\begin{align}\label{result2}
		\lim_{x\to +\infty} x^{\alpha}\mathbb{P}\left(M\geq x\right)=
     \frac{c_2(\alpha)}{1-m}
		\end{align}
for some explicit constant $c_2(\alpha)$. 		
For subcritical branching spectrally negative processes,
Profeta \cite[Theorem 1]{profeta:hal-03736526}
proved that, under the assumption $\sum_{k=0}^\infty k^3 p_k<\infty$,
 \begin{align}\label{Profeta-limit}
    \lim_{x\to\infty} e^{c_4x}  \mathbb{P}\left(M\geq x\right) = c_3
      \end{align}
  for some  explicit positive constants $c_3, c_4$.

 For results on the asymptotic behaviors of the tails of maximal positions of (sub)critical branching random walks, see \cite{lalleywalk, neuman2017maximal}.

 The purpose of this paper is  to
 establish the exact asymptotic behaviors of the tails of the maximal positions of (sub)critical branching $\alpha$-stable processes under minimal conditions on the offspring distributions. More precisely, we will obtain (i) the exact asymptotic behaviors of the tails of the maximal positions of subcritical branching $\alpha$-stable processes with positive jumps without any extra assumption on the offspring distributions; (ii) the exact asymptotic behaviors of the tail of the maximal positions of subcritical branching spectrally negative $\alpha$-stable processes under the assumption that the offspring distributions satisfy the $L\log L$ condition; (iii) the exact asymptotic behaviors of the tails of critical branching $\alpha$-stable processes under the assumption that the offspring distributions belong to the domain of attraction of a $\gamma$-distribution,  $\gamma\in (1, 2]$.

		\subsection{Main results}
		
Before we state our main results, we recall some useful facts about stable processes.
In this paper, we always assume that the spatial motion $\xi$ is a (strictly) $\alpha$-stable process, $\alpha\in (0, 2)$. For basic information on $\alpha$-stable process, see, for instance, \cite[Chap. VIII]{bertoin}. We assume that the L\'{e}vy measure of $\xi$ is given by
		\begin{equation}\label{def-v}
			v_\alpha(\mathrm{d}x):=c_+x^{-(1+\alpha)}{\bf 1}_{(0,\infty)}(x){\rm d} x+c_-|x|^{-(1+\alpha)}{\bf 1}_{(-\infty,0)}(x){\rm d} x,
		\end{equation}
where $c_+$ and $c_-$ are non-negative numbers with at least one of them being positive.
Let
$\Psi(\theta):=-\ln \mathbf{E}(e^{\mathrm{i}\theta \xi_{1} })$
 be the characteristic exponent of $\xi$. It is known (see, for instance  \cite[Chapter 1.2.6]{kyprianou2014fluctuations}) that, 		
$$
		\Psi(\theta)=
		\begin{cases}
			c_*|\theta|^\alpha\left(1-\mathrm{i} \beta \tan \frac{\pi \alpha}{2} \operatorname{sgn} \theta\right),
			& \text { for } \alpha \in(0,1) \cup(1,2) \text { and } \beta \in[-1,1],\\
			c_*|\theta|+\mathrm{i} \theta \eta, & \text { for } \alpha=1
		\end{cases}
		$$
		where $ \eta \in \mathbb{R}$, $c_*:=-\left(c_++c_-\right) \Gamma(-\alpha) \cos (\pi \alpha / 2)$, and $\beta=\left(c_+-c_-\right) /\left(c_++c_-\right)$ if $\alpha \in(0,1) \cup(1,2)$. Here $\operatorname{sgn} \theta=1_{(\theta>0)}-1_{(\theta<0)}$.
		Note that, for (strictly) 1-stable process, $c_+=c_-$
	(see, for example, \cite[Theorem 14.7 (v)]{Sato}).
It follows from \cite[Example 1.1, Lemma 2.1 and the last  sentence in Subsection 1.1]{Ren_Song_Zhang_2023}	 (see also \cite[Proposition VIII.4]{bertoin}) that,  when $c_+>0$,
		\begin{align}\label{Tail-of-xi}
			\lim_{x\to\infty} x^{\alpha} \mathbf{P}_0(\xi_1\geq x) = \nu_\alpha ((1,\infty))= \frac{c_+}{\alpha}.
		\end{align}
 When $c_+=0$ and $\alpha\in (0,1)$, $-\xi$ is a subordinator and $M=0$ almost surely.
A spectrally negative 1-stable process reduces to the drift $-\eta t$. When $\eta\geq 0$, $M=0$.
Thus in the case of branching spectrally negative $\alpha$-stable processes,
 exclude the above cases and assume
either $\alpha\in (1,2)$, or $\alpha=1$ and $\eta<0$.
In this case, it is easy to see that
for any $\lambda>0$,
		\begin{align}\label{Def-of-Upsilon}
			 \mathbf{E}_0(e^{\lambda \xi_1})=
			 \exp\left\{C_1(\alpha) \lambda^\alpha \right\},
			 \quad \mbox{where}\quad 	C_1(\alpha)=
			-\Psi(-\mathrm{i}).
		\end{align}
		Define $\tau_{y} = \inf\{t>0, \xi_t\geq y\}$. For any $x<y$, combining \eqref{Def-of-Upsilon} with  the fact that
		$\mathbf{P}_x\left(\xi_{\tau_y}=y\right)=1$,
		 we have that for any $\lambda>0$,
		\begin{align}\label{Laplace-of-stopping-time}
			\mathbf{E}_x\left(e^{-\lambda \tau_y} \right) = \exp\left\{-
			C_1(\alpha)^{-1/\alpha}\lambda^{1/\alpha}
					(y-x)\right\}.
		\end{align}		

In the critical case, we will need the following assumption on the offspring distribution:
\begin{itemize}
			\item [{\bf(H)}]

			The offspring distribution $\{p_k: k\geq 0\}$ belongs to the domain of attraction of a $\gamma$-stable, $\gamma\in (1,2]$, distribution.
			More precisely, either there exist $\gamma\in (1,2)$ and $\kappa_{\gamma}\in(0,\infty)$ such that
			\[
			\lim_{n\to\infty} n^\gamma \sum_{k=n}^\infty p_k = \kappa_{\gamma},
			\]
			or that (corresponding $\gamma=2$)
			\[
			\sum_{k=0}^\infty k^2 p_k<\infty.
			\]
		\end{itemize}
		
		Recall that $\sigma^2$ is the variance of the offspring distribution. We define
		\begin{align}\label{Def-of-Lambda-gamma}
			C_2(\gamma): =\begin{cases}
				\frac{\Gamma(2-\gamma)}{\gamma-1}\kappa_{\gamma} & \text { for } \gamma \in(1,2);\\
				\frac{1}{2}\sigma^2& \text { for } \gamma=2.
			\end{cases}
		\end{align}

Our first main result is on (sub)critical branching stable processes with positive jumps.

\begin{theorem}\label{t1}
	Suppose that $c_+>0$.
	
	i) If $m<1$, then
	\begin{align}
		\lim_{x\to +\infty} x^{\alpha}\mathbb{P}\left(M\geq x\right)  =\frac{c_+}{(1-m)\alpha}.
	\end{align}
	
	ii) If $m=1$ and  {\bf(H)} holds, then
	$$
	\lim_{x\rightarrow+\infty}x^{\alpha/\gamma} \mathbb{P}\left(M \geq x\right)=\left(\frac{c_+}{\alpha C_2(\gamma)}\right)^{\frac{1}{\gamma}},
	$$
	where $C_2(\gamma)$ is defined in \eqref{Def-of-Lambda-gamma}.
\end{theorem}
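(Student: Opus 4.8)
The plan is to work from a nonlinear integral equation for $h(x):=\mathbb{P}(M\ge x)$. Writing $f(s)=\sum_{k\ge0}p_ks^k$, $\Phi(s):=1-f(1-s)$, $\tau_x=\inf\{t>0:\xi_t\ge x\}$, and letting $\mathbf{e}$ be the first branching time (an independent $\mathrm{Exp}(1)$ variable), one conditions on $\mathbf{e}$, on $\xi_{\mathbf{e}}$, and on the offspring number, and then invokes the strong Markov property of $\xi$ and the branching property to get, for $x>0$,
\begin{equation}\label{prop-eq}
h(x)=p(x)+\mathbf{E}_0\!\left[\int_0^{\tau_x}e^{-t}\,\Phi\bigl(h(x-\xi_t)\bigr)\,\mathrm{d}t\right],\qquad p(x):=\mathbf{E}_0\bigl[e^{-\tau_x}\bigr]=\mathbf{P}_0\Bigl(\sup_{s\le\mathbf{e}}\xi_s\ge x\Bigr),
\end{equation}
where $h$ is extended by $h\equiv1$ on $(-\infty,0]$, is non-increasing with $h\le1$, and $h(x)\to0$ since $M<\infty$ a.s. Convexity of $f$ gives $0\le\Phi(s)\le\min(ms,1)$, so $\Phi(s)=ms-R(s)$ with $R\ge0$, $R(s)=o(s)$ as $s\downarrow0$; an Abelian/Tauberian computation on generating functions shows that under {\bf(H)} (in the critical case $m=1$) one has $R(s)\sim C_2(\gamma)s^{\gamma}$ as $s\downarrow0$, with $cs^{\gamma}\le R(s)\le Cs^{\gamma}$ on $[0,1]$. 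I will use throughout: (a) from \eqref{Tail-of-xi} and standard L\'evy tail asymptotics, $p(x)\sim\tfrac{c_+}{\alpha}x^{-\alpha}$, $\mathbf{P}_0(\xi_{\mathbf{e}}\ge x)\sim\tfrac{c_+}{\alpha}x^{-\alpha}$, and the overshoot estimate $\mathbf{P}_0(\sup_{s\le\mathbf{e}}\xi_s\in(x-K,x))=o(x^{-\alpha})$ for each fixed $K$ (a single up-jump reaching $x-K$ overshoots it by an amount of order $x$, hence clears $x$ too); (b) a ``resolvent-transfer'' principle: if $g\ge0$ is non-increasing with $g(x)\le Cx^{-\rho}$ and $x^{\rho}g(x)\to L$ for some $\rho\in(0,\alpha]$, then $x^{\rho}\mathbf{E}_0[\int_0^{\tau_x}e^{-t}g(x-\xi_t)\,\mathrm{d}t]\to L$ --- obtained by splitting on $\{\xi_t\le\delta x\}$ versus $\{\xi_t\in(\delta x,x)\}$, the contribution of the latter being $o(x^{-\rho})$ because integrating $g$ over the landing range of the required big jump yields only a bounded factor, or one growing like $x^{1-\rho}$, which is beaten by $x^{-\rho}$ since $\rho\le\alpha<2$.

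For part i) ($m<1$) I would first iterate \eqref{prop-eq}, using $\Phi(s)\le ms$ with $m<1$, to get the crude bound $h(x)\le Cx^{-\alpha}$. Plugging this back makes the $R$-term negligible, $R(h(x-\xi_t))=o(x^{-\alpha})$, so $h(x)=p(x)+m\,\mathbf{E}_0[\int_0^{\tau_x}e^{-t}h(x-\xi_t)\,\mathrm{d}t]+o(x^{-\alpha})$. Multiplying by $x^{\alpha}$ and applying the resolvent-transfer principle (with $\rho=\alpha$) to $\limsup$ and $\liminf$ separately, the numbers $\bar\ell:=\limsup x^{\alpha}h(x)$ and $\underline\ell:=\liminf x^{\alpha}h(x)$ both solve $\ell=\tfrac{c_+}{\alpha}+m\ell$, whence $\bar\ell=\underline\ell=\tfrac{c_+}{(1-m)\alpha}$.

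Part ii) ($m=1$, {\bf(H)}) is the substantial case. First I would establish the order $h(x)\asymp x^{-\alpha/\gamma}$ together with a modicum of regularity of $h$. The upper bound $h\le Ax^{-\alpha/\gamma}$ should come from a supersolution: the map $\mathcal{G}$ given by the right side of \eqref{prop-eq} is monotone and $h$ is its minimal nonnegative fixed point, and for $A$ large $\bar h(x):=\min(1,Ax^{-\alpha/\gamma})$ satisfies $\mathcal{G}(\bar h)\le\bar h$, because $\mathbf{E}_0[\int e^{-t}R(\bar h(x-\xi_t))\,\mathrm{d}t]\gtrsim A^{\gamma}x^{-\alpha}$ eventually dominates both $p(x)=O(x^{-\alpha})$ and the linear defect $\bar h(x)-\mathbf{E}_0[\int e^{-t}\bar h(x-\xi_t)\,\mathrm{d}t]$, the latter being only $O(x^{-\alpha/\gamma-\alpha})=o(x^{-\alpha})$ by stable scaling. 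The matching lower bound $h\ge cx^{-\alpha/\gamma}$ I would obtain from a truncated first/second-moment estimate on particles whose displacement path reaches $x$: restricting to subtrees of total progeny $\mathcal{T}\lesssim x^{\alpha}$, the expected number of such particles is $\asymp x^{-\alpha}\,\mathbb{E}[\mathcal{T}\wedge x^{\alpha}]\asymp x^{-\alpha}\cdot x^{\alpha-\alpha/\gamma}=x^{-\alpha/\gamma}$, using the total-progeny tail $\mathbb{P}(\mathcal{T}\ge n)\asymp n^{-1/\gamma}$ (an Otter--Dwass-type consequence of {\bf(H)}), and a second-moment bound upgrades this to a lower bound of the same order. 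With $h\asymp x^{-\alpha/\gamma}$ in hand, the crux is a cancellation in \eqref{prop-eq}: the $1$-resolvent of $\xi$ exceeds the identity on $h$ by $\mathbf{E}_0[h(x-\xi_{\mathbf{e}})-h(x)]\sim\tfrac{c_+}{\alpha}x^{-\alpha}$ (the contribution of one up-jump landing at or above $x$, where $h\equiv1$), and this is exactly offset by the $p(x)$ removed on the event $\{\mathbf{e}\ge\tau_x\}$, so $\mathbf{E}_0[\int_0^{\tau_x}e^{-t}h(x-\xi_t)\,\mathrm{d}t]=h(x)+o(x^{-\alpha})$ and likewise (more easily, since $R\circ h\asymp x^{-\alpha}$) $\mathbf{E}_0[\int_0^{\tau_x}e^{-t}R(h(x-\xi_t))\,\mathrm{d}t]=R(h(x))+o(x^{-\alpha})$. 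Substituting these into \eqref{prop-eq} with $m=1$ collapses it to $R(h(x))=p(x)+o(x^{-\alpha})\sim\tfrac{c_+}{\alpha}x^{-\alpha}$; combining with $R(h(x))\sim C_2(\gamma)h(x)^{\gamma}$ gives $h(x)^{\gamma}\sim\tfrac{c_+}{\alpha C_2(\gamma)}x^{-\alpha}$, i.e. $x^{\alpha/\gamma}h(x)\to\bigl(\tfrac{c_+}{\alpha C_2(\gamma)}\bigr)^{1/\gamma}$. (The limit now exists genuinely, circularity being avoided by routing the argument through $R\circ h\asymp x^{-\alpha}$ rather than through $h$ itself.)

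The hard part will be the critical case: both the sharp two-sided a priori bound $h(x)\asymp x^{-\alpha/\gamma}$ (and its regularity companion) and the two cancellation identities rest on delicate $\alpha$-stable estimates --- heavy-tailed overshoot, the exact size of the resolvent excess, and control of $\mathbf{E}_0[h(x-\xi_{\mathbf{e}})-h(x)]$ exploiting the (approximate) mean-zero property of the strictly stable $\xi$ --- combined with the expansion $R(s)\sim C_2(\gamma)s^{\gamma}$, which for $\gamma\in(1,2)$ is precisely where {\bf(H)} enters and is what forces the exponent $\alpha/\gamma$, going beyond the $\sum_k k^3p_k<\infty$ results of Sawyer--Fleischman, Lalley--Shao and Profeta.
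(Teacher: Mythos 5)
Your part (i) is essentially sound and amounts to a real--space version of the paper's argument: your integral equation for $h$ is exactly \eqref{integral equation} of Proposition \ref{Proposition-inte} rewritten (note $\Phi(s)=s-(1-m)s-G(s)$), and your ``resolvent-transfer'' limsup/liminf argument closing the loop through the contraction $\ell=\tfrac{c_+}{\alpha}+m\ell$ replaces the Laplace-transform/Tauberian route; the crude bound $h(x)\lesssim x^{-\alpha}$ and the refined big-jump (landing-density) estimate you invoke are genuinely obtainable, so that half is an acceptable, somewhat more elementary alternative. The gap is in part (ii), at precisely the step you call the crux.

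The two cancellation identities $\mathbf{E}_0\bigl[\int_0^{\tau_x}e^{-t}h(x-\xi_t)\,\mathrm{d}t\bigr]=h(x)+o(x^{-\alpha})$ and $\mathbf{E}_0\bigl[\int_0^{\tau_x}e^{-t}R(h(x-\xi_t))\,\mathrm{d}t\bigr]=R(h(x))+o(x^{-\alpha})$ do not follow from anything you establish. After isolating the jump beyond $x$ and the event $\{S_{\mathbf{e}}\geq x\}$, the first identity requires $\mathbf{E}_0\bigl[(h(x-\xi_{\mathbf{e}})-h(x))1_{\{|\xi_{\mathbf{e}}|\leq\delta x\}}\bigr]=o(x^{-\alpha})$; but monotonicity plus $h\asymp x^{-\alpha/\gamma}$ only give increments of size up to $x^{-\alpha/\gamma}\gg x^{-\alpha}$, so you need an oscillation bound such as $h(x-y)-h(x)\lesssim(1+y)x^{-\alpha/\gamma-1}$ on $y\leq\delta x$, i.e.\ genuine regularity of the tail of $M$ at scale $x$. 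You acknowledge needing ``a modicum of regularity'' but give no mechanism for it, and the Feynman--Kac/creeping device that produces such Lipschitz-type control in this paper (used for Theorems \ref{t2} and \ref{t3}) is unavailable here precisely because $c_+>0$; the ``approximate mean-zero'' of $\xi$ cannot substitute (for $\alpha\leq 1$ there is no mean, and for $c_-=0$, $\alpha<1$, $\xi$ is a subordinator) — the obstruction is regularity of $h$, not the drift of $\xi$. The second identity is worse: it demands relative precision $o(1)$ on a quantity of exact order $x^{-\alpha}$, which your transfer principle delivers only if $x^{\alpha}R(h(x))$ is already known to converge — that is the conclusion, so the argument is circular despite your parenthetical claim. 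The paper is built to avoid exactly this: it divides out the factor $1-\mathbf{E}_0[e^{-\lambda S_{\mathbf{e}}}]$ exactly in Laplace space (Lemmas \ref{l3} and \ref{l5}), reduces everything to the small-$\lambda$ asymptotics of $\mathcal{L}[u^{\gamma}]$ (resp.\ $\mathcal{L}[xu^{\gamma}(x)]$), and recovers the pointwise limit via the Tauberian theorem and Karamata's monotone density theorem, which need only monotonicity of $u^{\gamma}$. Your auxiliary inputs are also heavier than necessary: the paper's a priori bound $u(x)\leq Ax^{-\alpha/\gamma}$ (Lemma \ref{l6}) follows from the survival-probability asymptotics under {\bf(H)} and a one-moment bound, whereas your supersolution route needs a comparison/minimality principle for the fixed-point map that you have not proved, and your matching lower bound via a truncated second moment must additionally cope with infinite offspring variance when $\gamma<2$ — and no pointwise lower bound is needed anywhere in the paper's proof of Theorem \ref{t1}.
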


		\begin{remark}
Theorem \ref{t1} (ii) is also contained in  Theorem 3 of the recent preprint \cite{Profeta2025}.
Since there are  some differences between the proofs of Theorem \ref{t1} (ii) and \cite[Theorem 3]{Profeta2025}, we include it here for
completeness.
The proof of Theorem \ref{t1} is an adaptation of that of the corresponding result in \cite{profeta}
below. {\bf(H)} only changes the behaviors of $\Phi_0$ and $\Phi_R$, defined in  \eqref{main}.
 In \cite[(1.5)]{profeta},  the third moment condition on the offspring distribution is used to estimate $\Phi_R$ directly while in the $\gamma$-stable branching case, we have to do a more careful analysis.
		\end{remark}

Our second main result is
on the case for critical branching spectrally negative $\alpha$-stable process with $\alpha\in (1,2)$, and it generalizes and refines \eqref{Profeta-bound}.

	\begin{theorem}\label{t2}
Suppose $c_+=0$ and $ \alpha\in (1,2)$
or $\alpha=1$ and $\eta <0$.
 If $m=1$ and  {\bf(H)} holds,
then there exists a constant $C_3(\alpha,\beta,\gamma)\in (0,\infty)$ such that
		\begin{align}
			\lim_{x\to\infty} x^{\frac{\alpha}{\gamma-1}}  \mathbb{P}\left(M \geq x\right) = C_3(\alpha, \beta, \gamma).
		\end{align}
	\end{theorem}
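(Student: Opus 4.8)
The plan is to follow the same strategy as in the proof of Theorem \ref{t1}, namely to study the function $u(x):=\mathbb{P}_x(M\ge 0)=\mathbb{P}_0(M\ge -x)$ (equivalently $1-\mathbb{P}(M\ge x)$ shifted) through the integral/differential equation it satisfies, but now adapted to the spectrally negative case where we no longer have positive jumps to exploit. First I would set $w(x):=\mathbb{P}(M\ge x)$ for $x>0$ and recall the standard first-moment/branching decomposition: conditioning on the first branching time (exponential, rate $1$) and on the offspring number, one obtains a renewal-type equation
\begin{align}
w(x) = \mathbf{E}_0\!\left[e^{-\tau}\,; \tau<\infty,\ \xi_\tau \text{ first reaches } \cdots\right] + \mathbf{E}_0\!\left[\int_0^\infty e^{-t}\, f\!\big(w(x-\xi_t)\big)\, \d t\right],
\end{align}
where $f(s)=\sum_k p_k s^k$ is the offspring generating function and the first term records the probability that the spatial motion alone (before any branching) reaches level $x$. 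Because the process is spectrally negative, the only way to cross level $x$ upward is continuously, so $\mathbf{P}_x(\xi_{\tau_y}=y)=1$ and the overshoot vanishes; this is exactly where \eqref{Laplace-of-stopping-time} enters, giving the clean exponential Laplace transform of $\tau_y$. Writing $g(s)=f(s)-s=\sum_k p_k(s^k-s)$ and using $m=1$ so that $g(s)=O((1-s)^\gamma)$ (respectively $O((1-s)^2)$ when $\gamma=2$) as $s\uparrow 1$ — this is the quantitative input from assumption {\bf(H)}, via $C_2(\gamma)$ — one reduces the problem to the asymptotics of a nonlinear equation of the form $(\text{generator of }\xi)\,w \asymp g(1-w)$ at infinity, heuristically $-C_1(\alpha)\,(-\Delta)^{\alpha/2}$-type behaviour balanced against $C_2(\gamma)\,w^\gamma$ (with appropriate sign/asymmetry constants depending on $\beta$).

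The core step is a matched-asymptotics / scaling argument. I expect $w(x)\sim C_3 x^{-\alpha/(\gamma-1)}$, and the exponent is pinned down by dimensional analysis of the balance equation: applying a fractional operator of order $\alpha$ to $x^{-p}$ produces $x^{-p-\alpha}$, while the nonlinearity $w^\gamma$ scales like $x^{-p\gamma}$; equating $p+\alpha = p\gamma$ gives $p=\alpha/(\gamma-1)$, which matches the claimed power. To make this rigorous I would (a) establish two-sided bounds $c_- x^{-p}\le w(x)\le c_+ x^{-p}$ by constructing sub- and super-solutions to the integral equation (polynomial test functions, using \eqref{Tail-of-xi}-type tail estimates for $\xi$ restricted to the negative-jump side and the stopping-time Laplace transform), thereby recovering and sharpening Profeta's bound \eqref{Profeta-bound}; then (b) upgrade to an exact limit. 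For step (b) the natural route is to introduce the rescaled function $w_\lambda(x):=\lambda^{p} w(\lambda x)$ and show it is precompact (monotonicity of $w$ plus the a priori bounds give equicontinuity after the scaling) and that any subsequential limit solves a fixed scale-invariant equation whose only solution in the relevant class is a specific multiple of $x^{-p}$; uniqueness of that limiting profile forces convergence of $w_\lambda$ and identifies $C_3(\alpha,\beta,\gamma)$. Alternatively, a Tauberian/renewal argument on the Laplace transform of $x\mapsto w(e^x)$ could be used, but the scaling-limit approach is cleaner given the lack of an explicit formula.

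The main obstacle, and the place where the spectrally negative case genuinely differs from Theorem \ref{t1}, is the absence of a positive-jump tail like \eqref{Tail-of-xi} to directly produce the leading behaviour: here the upward crossing is driven by the continuous (Gaussian-like, but $\alpha$-stable) component via the delicate stopping-time transform \eqref{Laplace-of-stopping-time}, and controlling the error terms in the integral equation requires careful estimates on $\mathbf{E}_0[w(x-\xi_t)]$ for large $x$, where the negative jumps of $\xi$ move mass toward the origin and the regularly-varying tail of $w$ itself must be fed back into the estimate. Keeping track of the asymmetry parameter $\beta$ (which affects the constant $C_1(\alpha)$ through $\Psi(-\mathrm{i})$ and hence the effective "diffusivity" in the balance law) is what makes $C_3$ depend on $\beta$, and verifying that the limiting scale-invariant equation has a unique admissible solution — rather than a one-parameter family — is the crux; I anticipate this uniqueness coming from a maximum-principle argument for the fractional operator combined with the boundary behaviour forced by $w\le 1$ and $w(0{+})=1$.
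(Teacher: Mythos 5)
Your overall architecture — derive a nonlinear balance equation for $u$, read off the exponent $\alpha/(\gamma-1)$ from the scaling $p+\alpha=p\gamma$, establish two-sided bounds $u(x)\asymp x^{-\alpha/(\gamma-1)}$, then rescale $U^{(x)}(y)=x^{\alpha/(\gamma-1)}u(xy)$, extract subsequential limits solving a scale-invariant equation, and conclude by uniqueness of the limiting profile — is exactly the architecture of the paper's proof (Lemmas \ref{l:lemma4}, \ref{test-lemma}, Proposition \ref{prop1} and the final argument). The paper's version of your "renewal-type equation" is the multiplicative Feynman--Kac representation \eqref{FK-formula} along first-passage levels of the dual spectrally positive process $\widetilde{\xi}=-\xi$, which uses the absence of overshoot exactly as you anticipate via \eqref{Laplace-of-stopping-time}. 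So the plan is on target in its skeleton.

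However, the two steps you yourself flag as delicate are precisely where the proposal has genuine gaps, and in both cases the mechanism you suggest is not the one that works (or at least is far from obviously workable). First, the a priori bounds: you propose sub/supersolutions built from "polynomial test functions, using \eqref{Tail-of-xi}-type tail estimates for $\xi$ restricted to the negative-jump side", but with $c_+=0$ there is no one-jump event producing the tail, and under only the hypothesis {\bf(H)} (no third moment) one cannot simply import Profeta's bound \eqref{Profeta-bound}. The paper instead proves $0<\liminf x^{\alpha/(\gamma-1)}u(x)\le\limsup<\infty$ (Lemma \ref{test-lemma}) by a rather intricate contradiction argument that already requires the \emph{local} scaling limit $\phi(y)=\lim_k u(x_k+yu(x_k)^{-(\gamma-1)/\alpha})/u(x_k)$ of Lemma \ref{l:lemma4} and the uniform convergence \eqref{step_5}; you would need to supply an actual construction of comparison functions for the nonlocal operator, which is not sketched. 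Second, and more seriously, the uniqueness of the scale-invariant profile with boundary data $U(0+)=\infty$, $U(\infty)=0$ is the crux, and "a maximum-principle argument for the fractional operator" is only a hope: comparing two solutions that both blow up at the boundary of a nonlocal Dirichlet problem is exactly the kind of statement that does not follow from a routine comparison principle. The paper resolves this by identifying solutions of \eqref{Another-PDE} with Laplace functionals of Dynkin's exit measures of a super $\alpha$-stable process, giving the representation $U(z)=-\log\mathbb{P}_{\delta_z}(X_{(0,\infty)}(\{0\})=0)$ and hence uniqueness for free; this superprocess input (and likewise the uniqueness in Proposition \ref{prop1}(i) used for $\phi$) is absent from your plan and would need to be replaced by a genuine analytic uniqueness proof before the argument closes.
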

	
	\begin{remark}
		The constant $C_3(\alpha,\beta,\gamma)$ has a probabilistic representation via a super $\alpha$-stable process, see the proof of Proposition \ref{prop1}.
    In the proof of Proposition \ref{prop1}, we use the fact that a superprocess is an appropriate scaling limit of branching Markov processes.
	\end{remark}

Our last main result is on subcritical branching spectrally negative $\alpha$-stable process with $\alpha\in (1,2)$.

\begin{theorem}\label{t3}
Suppose that  $c_+=0$, and that either $\alpha\in (1,2) $
or $\alpha=1$ and $\eta<0$.
If $m<1$ and $\sum_{k=0}^\infty k(\log k)p_k<\infty$,
	then there exists a positive constant $C_4(\alpha)$ such that
	 \begin{align}
		\lim_{x\to\infty} e^{((1-m)/C_1(\alpha))^{1/\alpha}x}  \mathbb{P}\left(M\geq x\right) = C_4(\alpha).
	\end{align}
\end{theorem}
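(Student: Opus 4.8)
The guiding picture is probabilistic. Since $\xi$ has no positive jumps, a particle can cross a level only continuously, so for each $y>0$ there is a $\mathbf{P}_0$-a.s. well defined finite set of \emph{pioneers at level $y$}, the particles $u$ whose trajectory reaches $y$ while the trajectory of every strict ancestor of $u$ has stayed below $y$. Writing $Y_y$ for their number (so $Y_0=1$), the branching Markov property, the strong Markov property of $\xi$ at first‑passage times, and the absence of memory of the branching clock together show that $(Y_y)_{y\ge 0}$ is a continuous‑parameter Markov branching process issued from a single individual, and that $\{M\ge x\}=\{Y_x\ge 1\}$; thus $w(x):=\mathbb{P}(M\ge x)$ is the survival probability of a \emph{subcritical} branching process, and the $L\log L$ hypothesis is the incarnation of the $x\log x$ condition governing whether $\mathbb{P}(Y_x\ge1)/\mathbb{E}[Y_x]$ has a strictly positive limit. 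I will, however, argue analytically, adapting the renewal‑equation method of \cite{profeta:hal-03736526}. A first‑branching decomposition at the exponential time $T$, together with $\mathbf{P}_x(\xi_{\tau_y}=y)=1$ and \eqref{Laplace-of-stopping-time}, gives for $x>0$
\[
w(x)=e^{-\rho_1 x}+m\,\mathbf{E}_0\!\left[\mathbf{1}_{\{\tau_x>T\}}\,w(x-\xi_T)\right]-\mathbf{E}_0\!\left[\mathbf{1}_{\{\tau_x>T\}}\,R\!\left(w(x-\xi_T)\right)\right],
\]
where $f(s)=\sum_{k\ge0}p_ks^k$, $R(v):=f(1-v)-1+mv$ is nonnegative and nondecreasing on $[0,1]$, $\rho_1:=C_1(\alpha)^{-1/\alpha}$, and $w\equiv 1$ on $(-\infty,0]$. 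Put $\rho:=\left((1-m)/C_1(\alpha)\right)^{1/\alpha}$, the exponent in the statement; since $m<1$, $0<\rho<\rho_1$.

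Next I reduce this to a renewal equation. By \eqref{Def-of-Upsilon}, $\mathbf{E}_0[e^{\lambda\xi_t}]=e^{tC_1(\alpha)\lambda^\alpha}$ for all $\lambda\ge 0$ (for $\alpha=1$ the motion is the deterministic drift $C_1(1)t$), whence $\mathbf{E}_0[e^{\rho\xi_T}]=\int_0^\infty e^{-t}e^{tC_1(\alpha)\rho^\alpha}\,dt=1/m$. Setting $W(x):=e^{\rho x}w(x)$ and multiplying the equation by $e^{\rho x}$, the linear term becomes $\int_{\mathbb{R}}W(x-z)\,\kappa_x(dz)$, with $\kappa_x(dz):=m\,e^{\rho z}\,\mathbf{P}_0(\xi_T\in dz,\ \tau_x>T)$ a sub‑probability kernel increasing, as $x\to\infty$, to the spread‑out probability law $\kappa(dz):=m\,e^{\rho z}\,\mathbf{P}_0(\xi_T\in dz)$, whose mean $\mu_\kappa=C_1(\alpha)\alpha\rho^{\alpha-1}/m$ is finite and positive. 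The same computation shows that the first‑moment function $h(x):=\mathbb{E}[Y_x]$ solves the linear part of the equation and equals $e^{-\rho x}$ for $x\ge0$ (equivalently, $h$ is multiplicative because $(Y_y)$ is a Markov branching process); since $w(x)=\mathbb{P}(Y_x\ge1)\le\mathbb{E}[Y_x]$, we obtain the a priori bound $0\le W\le 1$. Writing $\kappa_x=\kappa-\varepsilon_x$ with $\varepsilon_x\ge0$ of total mass $e^{(\rho-\rho_1)x}\to0$, the equation becomes a genuine renewal equation $W=\widetilde S+W*\kappa$ with source
\[
\widetilde S(x)=e^{(\rho-\rho_1)x}\mathbf{1}_{\{x>0\}}+e^{\rho x}\mathbf{1}_{\{x\le0\}}-e^{\rho x}\,\mathbf{E}_0\!\left[\mathbf{1}_{\{\tau_x>T\}}R\!\left(w(x-\xi_T)\right)\right]-\int_{\mathbb{R}}W(x-z)\,\varepsilon_x(dz).
\]

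The hypothesis enters in estimating the nonlinear term. Using $w\le e^{-\rho\cdot}$, the monotonicity of $R$, the substitution $u=e^{-\rho y}$, the expansion $R(u)=\sum_{k\ge2}p_k\left((1-u)^k-1+ku\right)$ and the elementary inequality $\int_0^1 u^{-2}\left((1-u)^k-1+ku\right)du\le \tfrac k2+k\log k$, one gets
\[
\int_0^\infty e^{\rho y}R\!\left(w(y)\right)dy\ \le\ \frac1\rho\int_0^1\frac{R(u)}{u^2}\,du\ \le\ \frac1\rho\Big(\frac m2+\sum_{k}k(\log k)p_k\Big)<\infty,
\]
and $\sum_k k(\log k)p_k<\infty$ is in fact equivalent to $\int_0^1 R(u)u^{-2}\,du<\infty$. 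Combined with $R(w(y))=R(1)$ for $y\le0$, this shows $y\mapsto e^{\rho y}R(w(y))\in L^1(\mathbb{R})$; since $e^{\rho x}\mathbf{E}_0[\mathbf{1}_{\{\tau_x>T\}}R(w(x-\xi_T))]\le \mathbf{E}_0\big[e^{\rho\xi_T}e^{\rho(x-\xi_T)}R(w(x-\xi_T))\big]$ has integral at most $m^{-1}\|e^{\rho\cdot}R(w(\cdot))\|_{L^1}$ and is continuous (with a monotone majorant), it is directly Riemann integrable, as are the remaining terms of $\widetilde S$ (continuous and bounded by $e^{(\rho-\rho_1)x}$, using $0\le W\le1$). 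Hence $\widetilde S$ is directly Riemann integrable, and the key renewal theorem for spread‑out laws with finite positive mean yields
\[
\lim_{x\to\infty}e^{\rho x}\,\mathbb{P}(M\ge x)=\lim_{x\to\infty}W(x)=\frac{1}{\mu_\kappa}\int_{\mathbb{R}}\widetilde S(y)\,dy=:C_4(\alpha)\in[0,\infty).
\]

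The main obstacle is positivity, $C_4(\alpha)>0$, i.e. that $\mathbb{P}(M\ge x)$ does not decay strictly faster than $e^{-\rho x}$. Under the stronger condition $\sum_k k^3p_k<\infty$ one has $R(u)=O(u^2)$, so the source has a large integrability margin and positivity follows as in \cite{profeta:hal-03736526}; under only $L\log L$ the source is merely integrable, and a genuinely more delicate lower bound is required. Here I intend to exploit the pioneer structure: $(Y_y)_{y\ge0}$ is subcritical with $\mathbb{E}[Y_y]=e^{-\rho y}$, and $e^{\rho x}\mathbb{P}(Y_x\ge1)$ converges to a positive limit precisely when the branching mechanism of $(Y_y)$ satisfies an $x\log x$ moment condition; the remaining point is to check that this is implied by $\sum_k k(\log k)p_k<\infty$, which should follow by bounding the number of pioneers spawned by a single pioneer in terms of the total progeny of the subtree it roots (the total progeny of a subcritical Galton–Watson tree has an $L\log L$ moment as soon as the offspring law does). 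This is the step I expect to demand the most care. Finally, the case $\alpha=1$, $\eta<0$ is degenerate: the motion is the deterministic drift $C_1(1)t$, so all particles alive at time $t$ sit at $C_1(1)t$ and $M=C_1(1)\,\zeta$ with $\zeta$ the extinction time, whence the statement reduces to the classical $e^{-(1-m)t/C_1(1)}$‑tail of the extinction time of a subcritical Markov branching process, valid under $L\log L$.
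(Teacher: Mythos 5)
Your route is genuinely different from the paper's, and it is incomplete at the decisive step. The paper works from the multiplicative Feynman--Kac representation \eqref{FK-formula-sub}: the crude bound $u(x)\leq e^{-a_0(x-y)}u(y)$ from \eqref{Laplace-of-stopping-time} shows $e^{a_0x}u(x)$ is \emph{decreasing}, so the limit exists in $[0,1]$ with no renewal theory at all; the entire content of the proof is then positivity, obtained by telescoping the representation over integer levels, $e^{a_0(n+1)}u(n+1)\geq e^{a_0n}u(n)\exp\{-Cf_{sub}(e^{-a_0n})\}$, and invoking $\sum_n f_{sub}(e^{-a_0n})<\infty$ under the $L\log L$ condition --- which is exactly your observation that $\int_0^1 R(u)u^{-2}\,\mathrm{d}u<\infty$ is equivalent to $\sum_k k(\log k)p_k<\infty$. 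So you have correctly identified where and how the hypothesis enters, but you deploy it only to control the source of your renewal equation, which at best yields $C_4(\alpha)\in[0,\infty)$.

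The gap is that positivity is never proved: your own text says you ``intend to exploit the pioneer structure'' and that the reduction of the $x\log x$ condition for the level-indexed branching process $(Y_y)$ to $\sum_k k(\log k)p_k<\infty$ (via $L\log L$ moments of the total progeny of a subcritical Galton--Watson tree) ``is the step I expect to demand the most care.'' That step is the theorem; without it you have only an upper-bound statement. Establishing that $(Y_y)_{y\geq0}$ is a continuous-parameter Markov branching process, identifying its branching mechanism, and transferring the moment condition is a substantial programme, not a routine verification. Two further points in the part you did write out need attention: (a) the key renewal theorem you invoke is for a probability law $\kappa$ supported on all of $\RR$ (since $\xi_T$ takes negative values), so you need the two-sided/random-walk version with positive drift, together with uniqueness of bounded solutions of $W=\widetilde S+W*\kappa$ to identify the renewal-theoretic solution with your $W$; and (b) direct Riemann integrability of $x\mapsto e^{\rho x}\mathbf{E}_0[\mathbf{1}_{\{\tau_x>T\}}R(w(x-\xi_T))]$ does not follow merely from $e^{\rho\cdot}R(w(\cdot))\in L^1$ plus continuity --- a convolution of an $L^1$ function with a finite measure need not be directly Riemann integrable, so the ``monotone majorant'' must actually be exhibited. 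If you want to keep the renewal framework, the quickest repair for positivity is to import the paper's telescoping argument: it needs only \eqref{FK-formula-sub}, \eqref{Laplace-of-stopping-time}, the monotonicity of $G(u)/u$, and your integrability criterion, and it would also let you discard the renewal machinery entirely.
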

	
\begin{remark}
	Our proof of  Theorem \ref{t3} can be easily adapted to a widely class of subcritical branching spectrally negative L\'{e}vy processes.
\end{remark}

 \section{An integral equation for $\mathbb{P}\left(M\geq x\right)$}

Let
\begin{align}
	u(x):=\mathbb{P}\left(M \geq x\right)\quad \mbox{and}\quad S_t:=\sup _{s \in[0, t]} \xi_s
\end{align}
be the tail probability of $M$ and the supremum of $\xi$ up to time $t$. Let $\mathbf{e}$ be an exponential random variable with  parameter $1$  independent of $\xi$.
Define
\begin{align}\label{Def-of-G}
	G(x):=
	\sum_{k=0}^\infty p_k(1-x)^k - 1+mx,\quad x\in [0,1].
\end{align}
It is easy to see that $G$ is a
 non-negative function in $[0,1]$.

		\begin{proposition}\label{Proposition-inte}
			The function $u$ is a solution of the integral
			equation:
			\begin{equation}\label{integral equation}
				u(x)=\mathbf{P}_0\left(S_{\mathbf{e}} \geq x\right)+\mathbf{E}_0\left[1_{\left\{S_{\mathbf{e}}<x\right\}} u\left(x-\xi_{\mathbf{e}}\right)\right]-\Phi_0(x)-\Phi_R(x),
			\end{equation}
			where
			\begin{equation}\label{main}
				\Phi_0(x)=(1-m)\mathbf{E}_0\left[1_{\left\{S_{\mathbf{e}}<x\right\}} u\left(x-\xi_{\mathbf{e}}\right)\right]\quad \mbox{and}\quad 	\Phi_R(x)=\mathbf{E}_0\left[1_{\left\{S_{\mathbf{e}}<x\right\}} G(u\left(x-\xi_{\mathbf{e}}\right))\right].
			\end{equation}

(i) If $m=1$ and {\bf(H)} holds,
	then for any $\varepsilon>0$, there exists $\delta>0$ such that
						\begin{equation}\label{remainder1.1}
				\Phi_R(x) \geq  (1-\varepsilon)C_2(\gamma)\mathbf{E}_0\left[1_{\left\{S_{\mathbf{e}}<x\right\}} u^{\gamma}\left(x-\xi_{\mathbf{e}}\right)\right]-\frac{ (1-\varepsilon)C_2(\gamma)}{\delta}\mathbf{E}_0\left[1_{\left\{S_{\mathbf{e}}<x\right\}} u^{\gamma+1}\left(x-\xi_{\mathbf{e}}\right)\right]
			\end{equation}
					and
			\begin{equation}\label{remainder1.2}
				\Phi_R(x)\leq (1+\varepsilon)C_2(\gamma)\mathbf{E}_0\left[1_{\left\{S_{\mathbf{e}}<x\right\}} u^{\gamma}\left(x-\xi_{\mathbf{e}}\right)\right]+\frac{1}{\delta^{\gamma+1}}\mathbf{E}_0\left[1_{\left\{S_{\mathbf{e}}<x\right\}} u^{\gamma+1}\left(x-\xi_{\mathbf{e}}\right)\right],
			\end{equation}
			where $C_2(\gamma)$ is defined in \eqref{Def-of-Lambda-gamma}.
			
      (ii)   If $m<1$ and $c_+>0$, then
			\begin{equation}\label{remainder2}
				\lim_{x\to\infty}\frac{\Phi_R(x)}{\Phi_0(x)}=0.
			\end{equation}
		\end{proposition}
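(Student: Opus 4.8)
The plan is to first derive the integral equation \eqref{integral equation} by a one-step (first-branching-event) decomposition, then verify the two estimates on the remainder term $\Phi_R$ by purely analytic manipulations of the defining power series $G$ together with the fact that $u(x)\to 0$ as $x\to\infty$.

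For the integral equation, I would condition on the first branching event, which occurs at the independent exponential time $\mathbf{e}$. Before time $\mathbf{e}$ the initial particle follows $\xi$, so the event $\{M\geq x\}$ splits according to whether the running supremum $S_{\mathbf{e}}$ of $\xi$ up to the branching time already reaches $x$ (contributing $\mathbf{P}_0(S_{\mathbf{e}}\geq x)$) or not. On the complementary event $\{S_{\mathbf{e}}<x\}$, the particle sits at $\xi_{\mathbf{e}}$ and splits into $k$ children with probability $p_k$; each child independently starts a fresh copy of the process from $\xi_{\mathbf{e}}$, and $M\geq x$ for the whole system iff at least one subtree exceeds $x$, i.e. iff at least one child's maximum is $\geq x-\xi_{\mathbf{e}}$. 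Writing $v:=u(x-\xi_{\mathbf{e}})=\mathbb{P}(M\geq x-\xi_{\mathbf{e}})$ and using independence of the subtrees, the probability that at least one of $k$ subtrees reaches the level is $1-(1-v)^k$, so by the Markov/branching property,
\begin{align}
u(x)=\mathbf{P}_0(S_{\mathbf{e}}\geq x)+\mathbf{E}_0\Big[1_{\{S_{\mathbf{e}}<x\}}\sum_{k=0}^\infty p_k\big(1-(1-v)^k\big)\Big].
\end{align}
Now $\sum_k p_k(1-(1-v)^k)=1-\sum_k p_k(1-v)^k = mv - G(v)$ by the definition \eqref{Def-of-G} of $G$. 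Substituting and splitting $mv = v-(1-m)v$ gives exactly \eqref{integral equation} with $\Phi_0,\Phi_R$ as in \eqref{main}. Nonnegativity of $G$ on $[0,1]$ follows from convexity of $x\mapsto\sum_k p_k(1-x)^k$ and matching value and derivative at $x=0$; this is already asserted in the text.

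For part (i), the task is to estimate $G(v)$ for small $v$ under {\bf(H)}. Write $G(v)=\sum_{k}p_k\big((1-v)^k-1+kv\big)=\sum_{k\geq 2}p_k\,r_k(v)$ with $r_k(v):=(1-v)^k-1+kv\geq 0$. One shows that as $v\downarrow 0$, $\sum_{k\geq2}p_k r_k(v)\sim C_2(\gamma)\,v^\gamma$: when $\gamma=2$ this is the classical Taylor expansion $r_k(v)=\binom{k}{2}v^2+o_k(v^2)$ together with $\sum_k\binom{k}{2}p_k=\tfrac12\sigma^2$ (dominated convergence, using $\sum k^2p_k<\infty$); when $\gamma\in(1,2)$ one uses the tail condition $n^\gamma\sum_{k\geq n}p_k\to\kappa_\gamma$, the representation $r_k(v)=\int_0^v (k-k(1-s)^{k-1})\,ds$, an Abel summation/Karamata-type argument to convert the tail behaviour of $p_k$ into the behaviour of $\sum_k p_k r_k(v)$, and the identity $\int_0^\infty t^{-\gamma}(1-e^{-t})\,dt=\Gamma(2-\gamma)/(\gamma-1)$ to pin down the constant. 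Given the asymptotic $G(v)=C_2(\gamma)v^\gamma(1+o(1))$, for any $\varepsilon>0$ pick $\delta>0$ so that $(1-\varepsilon)C_2(\gamma)v^\gamma\big(1-v/\delta\big)\le G(v)\le (1+\varepsilon)C_2(\gamma)v^\gamma + \delta^{-(\gamma+1)}v^{\gamma+1}$ for all $v\in[0,1]$ — the lower bound being trivial for $v\geq\delta$ since $G\geq0$ — and then take $\mathbf{E}_0[1_{\{S_{\mathbf{e}}<x\}}(\cdot)]$ with $v=u(x-\xi_{\mathbf{e}})\in[0,1]$ to get \eqref{remainder1.1}–\eqref{remainder1.2}.

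For part (ii), with $m<1$ and $c_+>0$ we already know from Theorem \ref{t1}(i) (or from an a priori bound) that $u(x)\asymp x^{-\alpha}\to 0$. Since $G(v)=o(v)$ as $v\downarrow 0$ (indeed $G(v)/v\to 0$ because $G(0)=G'(0)=0$), for any $\eta>0$ there is $\delta>0$ with $G(v)\le \eta v$ for $v\le\delta$; on $\{u(x-\xi_{\mathbf{e}})>\delta\}$ we bound $G$ by its maximum on $[0,1]$, but this event forces $x-\xi_{\mathbf{e}}$ to be bounded above by some constant $x_\delta$, i.e. $\xi_{\mathbf{e}}\ge x-x_\delta$, whose probability on $\{S_{\mathbf{e}}<x\}$ is negligible compared with $\Phi_0(x)\asymp (1-m)\mathbf{E}_0[1_{\{S_{\mathbf{e}}<x\}}u(x-\xi_{\mathbf{e}})]$. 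Putting these together yields $\Phi_R(x)\le \eta\,\Phi_0(x)/(1-m)+o(\Phi_0(x))$, and since $\eta$ is arbitrary, $\Phi_R(x)/\Phi_0(x)\to0$, which is \eqref{remainder2}. The main obstacle is the $\gamma\in(1,2)$ case of part (i): extracting the precise constant $\Gamma(2-\gamma)/(\gamma-1)\cdot\kappa_\gamma$ from the domain-of-attraction hypothesis requires a careful regular-variation / Tauberian computation rather than a one-line Taylor expansion, and one must be attentive that the error terms are uniform enough in $v$ to survive integration against the law of $\xi_{\mathbf{e}}$.
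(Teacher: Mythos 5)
Your derivation of \eqref{integral equation} and your treatment of part (i) follow essentially the paper's route: the same decomposition at the first branching time (the paper phrases it for $1-u$, you for $u$, which is immaterial), and the same $\delta$-truncation of the asymptotic $G(v)\sim C_2(\gamma)v^{\gamma}$, using $1_{\{v\ge\delta\}}\le v/\delta$ for the lower bound and $G\le 1\le \delta^{-(\gamma+1)}v^{\gamma+1}$ on $\{v\ge\delta\}$ for the upper bound, which reproduces \eqref{remainder1.1}--\eqref{remainder1.2} exactly. The only divergence in (i) is that the paper imports the asymptotic of $G$ from \cite[Lemma 3.1]{HJRS2025} (plus L'Hopital for $\gamma=2$), whereas you sketch a direct regular-variation computation; that is acceptable, but it would have to be written out in full.

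Part (ii) is where your write-up has a genuine gap. First, you cannot appeal to Theorem \ref{t1}(i): that theorem is proved \emph{using} Proposition \ref{Proposition-inte}(ii), so the citation is circular. What is actually needed is only the elementary one-sided bound $u(x)\ge e^{-1}\mathbf{P}_0(S_1>x)\ge \frac{c_+}{2e\alpha}x^{-\alpha}$ for large $x$ (no branching before time $1$ plus \eqref{Tail-of-xi}), which gives $\Phi_0(x)\ge c(1-m)x^{-\alpha}$ as in \eqref{de}; the matching upper bound $u(x)\lesssim x^{-\alpha}$ that you invoke is never used (only $u(x)\to 0$, which is immediate from a.s.\ extinction). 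Second, your key assertion that $\mathbf{P}_0\left(S_{\mathbf{e}}<x,\ \xi_{\mathbf{e}}\ge x-x_\delta\right)$ is ``negligible compared with $\Phi_0(x)$'' is precisely the quantitative heart of the proof, and you give no argument for it. It is true, e.g.\ because $\mathbf{P}_0(\xi_{\mathbf{e}}\ge x-x_\delta)-\mathbf{P}_0(\xi_{\mathbf{e}}\ge x)=o(x^{-\alpha})$, both tails being $\sim\frac{c_+}{\alpha}x^{-\alpha}$ once the uniform-in-$z$ bound of \cite[Lemma 2.2]{Ren_Song_Zhang_2023} is integrated against $e^{-z}\,\mathrm{d}z$ by dominated convergence; but some such estimate must be supplied. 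The paper handles this point with a proportional window $\{(1-\delta')x<\xi_{\mathbf{e}}<x\}$, bounds its probability by a constant times $\delta' x^{-\alpha}$ via the same lemma, and sends $\delta'\downarrow 0$ at the end. With these two repairs (replace the circular reference by the one-jump lower bound, and prove the window estimate), your argument for (ii) coincides in substance with the paper's.
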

		
		\begin{proof}
			Applying the Markov property at the first branching time, we get
			$$
		     \mathbb{P}\left(M<x\right)=p_0 \mathbf{P}\left(S_{\mathbf{e}}<x\right)+\sum_{n=1}^{+\infty} p_n \mathbb{P}\left(S_{\mathbf{e}}<x, \xi_{\mathbf{e}}+M^{(1)}<x, \ldots, \xi_{\mathbf{e}}+M^{(n)}<x\right),
			$$
			where $\left(M^{(n)}\right)_{n \in \mathbb{N}}$ are independent copies of $M$, which are also independent of  $\left(\xi_{\mathbf{e}}, S_{\mathbf{e}}\right)$. Hence,
			\begin{align*}
				1-u(x)&=p_0 \mathbf{P}_0\left(S_{\mathbf{e}}<x\right)+\sum_{n=1}^{+\infty} p_n \mathbf{E}_0\left[1_{\left\{S_{\mathbf{e}}<x\right\}}\left(1-u\left(x-\xi_{\mathbf{e}}\right)\right)^n\right] \\
				&=p_0 \mathbf{P}_0\left(S_{\mathbf{e}}<x\right)-m\mathbf{E}_0\left[1_{\left\{S_{\mathbf{e}}<x\right\}} u\left(x-\xi_{\mathbf{e}}\right)\right]\\
				&\quad+ \mathbf{E}_0\left[1_{\left\{S_{\mathbf{e}}<x\right\}}\left(\sum_{n=1}^{+\infty} p_n\left(1-u\left(x-\xi_{\mathbf{e}}\right)\right)^n+mu\left(x-\xi_{\mathbf{e}}\right)\right)\right].
			\end{align*}
Therefore, 		
			\begin{align*}
				u(x)=&\mathbf{P}_0\left(S_{\mathbf{e}} \geq x\right)+\mathbf{E}_0\left[1_{\left\{S_{\mathbf{e}}<x\right\}} u\left(x-\xi_{\mathbf{e}}\right)\right]-(1-m)\mathbf{E}_0\left[1_{\left\{S_{\mathbf{e}}<x\right\}} u\left(x-\xi_{\mathbf{e}}\right)\right]\\
				&-\mathbf{E}_0\left[1_{\left\{S_{\mathbf{e}}<x\right\}}\left(\sum_{n=1}^{+\infty} p_n\left(1-u\left(x-\xi_{\mathbf{e}}\right)\right)^n+mu\left(x-\xi_{\mathbf{e}}\right)-(1-p_0)\right)\right]\\
				=&\mathbf{P}_0\left(S_{\mathbf{e}} \geq x\right)+\mathbf{E}_0\left[1_{\left\{S_{\mathbf{e}}<x\right\}} u\left(x-\xi_{\mathbf{e}}\right)\right]-\Phi_0(x)-\Phi_R(x),
			\end{align*}
where $\Phi_0$ and $\Phi_R$ are given in \eqref{main}.

  We  prove (i) first.
 		When $m=1$ and {\bf(H)} holds,
				from \cite[Lemma 3.1]{HJRS2025} (for $\gamma\in (1, 2)$) and L'Hopital's rule (for $\gamma=2$), we get that 		
		\begin{align}\label{step_2}
			\lim_{u\downarrow 0}
			\frac{G(u)}{u^{\gamma}} = C_2(\gamma).
		\end{align}
		Therefore, for any $\varepsilon>0$, there exists $\delta>0$ such that for all $u\leq\delta$,
			\begin{equation}\label{generating}
				(1-\varepsilon)C_2(\gamma)\leq \frac{G(u)}{u^{\gamma}}\leq (1+\varepsilon)C_2(\gamma).
			\end{equation}
			Plugging \eqref{generating} into the definition of $\Phi_R$ in \eqref{main}, we get
			\begin{align*}
				\Phi_R(x)&\geq \mathbf{E}_0\left[1_{\left\{S_{\mathbf{e}}<x\right\}} G(u\left(x-\xi_{\mathbf{e}}\right))1_{\left\{u\left(x-\xi_{\mathbf{e}}\right)<\delta\right\}}\right]\\
				&\geq (1-\varepsilon)C_2(\gamma)\mathbf{E}_0\left[1_{\left\{S_{\mathbf{e}}<x\right\}} u^{\gamma}\left(x-\xi_{\mathbf{e}}\right)1_{\left\{u\left(x-\xi_{\mathbf{e}}\right)<\delta\right\}}\right]\\
				&\geq (1-\varepsilon)C_2(\gamma)\mathbf{E}_0\left[1_{\left\{S_{\mathbf{e}}<x\right\}} u^{\gamma}\left(x-\xi_{\mathbf{e}}\right)\right]-\frac{(1-\varepsilon)C_2(\gamma)}{\delta}\mathbf{E}_0\left[1_{\left\{S_{\mathbf{e}}<x\right\}} u^{\gamma+1}\left(x-\xi_{\mathbf{e}}\right)\right],
			\end{align*}
			where in the last inequality we used $1_{\left\{u<\delta\right\}}=1-1_{\left\{u\geq\delta\right\}}$ and $1_{\left\{u\geq\delta\right\}}\leq u/\delta$.
			Thus  \eqref{remainder1.1} is valid.
            On the other hand, since $G(x)\leq 1$ for $x\in [0,1]$, we have
			\begin{align}\label{upper-of-phi-r}
				\Phi_R(x)&= \mathbf{E}_0\left[1_{\left\{S_{\mathbf{e}}<x\right\}} G(u\left(x-\xi_{\mathbf{e}}\right))1_{\left\{u\left(x-\xi_{\mathbf{e}}\right)<\delta\right\}}\right]+\mathbf{E}_0\left[1_{\left\{S_{\mathbf{e}}<x\right\}} G(u\left(x-\xi_{\mathbf{e}}\right))1_{\left\{u\left(x-\xi_{\mathbf{e}}\right)\geq\delta\right\}}\right]\nonumber \\
				&\leq (1+\varepsilon)C_2(\gamma)\mathbf{E}_0\left[1_{\left\{S_{\mathbf{e}}<x\right\}} u^{\gamma}\left(x-\xi_{\mathbf{e}}\right)\right]+\frac{1}{\delta^{\gamma+1}}\mathbf{E}_0\left[1_{\left\{S_{\mathbf{e}}<x\right\}} u^{\gamma+1}\left(x-\xi_{\mathbf{e}}\right)\right],
			\end{align}
			which implies \eqref{remainder1.2}.

	We now consider the case $m<1$ and prove (ii) under the assumption $c_+>0$.
	Note that for any $\varepsilon>0$, there exists $\delta>0$ such that
			\[
		  G(u)\leq \varepsilon u, \qquad \mbox{ for all } u\leq \delta.
			\]
			Similar to \eqref{upper-of-phi-r}, using the fact that $G(x)\leq 1$, we get that
			\begin{align}
				 	\Phi_R(x)&= \mathbf{E}_0\left[1_{\left\{S_{\mathbf{e}}<x\right\}} G(u\left(x-\xi_{\mathbf{e}}\right))1_{\left\{u\left(x-\xi_{\mathbf{e}}\right)<\delta\right\}}\right]+\mathbf{E}_0\left[1_{\left\{S_{\mathbf{e}}<x\right\}} G(u\left(x-\xi_{\mathbf{e}}\right))1_{\left\{u\left(x-\xi_{\mathbf{e}}\right)\geq\delta\right\}}\right]\\
				 	&\leq \varepsilon \mathbf{E}_0\left[1_{\left\{S_{\mathbf{e}}<x\right\}} u\left(x-\xi_{\mathbf{e}}\right)\right]+\frac{1}{\delta^{2}}\mathbf{E}_0\left[1_{\left\{S_{\mathbf{e}}<x\right\}} u^{2}\left(x-\xi_{\mathbf{e}}\right)\right].
			\end{align}
			Therefore, to prove \eqref{remainder2}, it suffices to show that
		\begin{equation}\label{suffice}
			\lim_{x\to\infty}\frac{\mathbf{E}_0\left[1_{\left\{S_{\mathbf{e}}<x\right\}} u^{2}\left(x-\xi_{\mathbf{e}}\right)\right]}{\mathbf{E}_0\left[1_{\left\{S_{\mathbf{e}}<x\right\}} u\left(x-\xi_{\mathbf{e}}\right)\right]}
			=0.
		\end{equation}
		Considering the case where the initial particle does not split before time 1 and using \eqref{Tail-of-xi}, we get
		$$
		u(x)\geq e^{-1}\mathbf{P}_0(S_1>x)\geq e^{-1}\mathbf{P}_0(\xi_1>x)\geq \frac{c_+}{2e\alpha}x^{-\alpha}.
		$$
when $x$ is large enough. Therefore, when $x$ is large enough,
			\begin{align}\label{de}
				\mathbf{E}_0\left[1_{\left\{S_{\mathbf{e}}<x\right\}} u\left(x-\xi_{\mathbf{e}}\right)\right]
				&\geq \mathbf{E}_0\left[1_{\left\{S_{\mathbf{e}}<x\right\}} u\left(x-\xi_{\mathbf{e}}\right)1_{\left\{|\xi_{\mathbf{e}}|<\frac{1}{2}x\right\}}\right] \nonumber\geq  \frac{c_+}{2e\alpha} \mathbf{E}_0\left[1_{\left\{S_{\mathbf{e}}<x\right\}}\left(x-\xi_{\mathbf{e}}\right)^{-\alpha
				}1_{\left\{|\xi_{\mathbf{e}}|<\frac{1}{2}x\right\}}\right]\\
				&\geq \frac{c_+}{2e\alpha} \left(\frac{3}{2}x\right)^{-\alpha}\mathbf{P}_0\left(S_{\mathbf{e}}<x, |\xi_{\mathbf{e}}|<\frac{1}{2}x\right) \geq c_1x^{-\alpha},
			\end{align}
		for some positive constant $c_1$, where we used the fact that $\lim_{x\to\infty} \mathbf{P}_0\left(S_{\mathbf{e}}<x, |\xi_{\mathbf{e}}|<\frac{1}{2}x\right)=1$.
		Now we consider the numerator. Using the fact that $u\leq 1$, we get that that for $\delta^{\prime}>0$ sufficiently small,
		\begin{align}\label{upp-u-2}
			&\mathbf{E}_0\left[1_{\left\{S_{\mathbf{e}}<x\right\}} u^{2}\left(x-\xi_{\mathbf{e}}\right)\right]\nonumber\\
			&=\mathbf{E}_0\left[1_{\left\{S_{\mathbf{e}}<x\right\}} u^2\left(x-\xi_{\mathbf{e}}\right) 1_{\left\{\xi_{\mathbf{e}}<\left(1-\delta^{\prime}\right) x\right\}}\right]+\mathbf{E}_0\left[1_{\left\{S_{\mathbf{e}}<x\right\}} u^2\left(x-\xi_{\mathbf{e}}\right) 1_{\left\{\left(1-\delta^{\prime}\right) x<\xi_{\mathbf{e}}<x\right\}}\right] \nonumber\\
			&\leq u(\delta^{\prime} x)\mathbf{E}_0\left[1_{\left\{S_{\mathbf{e}}<x\right\}} u\left(x-\xi_{\mathbf{e}}\right)\right]+\mathbf{P}_0\left((1-\delta^{\prime})x<\xi_{\mathbf{e}}<x\right).
		\end{align}
	   Combining \eqref{de} and \eqref{upp-u-2},  we obtain that
		\begin{align}
			&\limsup_{x\to\infty}
			\frac{\mathbf{E}_0\left[1_{\left\{S_{\mathbf{e}}<x\right\}} u^{2}\left(x-\xi_{\mathbf{e}}\right)\right]}{\mathbf{E}_0\left[1_{\left\{S_{\mathbf{e}}<x\right\}} u\left(x-\xi_{\mathbf{e}}\right)\right]} \leq \lim_{x\to\infty} u(\delta^{\prime} x) + c_1\limsup_{x\to\infty}x^\alpha \mathbf{P}_0\left((1-\delta^{\prime})x<\xi_{\mathbf{e}}<x\right)\nonumber\\
			& = c_1\limsup_{x\to\infty}x^\alpha \int_0^\infty e^{-z}  \mathbf{P}_0\left((1-\delta^{\prime})x<\xi_{z}<x\right)\mathrm{d}z.
		\end{align}
		From \cite[Lemma 2.2]{Ren_Song_Zhang_2023}, when $x$ is large enough,
		\[
		x^\alpha \mathbf{P}_0\left((1-\delta^{\prime})x<\xi_{z}<x\right)\leq x^\alpha\mathbf{P}_0\left(|\xi_{z}| >(1-\delta^{\prime})x\right)
			\leq
		 c_2z, \quad \mbox{ for all } z>0
		\]
		for some positive constant $c_2$.
		Therefore, combining the dominated convergence theorem and \eqref{Tail-of-xi},
	   we get that
			\begin{align}
			&\limsup_{x\to\infty}
			\frac{\mathbf{E}_0\left[1_{\left\{S_{\mathbf{e}}<x\right\}} u^{2}\left(x-\xi_{\mathbf{e}}\right)\right]}{\mathbf{E}_0\left[1_{\left\{S_{\mathbf{e}}<x\right\}} u\left(x-\xi_{\mathbf{e}}\right)\right]} \leq  \frac{c_2c_+}{\alpha} \int_0^\infty ze^{-z}  \mathrm{d}z\frac{\delta'}{(1-\delta')^{1+\alpha}} \stackrel{\delta'\downarrow 0}{\longrightarrow}0,
		\end{align}
		which implies  \eqref{suffice}.		
		\end{proof}

		\section{Proof of Theorem \ref{t1}}

		\subsection{The case $0<\alpha <1$}

	Define
		\begin{equation}\label{def-eta}
			\eta_\alpha(\lambda):=\Gamma(1-\alpha) \lambda^{\alpha-1}
		\end{equation}
		and $\xi_{\mathbf{e}}^{+}=\max \left(0, \xi_{\mathbf{e}}\right)$.
			It follows from \cite[(2.1)]{profeta}  that
		\begin{equation}\label{asymptotics}
			\lim_{\lambda \downarrow 0}\frac{1-\mathbf{E}_0\left[e^{-\lambda S_{\mathbf{e}}}\right]}{\lambda\cdot \eta_\alpha(\lambda)} =\lim_{\lambda \downarrow 0} \frac{1-\mathbf{E}_0\left[e^{-\lambda \xi_{\mathbf{e}}^{+}}\right]}{\lambda\cdot \eta_\alpha(\lambda)} =
			\frac{c_+}{\alpha}.
		\end{equation}
		We will  use $\mathcal{L}[f]$ to denote the Laplace transform of a positive function $f$:
		$$
		\mathcal{L}[f](\lambda):=\int_{0}^{+\infty}e^{-\lambda x}f(x) \mathrm{d} x, \quad \lambda>0.
		$$

		The following lemma is given in \cite[Lemma 2.1]{profeta}
		\begin{lemma}\label{Important-lemma}
			Assume that $\alpha < 1$
			and that $f: [0,\infty)\to [0,\infty)$ is a positive and decreasing function.
			
			\noindent
			(i)
			For any $\lambda>0$, it holds that
			\[
			\int_0^\infty e^{-\lambda x} \mathbf{E}_0\left[1_{\{S_{\mathbf{e}}<x\}}f(x-\xi_{\mathbf{e}})\right]\mathrm{d}x \leq \mathbf{E}_0\left[e^{-\lambda S_{\mathbf{e}}}\right]\mathcal{L}[f](\lambda).
			\]
			
			\noindent
			(ii)
			For any $\lambda>0$, it holds that
			\begin{align}
				&	\int_0^\infty e^{-\lambda x} \mathbf{E}_0\left[1_{\{S_{\mathbf{e}}<x\}}f(x-L_{\mathbf{e}})\right]\mathrm{d}x\nonumber\\
				&\geq \mathbf{E}_0\left[e^{-\lambda \xi_{\mathbf{e}}^+}\right]\mathcal{L}[f](\lambda) +f(0)\frac{\mathbf{E}_0\left[e^{-\lambda S_{\mathbf{e}}}\right]-\mathbf{E}_0\left[e^{-\lambda \xi_{\mathbf{e}}^+}\right]}{\lambda} -  \mathbf{E}_0\left[1_{\left\{\xi_{\mathbf{e}}<0\right\}} \int_0^{-\xi_{\mathbf{e}}} e^{-\lambda z} f(z) \mathrm{d} z\right] .
			\end{align}
			
			\noindent
			(iii) If in addition that $\lim_{x\to\infty} f(x)=0$, then
			\begin{align}
				\lim_{\lambda\downarrow 0} \frac{1}{\eta_{\alpha}(\lambda)}  \mathbf{E}_0\left[1_{\left\{\xi_{\mathbf{e}}<0\right\}} \int_0^{-\xi_{\mathbf{e}}} e^{-\lambda z} f(z) \mathrm{d} z\right] = 0.
			\end{align}
		\end{lemma}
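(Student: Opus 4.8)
\emph{Proof strategy.} I would derive all three parts from a single reduction. On the event $\{S_{\mathbf{e}}<x\}$ one has $x-\xi_{\mathbf{e}}\geq S_{\mathbf{e}}-\xi_{\mathbf{e}}\geq 0$ (recall $S_{\mathbf{e}}\geq\xi_{\mathbf{e}}$ and $S_{\mathbf{e}}\geq 0$), so Tonelli's theorem together with the substitution $y=x-\xi_{\mathbf{e}}$ gives, for every $\lambda>0$,
\[
\int_0^\infty e^{-\lambda x}\,\mathbf{E}_0\!\left[1_{\{S_{\mathbf{e}}<x\}}f(x-\xi_{\mathbf{e}})\right]\mathrm{d}x
=\mathbf{E}_0\!\left[e^{-\lambda\xi_{\mathbf{e}}}\int_{S_{\mathbf{e}}-\xi_{\mathbf{e}}}^\infty e^{-\lambda y}f(y)\,\mathrm{d}y\right]=:I(\lambda).
\]
For (i) I would write $e^{-\lambda\xi_{\mathbf{e}}}=e^{-\lambda S_{\mathbf{e}}}e^{\lambda a}$ with $a:=S_{\mathbf{e}}-\xi_{\mathbf{e}}\geq 0$ and shift once more: since $f$ is decreasing,
\[
e^{\lambda a}\int_a^\infty e^{-\lambda y}f(y)\,\mathrm{d}y=\int_0^\infty e^{-\lambda z}f(z+a)\,\mathrm{d}z\leq\int_0^\infty e^{-\lambda z}f(z)\,\mathrm{d}z=\mathcal{L}[f](\lambda),
\]
whence $I(\lambda)\leq\mathbf{E}_0[e^{-\lambda S_{\mathbf{e}}}]\,\mathcal{L}[f](\lambda)$, which is (i).

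For (ii) I would estimate the integrand of $I(\lambda)$ \emph{pointwise}, distinguishing the sign of $\xi_{\mathbf{e}}$; one cannot split the expectation here, because $\mathbf{E}_0[e^{-\lambda\xi_{\mathbf{e}}}]=+\infty$ as soon as $c_->0$. On $\{\xi_{\mathbf{e}}\geq 0\}$, where $\xi_{\mathbf{e}}^+=\xi_{\mathbf{e}}$, write $\int_a^\infty=\mathcal{L}[f](\lambda)-\int_0^a$ and use $\int_0^a e^{-\lambda y}f(y)\,\mathrm{d}y\leq f(0)(1-e^{-\lambda a})/\lambda$; multiplying by $e^{-\lambda\xi_{\mathbf{e}}}$ and noting $e^{-\lambda\xi_{\mathbf{e}}}e^{-\lambda a}=e^{-\lambda S_{\mathbf{e}}}$ bounds the integrand from below by $e^{-\lambda\xi_{\mathbf{e}}^+}\mathcal{L}[f](\lambda)+f(0)(e^{-\lambda S_{\mathbf{e}}}-e^{-\lambda\xi_{\mathbf{e}}^+})/\lambda$. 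On $\{\xi_{\mathbf{e}}<0\}$, where $\xi_{\mathbf{e}}^+=0$, first drop the factor $e^{-\lambda\xi_{\mathbf{e}}}\geq 1$, then split $\int_0^a=\int_0^{-\xi_{\mathbf{e}}}+\int_{-\xi_{\mathbf{e}}}^a$ (valid since $a\geq -\xi_{\mathbf{e}}$) and estimate
\[
\int_{-\xi_{\mathbf{e}}}^a e^{-\lambda y}f(y)\,\mathrm{d}y\leq f(0)\int_{-\xi_{\mathbf{e}}}^a e^{-\lambda y}\,\mathrm{d}y=f(0)\,\frac{e^{\lambda\xi_{\mathbf{e}}}\big(1-e^{-\lambda S_{\mathbf{e}}}\big)}{\lambda}\leq f(0)\,\frac{1-e^{-\lambda S_{\mathbf{e}}}}{\lambda},
\]
using $e^{\lambda\xi_{\mathbf{e}}}\leq 1$ and $S_{\mathbf{e}}\geq 0$; this again bounds the integrand from below by $e^{-\lambda\xi_{\mathbf{e}}^+}\mathcal{L}[f](\lambda)+f(0)(e^{-\lambda S_{\mathbf{e}}}-e^{-\lambda\xi_{\mathbf{e}}^+})/\lambda-\int_0^{-\xi_{\mathbf{e}}}e^{-\lambda y}f(y)\,\mathrm{d}y$. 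Inserting $1_{\{\xi_{\mathbf{e}}<0\}}$ in front of the last term (it vanishes on $\{\xi_{\mathbf{e}}\geq 0\}$) and taking $\mathbf{E}_0$ yields the lower bound in (ii).

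For (iii), Tonelli rewrites the expression as $\int_0^\infty e^{-\lambda z}f(z)\,\mathbf{P}_0(-\xi_{\mathbf{e}}>z)\,\mathrm{d}z$. Given $\varepsilon>0$, choose $A$ with $f(z)\leq\varepsilon$ for $z\geq A$. On $[0,A]$ bound $f\leq f(0)$ and $\mathbf{P}_0(-\xi_{\mathbf{e}}>z)\leq 1$, contributing at most $Af(0)$; on $[A,\infty)$ bound $f\leq\varepsilon$ and, for $z$ large, $\mathbf{P}_0(-\xi_{\mathbf{e}}>z)\leq\mathbf{P}_0(|\xi_{\mathbf{e}}|>z)\leq Cz^{-\alpha}$ — which follows from \eqref{Tail-of-xi} applied to the $\alpha$-stable process $-\xi$, or from \cite[Lemma 2.2]{Ren_Song_Zhang_2023} after integrating out $\mathbf{e}$ — so this part is at most $\varepsilon C\int_0^\infty e^{-\lambda z}z^{-\alpha}\,\mathrm{d}z=\varepsilon C\,\Gamma(1-\alpha)\lambda^{\alpha-1}=\varepsilon C\,\eta_\alpha(\lambda)$. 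Here $\alpha<1$ is used both to make this integral finite and to force $\eta_\alpha(\lambda)\to\infty$ as $\lambda\downarrow 0$. Dividing by $\eta_\alpha(\lambda)$, the term $Af(0)/\eta_\alpha(\lambda)$ tends to $0$, so $\limsup_{\lambda\downarrow 0}\eta_\alpha(\lambda)^{-1}\mathbf{E}_0[1_{\{\xi_{\mathbf{e}}<0\}}\int_0^{-\xi_{\mathbf{e}}}e^{-\lambda z}f(z)\,\mathrm{d}z]\leq\varepsilon C$; letting $\varepsilon\downarrow 0$ gives (iii).

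The main obstacle is the pointwise bookkeeping in (ii): because the two ``principal'' terms are individually infinite when $\xi$ has negative jumps, one must argue inside the expectation, and the elementary inequality $e^{\lambda\xi_{\mathbf{e}}}(1-e^{-\lambda S_{\mathbf{e}}})\leq 1-e^{-\lambda S_{\mathbf{e}}}$ on $\{\xi_{\mathbf{e}}<0\}$ is precisely what makes the $S_{\mathbf{e}}$-term emerge with the correct sign. In (iii) the point is to keep the damping factor $e^{-\lambda z}$ rather than crudely bounding it by $1$ (as $\xi_{\mathbf{e}}$ is not integrable when $\alpha<1$); pairing $e^{-\lambda z}$ with the polynomial tail $z^{-\alpha}$ reproduces exactly the normalization $\eta_\alpha(\lambda)$.
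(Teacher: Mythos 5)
Your proof is correct and complete; note that the paper does not prove this lemma itself but quotes it from \cite[Lemma 2.1]{profeta}, and your argument (Tonelli plus the substitution $y=x-\xi_{\mathbf{e}}$, the pointwise case analysis on the sign of $\xi_{\mathbf{e}}$ so as to avoid the non-integrable $e^{-\lambda\xi_{\mathbf{e}}}$, and the tail bound $\mathbf{P}_0(-\xi_{\mathbf{e}}>z)\leq Cz^{-\alpha}$ obtained from \cite[Lemma 2.2]{Ren_Song_Zhang_2023} after integrating out $\mathbf{e}$) is exactly the standard route behind that citation. One cosmetic point in (iii): the bound $\mathbf{P}_0(-\xi_{\mathbf{e}}>z)\leq Cz^{-\alpha}$ is only available for $z$ beyond some $z_0$, so simply enlarge $A$ to $\max(A,z_0)$ (and observe the whole term vanishes trivially when $c_-=0$, since then $\xi_{\mathbf{e}}\geq 0$ a.s.); this does not affect the argument.
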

		The next lemma can be found in \cite[Lemma 2.2]{profeta}.

		\begin{lemma}\label{l3}
	For any $\lambda>0$, it holds that
			$$
			\frac{\lambda}{1-\mathbf{E}_0\left[e^{-\lambda S_{\mathbf{e}}}\right]} \mathcal{L}\left[\Phi_0+\Phi_R\right](\lambda) \leq 1
			$$
			and
			$$
			\frac{\lambda}{1-\mathbf{E}_0\left[e^{-\lambda \xi_{\mathbf{e}}^{+}}\right]} \mathcal{L}\left[\Phi_0+\Phi_R\right](\lambda) \geq 1-\lambda \mathcal{L}[u](\lambda)-\frac{\lambda}{1-\mathbf{E}_0\left[e^{-\lambda \xi_{\mathbf{e}}^{+}}\right]} \mathbf{E}_0\left[1_{\left\{\xi_{\mathbf{e}}<0\right\}} \int_0^{-\xi_{\mathbf{e}}} e^{-\lambda z} u(z) \mathrm{d} z\right] .
			$$
		\end{lemma}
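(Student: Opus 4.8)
The plan is to take the Laplace transform of the integral equation \eqref{integral equation} and then squeeze the convolution‑type term $\mathbf{E}_0[1_{\{S_{\mathbf{e}}<\cdot\}}u(\cdot-\xi_{\mathbf{e}})]$ between the upper bound of Lemma \ref{Important-lemma}(i) and the lower bound of Lemma \ref{Important-lemma}(ii), applied with $f=u$. To justify this, first note that $u$ is positive (indeed $u(x)\geq e^{-1}\mathbf{P}_0(S_1\geq x)>0$), non‑increasing, bounded by $1$, and $\lim_{x\to\infty}u(x)=0$ since $M<\infty$ $\mathbb{P}$‑a.s.\ in the (sub)critical regime; hence $\mathcal{L}[u](\lambda)\leq 1/\lambda<\infty$, and because $c_+>0$ both $1-\mathbf{E}_0[e^{-\lambda S_{\mathbf{e}}}]$ and $1-\mathbf{E}_0[e^{-\lambda\xi_{\mathbf{e}}^+}]$ are strictly positive, so the two normalizing factors are well defined. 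Also $\Phi_0,\Phi_R\geq 0$ and, by \eqref{integral equation}, $\Phi_0+\Phi_R\leq\mathbf{P}_0(S_{\mathbf{e}}\geq x)+1$ is bounded, so $\mathcal{L}[\Phi_0+\Phi_R](\lambda)<\infty$.

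Next I would compute the Laplace transform of the first term by Fubini:
\[
\mathcal{L}\big[\mathbf{P}_0(S_{\mathbf{e}}\geq\cdot)\big](\lambda)=\mathbf{E}_0\Big[\int_0^{S_{\mathbf{e}}}e^{-\lambda x}\,\mathrm{d}x\Big]=\frac{1-\mathbf{E}_0[e^{-\lambda S_{\mathbf{e}}}]}{\lambda}.
\]
Applying $\mathcal{L}[\,\cdot\,](\lambda)$ to \eqref{integral equation} then yields the identity
\[
\mathcal{L}[u](\lambda)=\frac{1-\mathbf{E}_0[e^{-\lambda S_{\mathbf{e}}}]}{\lambda}+\mathcal{L}\big[\mathbf{E}_0[1_{\{S_{\mathbf{e}}<\cdot\}}u(\cdot-\xi_{\mathbf{e}})]\big](\lambda)-\mathcal{L}[\Phi_0+\Phi_R](\lambda).
\]
For the first inequality, substitute the bound of Lemma \ref{Important-lemma}(i), namely $\mathcal{L}[\mathbf{E}_0[1_{\{S_{\mathbf{e}}<\cdot\}}u(\cdot-\xi_{\mathbf{e}})]](\lambda)\leq\mathbf{E}_0[e^{-\lambda S_{\mathbf{e}}}]\mathcal{L}[u](\lambda)$, and solve for $\mathcal{L}[\Phi_0+\Phi_R](\lambda)$; after collecting terms this gives $\mathcal{L}[\Phi_0+\Phi_R](\lambda)\leq\frac{1-\mathbf{E}_0[e^{-\lambda S_{\mathbf{e}}}]}{\lambda}\big(1-\lambda\mathcal{L}[u](\lambda)\big)$, and multiplying by $\lambda/(1-\mathbf{E}_0[e^{-\lambda S_{\mathbf{e}}}])$ together with $\lambda\mathcal{L}[u](\lambda)\geq 0$ produces the claimed bound $\leq 1$.

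For the second inequality, substitute instead the lower bound of Lemma \ref{Important-lemma}(ii) into the same identity. Here I would use $u(0)=\mathbb{P}(M\geq 0)=1$, so that the constant term $u(0)\big(\mathbf{E}_0[e^{-\lambda S_{\mathbf{e}}}]-\mathbf{E}_0[e^{-\lambda\xi_{\mathbf{e}}^+}]\big)/\lambda$ combines with $\big(1-\mathbf{E}_0[e^{-\lambda S_{\mathbf{e}}}]\big)/\lambda$ to give $\big(1-\mathbf{E}_0[e^{-\lambda\xi_{\mathbf{e}}^+}]\big)/\lambda$; collecting the remaining terms yields
\[
\mathcal{L}[\Phi_0+\Phi_R](\lambda)\geq\frac{1-\mathbf{E}_0[e^{-\lambda\xi_{\mathbf{e}}^+}]}{\lambda}\big(1-\lambda\mathcal{L}[u](\lambda)\big)-\mathbf{E}_0\Big[1_{\{\xi_{\mathbf{e}}<0\}}\int_0^{-\xi_{\mathbf{e}}}e^{-\lambda z}u(z)\,\mathrm{d}z\Big],
\]
and dividing by $\big(1-\mathbf{E}_0[e^{-\lambda\xi_{\mathbf{e}}^+}]\big)/\lambda$ gives exactly the second displayed inequality of the lemma.

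I do not expect a genuine obstacle here: the lemma is essentially a reformulation of the integral equation after a Laplace transform. The only points needing care are checking that $f=u$ satisfies the monotonicity and decay hypotheses of Lemma \ref{Important-lemma}, the bookkeeping identity flowing from $u(0)=1$, and the measurability/Fubini justification for $\mathcal{L}[\mathbf{P}_0(S_{\mathbf{e}}\geq\cdot)]$ and for interchanging $\mathcal{L}$ with the expectations defining $\Phi_0$, $\Phi_R$; all of these are routine given that the integrands are non‑negative and bounded.
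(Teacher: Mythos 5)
Your proposal is correct and is essentially the argument behind the cited result (\cite[Lemma 2.2]{profeta}, which the paper invokes without reproducing a proof): Laplace-transform the integral equation \eqref{integral equation}, note $\mathcal{L}[\mathbf{P}_0(S_{\mathbf{e}}\geq\cdot)](\lambda)=(1-\mathbf{E}_0[e^{-\lambda S_{\mathbf{e}}}])/\lambda$, and sandwich the convolution term via Lemma \ref{Important-lemma}(i)--(ii) with $f=u$, using $u(0)=1$, monotonicity of $u$, and finiteness of all transforms to rearrange. The bookkeeping in both directions is exactly as you wrote it, so there is nothing to add.
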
		
\bigskip

		\begin{proof}[Proof of Theorem \ref{t1} for $\alpha < 1$]
		Recall that $u(x)=\mathbb{P}\left(M \geq x\right)$.  Using a change of variables and the monotone convergence theorem, we get
		$$
		\lambda \mathcal{L}[u](\lambda)=\int_0^{+\infty} e^{-z} u\left(\frac{z}{\lambda}\right) \mathrm{d} z \underset{\lambda \downarrow 0}{\longrightarrow} 0 .
		$$
     Combining  \eqref{asymptotics} and Lemma \ref{Important-lemma} (iii) with $f=u$ and  Lemma \ref{l3}, we get
		\begin{equation}\label{upper bdd}
			\lim_{\lambda \downarrow 0}\frac{\mathcal{L}\left[\Phi_0+\Phi_R\right](\lambda)}{\eta_\alpha(\lambda)} =
			\frac{c_+}{\alpha}.
		\end{equation}

		We first consider the case $m=1$. In this case, $\Phi_0(x)=0$.
		Combining	\eqref{remainder1.2} and Lemma \ref{Important-lemma} (i) with $f=u^{\gamma}$ and $f=u^{\gamma+1}$, we get
		\begin{equation}\label{PR}
			\mathcal{L}\left[\Phi_R\right](\lambda) \leq
			C_2(\gamma)
			(1+\varepsilon)\mathcal{L}\left[u^{\gamma}\right](\lambda)+\frac{1}{\delta^{\gamma+1}} \mathcal{L}\left[u^{\gamma+1}\right](\lambda).
		\end{equation}
		Since $\lim _{x \rightarrow+\infty} u(x)=0$, for any
		$\varepsilon_1>0$,  there exists $A_1>0$ such that $u(x) \leq \varepsilon_1$ for $x \geq A_1$.
		Hence,
			\begin{align}\label{u^{gamma+1}}
				\mathcal{L}\left[u^{\gamma+1}\right](\lambda)&=\int_{0}^{A_1} e^{-\lambda x}u^{\gamma+1}(x)\mathrm{d} x+\int_{A_1}^{+\infty} e^{-\lambda x}u^{\gamma+1}(x)\mathrm{d} x\nonumber \\
				&\leq A_1
				+\varepsilon_1\int_{A_1}^{+\infty}e^{-\lambda x}u^{\gamma}(x)\mathrm{d} x\leq A_{1}+\varepsilon_1 \mathcal{L}\left[u^{\gamma}\right](\lambda),
			\end{align}
		where in the first inequality we used the fact that  $u \leq 1$.
		 Thus,
		 combining
		  \eqref{upper bdd}, \eqref{PR} and \eqref{u^{gamma+1}}, we have
		\begin{align*}
			\frac{c_+}{\alpha}=\lim_{\lambda \downarrow 0}\frac{\mathcal{L}\left[\Phi_R\right](\lambda)}{\eta_\alpha(\lambda)}
			\leq & \liminf _{\lambda \downarrow 0}
			\frac{\left(C_2(\gamma)(1+\varepsilon)+\varepsilon_1
				/\delta^{\gamma+1}\right)\mathcal{L}\left[u^{\gamma}\right](\lambda)+A_1
				/\delta^{\gamma+1}}{\eta_\alpha(\lambda)}\\
			=& \liminf _{\lambda \downarrow 0}
			\frac{\left(C_2(\gamma)(1+\varepsilon)+\varepsilon_1
				/\delta^{\gamma+1}\right)\mathcal{L}\left[u^{\gamma}\right](\lambda)}{\eta_\alpha(\lambda)}.
		\end{align*}
		Letting $\varepsilon_1\to 0$ first and then $\varepsilon\to 0$, we get that
		\begin{align} \label{proof-liminf}
	\frac{c_+}{\alpha}\leq \liminf _{\lambda \downarrow 0} \frac{C_2(\gamma)\mathcal{L}\left[u^{\gamma}\right](\lambda)}{\eta_\alpha(\lambda)}.
		\end{align}

		On the other hand, combining \eqref{remainder1.1}, \eqref{u^{gamma+1}},
Lemma \ref{Important-lemma} (ii) with $f=u^{\gamma}$ and Lemma \ref{Important-lemma} (i) with $f=u^{\gamma+1}$, we see that
			\begin{align}\label{3}
				& \mathcal{L}\left[\Phi_R\right](\lambda) \nonumber\\
				&\geq
				C_2(\gamma)(1-\varepsilon)\left( \mathbf{E}_0\left[e^{-\lambda \xi_{\mathbf{e}}^{+}}\right] \mathcal{L}[u^{\gamma}](\lambda)+ \frac{\mathbf{E}_0\left[e^{-\lambda S_{\mathbf{e}}}\right]-\mathbf{E}_0\left[e^{-\lambda \xi_{\mathbf{e}}^{+}}\right]}{\lambda}-\mathbf{E}_0\left[1_{\left\{\xi_{\mathbf{e}}<0\right\}} \int_0^{-\xi_{\mathbf{e}}} e^{-\lambda z} u^{\gamma}(z) \mathrm{d} z\right]\right)\nonumber\\
				&-\frac{C_2(\gamma)(1-\varepsilon)}{\delta}\left( A_1+\varepsilon_1 \mathcal{L}\left[u^{\gamma}\right](\lambda)\right).
			\end{align}
		Dividing both sides by $\eta_\alpha(\lambda)$ and
          using Lemma \ref{Important-lemma} (iii) with $f=u^{\gamma}$, we obtain
		$$
		\frac{c_+}{\alpha} \geq \limsup _{\lambda \downarrow 0} \frac{C_2(\gamma)(1-\varepsilon)\left(1-\varepsilon_1
			/\delta\right)\mathcal{L}[u^{\gamma}](\lambda)}{\eta_\alpha(\lambda)}.
		$$
	Letting $\varepsilon_1\to 0$ first and then $\varepsilon\to0$, we conclude that
		\begin{align} \label{proof-limsup}
		\frac{c_+}{\alpha} \geq \limsup _{\lambda \downarrow 0} \frac{C_2(\gamma)\mathcal{L}\left[u^{\gamma}\right](\lambda)}{\eta_\alpha(\lambda)}.
		\end{align}
		Combining \eqref{proof-liminf} and \eqref{proof-limsup}, we conclude that
		$$
		\lim_{\lambda \downarrow 0}
			\frac{\mathcal{L}\left[u^{\gamma}\right](\lambda)}{\eta_\alpha(\lambda)}=\frac{c_+}{\alpha C_2(\gamma)}.
		$$
		Hence, by the Tauberian theorem, the above limit is equivalent to
		$$
		\lim_{x \to +\infty}
				\frac{1}{\eta_\alpha\left(\frac{1}{x}\right)}\int_0^{x}u^{\gamma}(z) \mathrm{d} z=	\frac{c_+}{\alpha\Gamma(2-\alpha)C_2(\gamma)}.
		$$
Applying  Karamata's
monotone density theorem \cite[Theorem 1.7.2]{bingham_goldie_teugels_1987}, we get the desired result for $m=1$.

		Now we deal with the subcritical case $m<1$. For any  $\varepsilon^{\prime}>0$,
				by \eqref{remainder2},  we see that there exists a constant $A'$ such that  for all $x\geq A'$,
		\begin{equation}\label{varepsilon_prime}
		 0\leq 	\Phi_R(x)\leq \varepsilon^{\prime}\Phi_0(x).
		\end{equation}
		Similar to \eqref{u^{gamma+1}}, using  \eqref{varepsilon_prime},  we get that for all $\lambda>0$,
		\begin{align}\label{Bounds-for-Phi-R}
			0\leq \mathcal{L}{[\Phi_R]}(\lambda) \leq A'+ \varepsilon^{\prime} \int_{A'}^\infty e^{-\lambda x} \Phi_0(x)\mathrm{d}x \leq A'+  \varepsilon^{\prime} \mathcal{L}{[\Phi_0]}(\lambda),
		\end{align}
		which together with \eqref{upper bdd} implies that
		\begin{align}
		 & 	\limsup_{\lambda\downarrow 0} \frac{\mathcal{L}[\Phi_0](\lambda)}{\eta_\alpha(\lambda)} \leq  \frac{c_+}{\alpha}\leq \liminf_{\lambda\downarrow 0} \frac{(1+\varepsilon')\mathcal{L}[\Phi_0](\lambda)+ A'}{\eta_\alpha(\lambda)} =  (1+\varepsilon')\liminf_{\lambda\downarrow 0} \frac{\mathcal{L}[\Phi_0](\lambda)}{\eta_\alpha(\lambda)} .
		\end{align}
		Letting $\varepsilon'\downarrow 0$, we get
		$$
		\lim_{\lambda \downarrow 0}
		\frac{1}{\eta_\alpha(\lambda)}\mathcal{L}[u](\lambda)=\frac{c_+}{\alpha(1-m)}.
		$$
		Hence, by the Tauberian theorem, we have
		$$
		\lim_{x \rightarrow +\infty}
		\frac{1}{\eta_\alpha\left(\frac{1}{x}\right)}		\int_0^{x}u(z)  \mathrm{d} z =\frac{c_+}{\alpha\Gamma(2-\alpha)(1-m)} .
		$$
  Applying Karamata's
monotone density theorem \cite[Theorem 1.7.2]{bingham_goldie_teugels_1987}, we get the desired result.
		\end{proof}

\subsection{Proof of Theorem \ref{t1} for $1\leq \alpha<2$}
		 It follows from  \cite[(3.3)] {profeta} that for $\alpha\in (1,2)$,
		\begin{equation}\label{a2}
			\lim_{\lambda \downarrow 0}\frac{\int_0^{+\infty} e^{-\lambda x} x \mathbf{P}_0\left(S_{\mathbf{e}} \geq x\right)  \mathrm{d} x}{\lambda^{\alpha-2}}
			=\lim_{\lambda \downarrow 0}\frac{1-\mathbf{E}_0\left[e^{-\lambda S_{\mathbf{e}}}\right]-\lambda \mathbf{E}_0\left[S_{\mathbf{e}} e^{-\lambda S_{\mathbf{e}}}\right]}{\lambda^2\cdot\lambda^{\alpha-2}} = \frac{c_+\Gamma(2-\alpha)}{\alpha}.
		\end{equation}
		For $\alpha=1$, combining \eqref{Tail-of-xi}, \cite[Lemma 2.2]{Ren_Song_Zhang_2023} and the dominated convergence theorem,  \eqref{a2} remains true for $\alpha=1$ since $\Gamma(1)=1$.  Moreover, \eqref{a2} also holds with $S_{\mathbf{e}}$ replaced by $\xi_{\mathbf{e}}^+$.
		\begin{lemma}\label{Important-lemma2}
			Assume that $\alpha\in [1,2)$
			 and that $f:[0,\infty)\to [0,\infty)$ is a positive and decreasing function.
			
			(i) We have the following upper bound
			\[
				\int_0^\infty e^{-\lambda x}x \mathbf{E}_0\left[1_{\{S_{\mathbf{e}}<x\}}f(x-\xi_{\mathbf{e}})\right]\mathrm{d}x \leq
				\mathbf{E}_0\left[e^{-\lambda S_{\mathbf{e}}}\right]\mathcal{L}[xf(x)](\lambda)+\mathbf{E}_0\left[S_{\mathbf{e}}e^{-\lambda S_{\mathbf{e}}}\right]\mathcal{L}[f](\lambda).
			\]

        	(ii) We have the following lower bound
        \begin{align}
        	&	\int_0^\infty e^{-\lambda x}x \mathbf{E}_0\left[1_{\{S_{\mathbf{e}}<x\}}f(x-\xi_{\mathbf{e}})\right]\mathrm{d}x \nonumber\\
        	&\geq f(0)\int_0^{+\infty} e^{-\lambda x} x\left(\mathbf{P}_0\left(\xi_{\mathbf{e}} \geq x\right)-\mathbf{P}_0\left(S_{\mathbf{e}} \geq x\right)\right) \mathrm{d} x + \mathbf{E}_0\left[e^{-\lambda \xi_{\mathbf{e}}}1_{\{\xi_{\mathbf{e}}\geq 0 \}}\right]\mathcal{L}[xf(x)](\lambda)\nonumber\\
        	&\qquad +\int_0^\infty e^{-\lambda x}x \mathbf{E}_0\left[1_{\{ \xi_{\mathbf{e}}<0\}}f(x-\xi_{\mathbf{e}})\right]\mathrm{d}x.
        \end{align}
		\end{lemma}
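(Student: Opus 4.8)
The plan is to follow the proof of Lemma~\ref{Important-lemma}, the only new feature being the weight $x$, which I will track by decomposing $x$ suitably: into $(x-\xi_{\mathbf{e}})+\xi_{\mathbf{e}}$ in order to produce $\mathcal{L}[xf(x)]$ and $\mathcal{L}[f]$, and, for the upper bound, into $(x-S_{\mathbf{e}})+S_{\mathbf{e}}$ after the change of variables $x=S_{\mathbf{e}}+w$. Since all integrands are non-negative, every exchange of an $x$-integral with a conditional expectation is justified by Tonelli's theorem, and all the quantities that appear below are finite because $\int_0^\infty e^{-\lambda x}x\,\mathrm{d}x=\lambda^{-2}<\infty$ and $0\leq f\leq f(0)$.

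For part (i): condition on the pair $(\xi_{\mathbf{e}},S_{\mathbf{e}})$; on $\{S_{\mathbf{e}}<x\}$ the argument satisfies $x-\xi_{\mathbf{e}}\geq x-S_{\mathbf{e}}>0$. After the substitution $x=S_{\mathbf{e}}+w$ with $w>0$, and writing $a:=S_{\mathbf{e}}-\xi_{\mathbf{e}}\geq 0$, the inner integral equals $e^{-\lambda S_{\mathbf{e}}}\int_0^\infty e^{-\lambda w}(S_{\mathbf{e}}+w)f(a+w)\,\mathrm{d}w$. Because $f$ is non-increasing and $a\geq 0$ we have $f(a+w)\leq f(w)$, so $(S_{\mathbf{e}}+w)f(a+w)\leq S_{\mathbf{e}}f(w)+wf(w)$ (using also $S_{\mathbf{e}}\geq 0$); integrating against $e^{-\lambda w}\,\mathrm{d}w$ and then taking expectations gives exactly $\mathbf{E}_0[e^{-\lambda S_{\mathbf{e}}}]\mathcal{L}[xf(x)](\lambda)+\mathbf{E}_0[S_{\mathbf{e}}e^{-\lambda S_{\mathbf{e}}}]\mathcal{L}[f](\lambda)$.

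For part (ii): since $\xi_{\mathbf{e}}\leq S_{\mathbf{e}}$ we may write $1_{\{S_{\mathbf{e}}<x\}}=1_{\{\xi_{\mathbf{e}}<x\}}-1_{\{\xi_{\mathbf{e}}<x,\,S_{\mathbf{e}}\geq x\}}$, so the quantity of interest equals $A-B$, where $A:=\int_0^\infty e^{-\lambda x}x\,\mathbf{E}_0[1_{\{\xi_{\mathbf{e}}<x\}}f(x-\xi_{\mathbf{e}})]\,\mathrm{d}x$ and $B:=\int_0^\infty e^{-\lambda x}x\,\mathbf{E}_0[1_{\{\xi_{\mathbf{e}}<x,\,S_{\mathbf{e}}\geq x\}}f(x-\xi_{\mathbf{e}})]\,\mathrm{d}x$; on both events $x-\xi_{\mathbf{e}}>0$, so no extension of $f$ past $0$ is needed, and $0\leq B\leq A\leq f(0)\lambda^{-2}<\infty$. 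To bound $A$ from below I split on the sign of $\xi_{\mathbf{e}}$: on $\{\xi_{\mathbf{e}}<0\}$ the indicator $1_{\{\xi_{\mathbf{e}}<x\}}$ equals $1$ for all $x>0$, which yields the last term $\int_0^\infty e^{-\lambda x}x\,\mathbf{E}_0[1_{\{\xi_{\mathbf{e}}<0\}}f(x-\xi_{\mathbf{e}})]\,\mathrm{d}x$; on $\{\xi_{\mathbf{e}}\geq 0\}$, conditioning on $\xi_{\mathbf{e}}=s\geq 0$ and substituting $y=x-s$ on $x\in(s,\infty)$ gives $e^{-\lambda s}\big(\mathcal{L}[xf(x)](\lambda)+s\,\mathcal{L}[f](\lambda)\big)\geq e^{-\lambda s}\mathcal{L}[xf(x)](\lambda)$, and taking expectations produces $\mathbf{E}_0[e^{-\lambda \xi_{\mathbf{e}}}1_{\{\xi_{\mathbf{e}}\geq 0\}}]\mathcal{L}[xf(x)](\lambda)$. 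To bound $B$ from above I use $f(x-\xi_{\mathbf{e}})\leq f(0)$ on the event together with the identity $\mathbf{P}_0(\xi_{\mathbf{e}}<x,\,S_{\mathbf{e}}\geq x)=\mathbf{P}_0(S_{\mathbf{e}}\geq x)-\mathbf{P}_0(\xi_{\mathbf{e}}\geq x)$ (valid because $\{\xi_{\mathbf{e}}\geq x\}\subseteq\{S_{\mathbf{e}}\geq x\}$), obtaining $B\leq f(0)\int_0^\infty e^{-\lambda x}x\,(\mathbf{P}_0(S_{\mathbf{e}}\geq x)-\mathbf{P}_0(\xi_{\mathbf{e}}\geq x))\,\mathrm{d}x$. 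Subtracting, $A-B$ is bounded below by the sum of the three stated terms.

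All the computations are elementary; the only point that needs a little care is the bookkeeping of the weight $x$, and, in part (ii), choosing the decomposition $1_{\{S_{\mathbf{e}}<x\}}=1_{\{\xi_{\mathbf{e}}<x\}}-1_{\{\xi_{\mathbf{e}}<x,\,S_{\mathbf{e}}\geq x\}}$ rather than $1-1_{\{S_{\mathbf{e}}\geq x\}}$, so that the argument of $f$ stays positive throughout and the correction term comes out precisely in the form $f(0)\int_0^\infty e^{-\lambda x}x\,(\mathbf{P}_0(\xi_{\mathbf{e}}\geq x)-\mathbf{P}_0(S_{\mathbf{e}}\geq x))\,\mathrm{d}x$. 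I do not anticipate any genuine obstacle beyond this.
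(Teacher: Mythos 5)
Your proof is correct and follows essentially the same route as the paper: for part (ii) your decomposition $1_{\{S_{\mathbf{e}}<x\}}=1_{\{\xi_{\mathbf{e}}<x\}}-1_{\{\xi_{\mathbf{e}}<x,\,S_{\mathbf{e}}\geq x\}}$ is literally the paper's step $1_{\{S_{\mathbf{e}}\geq x\}}\geq 1_{\{\xi_{\mathbf{e}}\geq x\}}$ combined with $f\leq f(0)$, followed by the same split on the sign of $\xi_{\mathbf{e}}$ and the same Fubini/change-of-variables computation. For part (i) the paper simply cites Profeta's Lemma 3.1, and your substitution $x=S_{\mathbf{e}}+w$ together with $f(a+w)\leq f(w)$ is the standard argument behind that citation, so there is nothing to flag.
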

		\begin{proof}
			For (i), see \cite[Lemma 3.1]{profeta} for the proof of $\alpha\in (1,2)$ and the proof for $\alpha	=1$ is the same.
			Now we prove (ii). Combining the inequalities $1_{\{S_{\mathbf{e}}\geq x \}} \geq 1_{\{\xi_{\mathbf{e}}\geq x \}} $  and $f(x)\leq f(0)$ for all $x\geq 0$, we have
			\begin{align}\label{eq1}
			&	\int_0^\infty e^{-\lambda x}x \mathbf{E}_0\left[1_{\{S_{\mathbf{e}}<x\}}f(x-\xi_{\mathbf{e}})\right]\mathrm{d}x \nonumber\\
			&\geq f(0)\int_0^{+\infty} e^{-\lambda x} x\left(\mathbf{P}_0\left(\xi_{\mathbf{e}} \geq x\right)-\mathbf{P}_0\left(S_{\mathbf{e}} \geq x\right)\right) \mathrm{d} x +	\int_0^\infty e^{-\lambda x}x \mathbf{E}_0\left[1_{\{\xi_{\mathbf{e}}<x\}}f(x-\xi_{\mathbf{e}})\right]\mathrm{d}x .
			\end{align}
By Fubini's theorem, we have
			\begin{align}\label{eq2}
				&	\int_0^\infty e^{-\lambda x}x \mathbf{E}_0\left[1_{\{0\leq \xi_{\mathbf{e}}<x\}}f(x-\xi_{\mathbf{e}})\right]\mathrm{d}x = \int_0^\infty e^{-\lambda x}x \int_0^x f(x-y)\mathbf{P}_0(\xi_{\mathbf{e}}\in \mathrm{d} y)\mathrm{d}x \nonumber\\
				& =  \int_0^\infty  \mathbf{P}_0(\xi_{\mathbf{e}}\in \mathrm{d} y) \int_y^\infty  e^{-\lambda x}x f(x-y)\mathrm{d}x \geq \int_0^\infty  \mathbf{P}_0(\xi_{\mathbf{e}}\in \mathrm{d} y) \int_y^\infty  e^{-\lambda x}(x-y) f(x-y)\mathrm{d}x \nonumber\\
				& =\mathbf{E}_0\left[e^{-\lambda \xi_{\mathbf{e}}}1_{\{\xi_{\mathbf{e}}\geq 0 \}}\right]\mathcal{L}[xf(x)](\lambda).
			\end{align}
			Now (ii) follows from \eqref{eq1} and \eqref{eq2}
		\end{proof}

		\begin{lemma}\label{l5}
			For any $\lambda>0$, it holds that

(i)
			$$
			\frac{\lambda^2}{1-\mathbf{E}_0\left[e^{-\lambda S_{\mathbf{e}}}\right]-\lambda \mathbf{E}_0\left[S_{\mathbf{e}} e^{-\lambda S_{\mathbf{e}}}\right]} \mathcal{L}\left[x(\Phi_0(x)+\Phi_R(x))\right](\lambda) \leq 1+\frac{\lambda^2 \mathbf{E}_0\left[S_{\mathbf{e}} e^{-\lambda S_{\mathbf{e}}}\right]}{1-\mathbf{E}_0\left[e^{-\lambda S_{\mathbf{e}}}\right]-\lambda \mathbf{E}_0\left[S_{\mathbf{e}} e^{-\lambda S_{\mathbf{e}}}\right]} \mathcal{L}[u](\lambda),
			$$

(ii)
		\begin{align}
				&\frac{\lambda^2}{1-\mathbf{E}_0\left[e^{-\lambda \xi_{\mathbf{e}}^{+}}\right]-\lambda \mathbf{E}_0\left[\xi_{\mathbf{e}}^{+} e^{-\lambda \xi_{\mathbf{e}}^{+}}\right]} \mathcal{L}\left[x(\Phi_0(x)+\Phi_R(x))\right](\lambda) \nonumber\\
			& \geq 1- \frac{\lambda^2 \mathbf{E}_0\left[ 1- e^{-\lambda \xi_{\mathbf{e}}^+}\right]\mathcal{L}[xu(x)](\lambda)}{1-\mathbf{E}_0\left[e^{-\lambda \xi_{\mathbf{e}}^{+}}\right]-\lambda \mathbf{E}_0\left[\xi_{\mathbf{e}}^{+} e^{-\lambda \xi_{\mathbf{e}}^{+}}\right]}  \nonumber\\
			& \quad - \frac{\lambda^2\left( \mathbf{P}_0 \left(\xi_{\mathbf{e}}\leq 0\right) \mathcal{L}(xu(x))(\lambda)
			-
			\int_0^\infty e^{-\lambda x}x \mathbf{E}_0\left[1_{\{ \xi_{\mathbf{e}}<0\}}u(x-\xi_{\mathbf{e}})\right]\mathrm{d}x\right)}{1-\mathbf{E}_0\left[e^{-\lambda \xi_{\mathbf{e}}^{+}}\right]-\lambda \mathbf{E}_0\left[\xi_{\mathbf{e}}^{+} e^{-\lambda \xi_{\mathbf{e}}^{+}}\right]}.
		\end{align}
		\end{lemma}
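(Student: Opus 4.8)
The plan is to substitute the integral equation \eqref{integral equation} into the Laplace transform $\mathcal{L}[x(\Phi_0(x)+\Phi_R(x))](\lambda)$ and then invoke the two one-sided estimates of Lemma \ref{Important-lemma2}. Rewriting \eqref{integral equation} as
\[
\Phi_0(x)+\Phi_R(x)=\mathbf{P}_0\left(S_{\mathbf{e}}\geq x\right)+\mathbf{E}_0\left[1_{\{S_{\mathbf{e}}<x\}}u(x-\xi_{\mathbf{e}})\right]-u(x),
\]
multiplying by $xe^{-\lambda x}$ and integrating over $x>0$, one obtains
\[
\mathcal{L}\left[x(\Phi_0(x)+\Phi_R(x))\right](\lambda)=I_1(\lambda)+I_2(\lambda)-\mathcal{L}[xu(x)](\lambda),
\]
where $I_1(\lambda):=\int_0^\infty e^{-\lambda x}x\,\mathbf{P}_0(S_{\mathbf{e}}\geq x)\,\mathrm{d}x$ and $I_2(\lambda):=\int_0^\infty e^{-\lambda x}x\,\mathbf{E}_0[1_{\{S_{\mathbf{e}}<x\}}u(x-\xi_{\mathbf{e}})]\,\mathrm{d}x$. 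Integration by parts gives, for any nonnegative random variable $Y$, the identity $\int_0^\infty e^{-\lambda x}x\,\mathbf{P}_0(Y\geq x)\,\mathrm{d}x=\lambda^{-2}\bigl(1-\mathbf{E}_0[e^{-\lambda Y}]-\lambda\mathbf{E}_0[Ye^{-\lambda Y}]\bigr)$ (the identity underlying \eqref{a2}); applied to $Y=S_{\mathbf{e}}$ it evaluates $I_1(\lambda)$, and applied to $Y=\xi_{\mathbf{e}}^+$ (together with $\mathbf{P}_0(\xi_{\mathbf{e}}\geq x)=\mathbf{P}_0(\xi_{\mathbf{e}}^+\geq x)$ for $x>0$) it evaluates $\int_0^\infty e^{-\lambda x}x\,\mathbf{P}_0(\xi_{\mathbf{e}}\geq x)\,\mathrm{d}x=\lambda^{-2}\bigl(1-\mathbf{E}_0[e^{-\lambda\xi_{\mathbf{e}}^+}]-\lambda\mathbf{E}_0[\xi_{\mathbf{e}}^+e^{-\lambda\xi_{\mathbf{e}}^+}]\bigr)$. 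Since $c_+>0$, both expressions are strictly positive for every $\lambda>0$, so the denominators in the statement are meaningful and division by them is legitimate.

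For part (i) I would bound $I_2(\lambda)$ from above using Lemma \ref{Important-lemma2}(i) with the positive decreasing function $f=u$: $I_2(\lambda)\leq\mathbf{E}_0[e^{-\lambda S_{\mathbf{e}}}]\mathcal{L}[xu(x)](\lambda)+\mathbf{E}_0[S_{\mathbf{e}}e^{-\lambda S_{\mathbf{e}}}]\mathcal{L}[u](\lambda)$. Substituting this and collecting the two terms proportional to $\mathcal{L}[xu(x)](\lambda)$, their coefficient is $\mathbf{E}_0[e^{-\lambda S_{\mathbf{e}}}]-1\le 0$, so that contribution can be discarded; one is left with $\mathcal{L}[x(\Phi_0+\Phi_R)](\lambda)\leq I_1(\lambda)+\mathbf{E}_0[S_{\mathbf{e}}e^{-\lambda S_{\mathbf{e}}}]\mathcal{L}[u](\lambda)$, and dividing through by $I_1(\lambda)=\lambda^{-2}(1-\mathbf{E}_0[e^{-\lambda S_{\mathbf{e}}}]-\lambda\mathbf{E}_0[S_{\mathbf{e}}e^{-\lambda S_{\mathbf{e}}}])$ produces exactly the inequality in (i).

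For part (ii) I would instead bound $I_2(\lambda)$ from below via Lemma \ref{Important-lemma2}(ii) with $f=u$ and $f(0)=u(0)=\mathbb{P}(M\geq 0)=1$. In that lower bound the summand $-\int_0^\infty e^{-\lambda x}x\,\mathbf{P}_0(S_{\mathbf{e}}\geq x)\,\mathrm{d}x$ cancels $I_1(\lambda)$ exactly, leaving $\int_0^\infty e^{-\lambda x}x\,\mathbf{P}_0(\xi_{\mathbf{e}}\geq x)\,\mathrm{d}x$, the term $\mathbf{E}_0[e^{-\lambda\xi_{\mathbf{e}}}1_{\{\xi_{\mathbf{e}}\geq0\}}]\mathcal{L}[xu(x)](\lambda)$, and $\int_0^\infty e^{-\lambda x}x\,\mathbf{E}_0[1_{\{\xi_{\mathbf{e}}<0\}}u(x-\xi_{\mathbf{e}})]\,\mathrm{d}x$, all minus $\mathcal{L}[xu(x)](\lambda)$. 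Using that $\xi_{\mathbf{e}}^+=0$ on $\{\xi_{\mathbf{e}}<0\}$, one checks $\mathbf{E}_0[e^{-\lambda\xi_{\mathbf{e}}}1_{\{\xi_{\mathbf{e}}\geq0\}}]-1=-\mathbf{E}_0[1-e^{-\lambda\xi_{\mathbf{e}}^+}]-\mathbf{P}_0(\xi_{\mathbf{e}}<0)$, and replacing $\mathbf{P}_0(\xi_{\mathbf{e}}<0)$ by the larger $\mathbf{P}_0(\xi_{\mathbf{e}}\leq 0)$ only weakens the lower bound. Dividing through by $\lambda^{-2}(1-\mathbf{E}_0[e^{-\lambda\xi_{\mathbf{e}}^+}]-\lambda\mathbf{E}_0[\xi_{\mathbf{e}}^+e^{-\lambda\xi_{\mathbf{e}}^+}])>0$ then yields the inequality in (ii).

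The computation is largely bookkeeping, and it is the $\alpha\in[1,2)$ counterpart of Lemma \ref{l3}. The points that need genuine care are: verifying the elementary Laplace identity $\int_0^\infty e^{-\lambda x}x\,\mathbf{P}_0(Y\geq x)\,\mathrm{d}x=\lambda^{-2}(1-\mathbf{E}_0[e^{-\lambda Y}]-\lambda\mathbf{E}_0[Ye^{-\lambda Y}])$ (the Fubini/integration-by-parts interchange is harmless because all integrands are nonnegative and, e.g., $\mathbf{E}_0[S_{\mathbf{e}}e^{-\lambda S_{\mathbf{e}}}]<\infty$ trivially since $te^{-\lambda t}$ is bounded); confirming the strict positivity of the two denominators, which is where $c_+>0$ enters; and the $\xi_{\mathbf{e}}^+$-versus-$\xi_{\mathbf{e}}$ substitutions together with the sign bookkeeping in part (ii). None of these is hard individually; the main risk is a sign slip while keeping track of which inequalities are preserved when dividing by the (positive) normalizing factors.
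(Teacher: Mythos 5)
Your proposal is correct and follows essentially the same route as the paper: part (ii) is exactly the paper's argument (plug the integral equation \eqref{integral equation} into $\mathcal{L}[x(\Phi_0+\Phi_R)]$, apply Lemma \ref{Important-lemma2}(ii) with $f=u$, and handle the $\xi_{\mathbf{e}}^{+}$ bookkeeping before dividing by the positive normalizer), while part (i), which the paper delegates to Profeta's Lemma 3.2, is the same computation via Lemma \ref{Important-lemma2}(i) with the nonpositive coefficient of $\mathcal{L}[xu(x)]$ discarded. The auxiliary facts you flag (the Laplace identity behind \eqref{a2}, $u(0)=1$, and positivity of the denominators) are all correct as stated.
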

    \begin{proof}
    	The proof of (i) for $\alpha\in (1,2)$ can be found in  \cite[Lemma 3.2]{profeta} and the case $\alpha=1$ can be treated similarly. Now we prove (ii). Combining
	\eqref{integral equation}
	and Lemma \ref{Important-lemma2} (ii) with $f=u$, we see
	that
    	\begin{align}
    		& \mathcal{L}\left[x(\Phi_0(x)+\Phi_R(x))\right](\lambda)  + \mathcal{L}(xu(x))(\lambda) \nonumber\\
    		& = \int_0^{+\infty} e^{-\lambda x} x \mathbf{P}_0\left(S_{\mathbf{e}} \geq x\right)  \mathrm{d} x+ \int_0^\infty e^{-\lambda x}x \mathbf{E}_0\left[1_{\{S_{\mathbf{e}}<x\}}u(x-\xi_{\mathbf{e}})\right]\mathrm{d}x \nonumber\\
    		& \geq \int_0^{+\infty} e^{-\lambda x} x \mathbf{P}_0\left(\xi_{\mathbf{e}} \geq x\right)  \mathrm{d} x+  \mathbf{E}_0\left[e^{-\lambda \xi_{\mathbf{e}}}1_{\{\xi_{\mathbf{e}}\geq 0 \}}\right]\mathcal{L}[xu(x)](\lambda)\nonumber\\
    		&\qquad +
    		\int_0^\infty e^{-\lambda x}x \mathbf{E}_0\left[1_{\{\xi_{\mathbf{e}}<x\}}f(x-\xi_{\mathbf{e}})\right]\mathrm{d}x .
    	\end{align}
   From the argument of \eqref{a2} and
    the fact that $\xi_{\mathbf{e}}= \xi_{\mathbf{e}}^+$ on the set $\{\xi_{\mathbf{e}}\geq x\}$ for $x\geq 0$, we get the lower bound
    	\begin{align}
    	&	\mathcal{L}\left[x(\Phi_0(x)+\Phi_R(x))\right](\lambda)  + \mathcal{L}(xu(x))(\lambda)\nonumber\\
    	&\geq \int_0^{+\infty} e^{-\lambda x} x \mathbf{P}_0\left(\xi_{\mathbf{e}}^+  \geq x\right)  \mathrm{d} x-\mathbf{E}_0\left[(1-e^{-\lambda\xi_{\mathbf{e}}^+})1_{\left\{\xi_{\mathbf{e}}\geq 0\right\}}\right]\mathcal{L}(xu(x))(\lambda) \nonumber\\
    	&\quad - \left( \mathbf{P}_0 \left(\xi_{\mathbf{e}}\leq 0\right) \mathcal{L}(xu(x))(\lambda)
    	-   \int_0^\infty e^{-\lambda x}x \mathbf{E}_0\left[1_{\{ \xi_{\mathbf{e}}<0\}}u(x-\xi_{\mathbf{e}})\right]\mathrm{d}x\right)\nonumber\\
    	& = \frac{1-\mathbf{E}_0\left[e^{-\lambda \xi_{\mathbf{e}}^{+}}\right]-\lambda \mathbf{E}_0\left[\xi_{\mathbf{e}}^{+} e^{-\lambda \xi_{\mathbf{e}}^{+}}\right]}{\lambda^2} -\mathbf{E}_0\left[1-e^{-\lambda\xi_{\mathbf{e}}^+}\right]\mathcal{L}(xu(x))(\lambda) \nonumber\\
    	&\quad - \left( \mathbf{P}_0 \left(\xi_{\mathbf{e}}\leq 0\right) \mathcal{L}(xu(x))(\lambda)
    	-   \int_0^\infty e^{-\lambda x}x \mathbf{E}_0\left[1_{\{ \xi_{\mathbf{e}}<0\}}u(x-\xi_{\mathbf{e}})\right]\mathrm{d}x\right),
    	\end{align}
    	which implies the desired result.
    \end{proof}

       In the critical case $m=1$, we will need the following
     a priori upper bound of $u$.
		\begin{lemma}\label{l6}
			If $\alpha\in (1,2)$ and $m=1$,
			then there exists a constant $A>0$ such that,
			$$
			u(x)\leq A x^{-\alpha / \gamma}, \qquad \mbox{ for all } x\geq 0.
			$$
		\end{lemma}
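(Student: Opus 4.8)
Since $m=1$ we have $\Phi_0\equiv 0$, and it is standard that the critical process dies out, so $M<\infty$ a.s.\ and $u(x)\to 0$. The plan is to bootstrap the polynomial decay rate of $u$ out of the integral equation \eqref{integral equation}, using that the ``loss'' term $\Phi_R$ is comparable to $\mathbf E_0[1_{\{S_{\mathbf e}<x\}}u^\gamma(x-\xi_{\mathbf e})]$ by \eqref{remainder1.1}--\eqref{remainder1.2}, and then to sharpen the final estimate via the Laplace machinery of Lemma~\ref{l5}.

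\textbf{Step 1: a first integrability bound.} Integrate \eqref{integral equation} over $x\in(0,T)$. The substitution $y=x-\xi_{\mathbf e}$ together with $1_{\{S_{\mathbf e}<x\}}\le 1$ and $u\le 1$ shows $\int_0^T\mathbf E_0[1_{\{S_{\mathbf e}<x\}}u(x-\xi_{\mathbf e})]\,dx\le\int_0^T u(x)\,dx+\mathbf E_0[(-\xi_{\mathbf e})^+]$, which is finite since $\alpha>1$; also $\int_0^\infty\mathbf P_0(S_{\mathbf e}\ge x)\,dx=\mathbf E_0[S_{\mathbf e}]<\infty$ because $\mathbf P_0(S_{\mathbf e}\ge x)\asymp x^{-\alpha}$ by \eqref{Tail-of-xi}. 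The $\int_0^T u$ terms cancel, leaving $\int_0^\infty\Phi_R(x)\,dx<\infty$. Now $\Phi_R(x)\ge (1-\varepsilon)C_2(\gamma)\mathbf E_0[1_{\{S_{\mathbf e}<x\}}u^\gamma(x-\xi_{\mathbf e})1_{\{u(x-\xi_{\mathbf e})\le\delta\}}]$ (this is the first step toward \eqref{remainder1.1}); since $u\to 0$ the indicator $1_{\{u(x-\xi_{\mathbf e})\le\delta\}}$ equals $1_{\{x-\xi_{\mathbf e}\ge R_\delta\}}$ for a fixed $R_\delta$, and substituting $y=x-\xi_{\mathbf e}$ once more gives $\int_0^\infty u^\gamma(x)\,dx<\infty$. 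As $u$ is decreasing, $u^\gamma(x)\cdot(x/2)\le\int_{x/2}^x u^\gamma\to 0$, so $u(x)=o(x^{-1/\gamma})$; in particular $u(x)\le A_1x^{-1/\gamma}$ for all $x$.

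\textbf{Step 2: bootstrap.} Suppose $u(x)\le A_nx^{-\beta_n}$ for all $x$. Repeat the computation of Step 1 but weighted by $x^{\theta}$ with $0\le\theta<\alpha-1$: the extra terms produced by the weight are controlled by $\mathbf E_0[(\xi_{\mathbf e}^{+})^{\theta+1}]$ (finite because $\theta+1<\alpha$) and by $\int_0^\infty y^{\theta-1}u(y)\,dy$ (finite because $\theta<\beta_n$), using also that $\int_0^\infty x^\theta\mathbf P_0(S_{\mathbf e}\ge x)\,dx<\infty$ for $\theta<\alpha-1$. One obtains $\int_0^\infty x^\theta u^\gamma(x)\,dx<\infty$ for every $\theta<\min(\alpha-1,\beta_n)$, and monotonicity of $u^\gamma$ upgrades this to $u(x)=o\big(x^{-(1+\theta)/\gamma}\big)$, i.e.\ one may take $\beta_{n+1}$ to be any number below $\big(1+\min(\alpha-1,\beta_n)\big)/\gamma$. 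The map $\beta\mapsto(1+\min(\alpha-1,\beta))/\gamma$ is continuous, increasing, and — because $(\alpha-1)(\gamma-1)\le(2-1)(2-1)=1$ throughout $\alpha\in(1,2)$, $\gamma\in(1,2]$ — has unique fixed point $\alpha/\gamma$; hence $\beta_n\uparrow\alpha/\gamma$ and we conclude $u(x)\le A_\varepsilon x^{-(\alpha/\gamma-\varepsilon)}$ for every $\varepsilon>0$.

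\textbf{Step 3: sharpening.} Fix $\varepsilon\in\big(0,\tfrac{1}{\gamma}(\gamma-\alpha(\gamma-1))\big)$, a nonempty interval since $\alpha<2$; then $u(x)\le A_\varepsilon x^{-(\alpha/\gamma-\varepsilon)}$ forces $\mathcal L[u](\lambda)=o(\lambda^{\alpha-2})$ as $\lambda\downarrow 0$. Plugging this into Lemma~\ref{l5}(i) (with $\Phi_0\equiv 0$) and using \eqref{a2} gives $\mathcal L[x\Phi_R(x)](\lambda)=O(\lambda^{\alpha-2})$. On the other hand \eqref{remainder1.1}, after absorbing the $u^{\gamma+1}$ remainder and removing the truncation at cost $O(x^{-\alpha})$ via \eqref{Tail-of-xi}, yields $\Phi_R(x)\ge c\,\mathbf E_0[1_{\{S_{\mathbf e}<x\}}u^\gamma(x-\xi_{\mathbf e})]-c'x^{-\alpha}$; combining this with Lemma~\ref{Important-lemma2}(ii) applied to $f=u^\gamma$ (and the facts $\mathcal L[x\cdot x^{-\alpha}1_{\{x\ge1\}}](\lambda)\asymp\lambda^{\alpha-2}$ and $\int_0^\infty e^{-\lambda x}x\mathbf P_0(S_{\mathbf e}\ge x)\,dx\asymp\lambda^{\alpha-2}$) gives $\mathcal L[xu^\gamma(x)](\lambda)=O(\lambda^{\alpha-2})$. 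By the Tauberian theorem $\int_0^x y u^\gamma(y)\,dy=O(x^{2-\alpha})$, and since $u^\gamma$ is decreasing, $u^\gamma(x)\cdot\tfrac38 x^2\le\int_{x/2}^x y u^\gamma(y)\,dy\le\int_0^x y u^\gamma(y)\,dy=O(x^{2-\alpha})$, so $u^\gamma(x)=O(x^{-\alpha})$, i.e.\ $u(x)\le A\,x^{-\alpha/\gamma}$ for large $x$; enlarging $A$ and using $u\le1$ gives the bound for all $x\ge0$.

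\textbf{Main obstacle.} The delicate point is Step 3: both the killing contribution $\mathbf P_0(S_{\mathbf e}\ge x)$ and the $u^{\gamma+1}$ remainder in \eqref{remainder1.1}--\eqref{remainder1.2}, though pointwise of lower order, feed into $\mathcal L[x\Phi_R]$ at precisely the same order $\lambda^{\alpha-2}$ as the main term, so one can only extract an $O$-bound for $\mathcal L[xu^\gamma]$ — fortunately exactly what is needed. One must also verify that the exponent $\alpha/\gamma-\varepsilon$ reached in Step 2 is large enough to make $\mathcal L[u](\lambda)=o(\lambda^{\alpha-2})$, and this is where the standing hypothesis $\alpha<2$ (equivalently $(\alpha-1)(\gamma-1)<1$) is used.
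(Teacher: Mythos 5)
Your argument is essentially correct, but it follows a genuinely different route from the paper. The paper proves Lemma \ref{l6} probabilistically in a few lines: it splits $M$ according to a time horizon $t$, bounds the contribution before time $t$ by a many-to-one/reflection argument using only $\rho=\mathbf{P}_0(\xi_1\geq 0)>0$ (see \eqref{111}), bounds the contribution after time $t$ by the survival probability $\mathbb{P}(N_t\geq 1)$, invokes Zolotarev's theorem under {\bf(H)} to get $\mathbb{P}(N_t\geq1)\asymp t^{-1/(\gamma-1)}$ (see \eqref{222}), and optimizes $t=x^{\alpha(1-1/\gamma)}$, which directly explains the exponent $\alpha/\gamma$ as the balance between spatial spread $t^{1/\alpha}$ and survival decay. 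You instead extract the bound purely analytically from the integral equation \eqref{integral equation}: integrating it (with weights $x^{\theta}$, $\theta<\alpha-1$) to get weighted integrability of $u^{\gamma}$, bootstrapping the decay exponent up to $\alpha/\gamma-\varepsilon$ via the fixed-point map $\beta\mapsto(1+\min(\alpha-1,\beta))/\gamma$ (where $(\alpha-1)(\gamma-1)<1$ is exactly what makes the iteration climb past $\alpha-1$), and then sharpening to the exact exponent through Lemma \ref{l5}(i), \eqref{a2} and Lemma \ref{Important-lemma2}(ii); this avoids Zolotarev's theorem and the branching-tree argument entirely, at the cost of being considerably longer and of re-using the Laplace machinery that the paper only deploys after Lemma \ref{l6} (there is no circularity, since Lemmas \ref{l5} and \ref{Important-lemma2} do not depend on Lemma \ref{l6}). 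A few points in your sketch need to be written out with care: the uniform-in-$T$ cancellation of the weighted terms $\int_0^T x^{\theta}\mathbf{E}_0[1_{\{S_{\mathbf{e}}<x\}}u(x-\xi_{\mathbf{e}})]\,\mathrm{d}x-\int_0^T x^{\theta}u(x)\,\mathrm{d}x$ (the mean-value bound $(y+a)^{\theta}\leq y^{\theta}+\theta a y^{\theta-1}$ is what makes it work for $0<\theta<\min(\alpha-1,\beta_n)$); the tail bounds $\mathbf{P}_0(S_{\mathbf{e}}\geq x)=O(x^{-\alpha})$ and $\mathbf{P}_0(\xi_{\mathbf{e}}\geq x)=O(x^{-\alpha})$, which do not follow verbatim from \eqref{Tail-of-xi} but are standard (e.g.\ via a reflection bound and \eqref{a2}); and the final ``Tauberian'' step, which is really just the elementary inequality $\int_0^{1/\lambda}g(y)\,\mathrm{d}y\leq e\,\mathcal{L}[g](\lambda)$ for $g\geq 0$ combined with monotonicity of $u^{\gamma}$. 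With these details filled in, your proof is a valid, more elementary (no superprocess or Galton--Watson survival asymptotics) but longer alternative to the paper's.
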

		\begin{proof}
                The proof is similar to that of \cite[Lemma 3.3]{profeta}.
			The main difference is that  we have  \eqref{222} for our branching mechanism. Denote by $\underline{M}^{(t)}$ the maximum of $(Z_s: s\geq 0)$ on $[0,t]$ and by $\overline{M}^{(t)}$ the maximum of $(Z_s: s\geq 0)$ on $[t,+\infty]$.
			Since  $\alpha \in (1,2)$, we have $\rho:=\mathbf{P}_0(\xi_1 \geq 0)\in [1-\frac{1}{\alpha},\frac{1}{\alpha}]$.
			It follows that
			\begin{align}\label{domi-u}
				u(x)&=\mathbb P(M\geq x)
				\leq \mathbb P(\underline{M}^{(t)} \geq x)+\mathbb P(\overline{M}^{(t)} \geq x)\nonumber\\
				&\leq \mathbb P(\underline{M}^{(t)} \geq x)+\mathbb P\left(N_{t} \geq 1\right).
			\end{align}
			Define $T_x= \inf \left\{t \geq 0: \exists u \in N_{t}, X_{u}(t)\geq x\right\}$. Then $\left\{T_x \leq t\right\}=\left\{\underline{M}^{(t)} \geq x\right\}$. Applying strong Markov property at $T_x$, we get
			$$
			\mathbb{E}\left[
			\sum_{u\in N_t} 1_{\{X_u(t)\geq x\}}
			\mid \underline{M}^{(t)} \geq x\right] \geq \rho,
			$$
			which implies that
			\begin{equation}\label{111}
				\mathbb{P}\left(\underline{M}^{(t)} \geq x\right) \leq \rho^{-1} \mathbb{E}\left[
				\sum_{u\in N_t} 1_{\{X_u(t)\geq x\}}
				\right] \leq \rho^{-1}\mathbb{E}[N_{t}] \mathbf{P}_0\left(\xi_t \geq x\right)=\rho^{-1} \mathbf{P}_0\left(t^{1 / \alpha} \xi_1 \geq x\right),
			\end{equation}
			where we used the facts that $N_t$ is independent of the spatial positions, and $\mathbb{E}[N_{t}]=1$ for all $t \geq 0$.
			It follows from \eqref{generating} that the function
						$\sum_{k=0}^\infty p_k(1-x)^k-(1-x)$
			is a regularly varying at 0 with index $\gamma$.
			Hence by \cite{Zolotarev} [Theorem 4]  we know that
						$Q(t):=\mathbb P(N_{t} \geq 1)$ satisfies the following equation for some positive constant $c$:
			$$
			\lim_{t \to \infty}\frac{Q(t)}{t(\gamma-1) c Q(t)^{\gamma}}=1,
			$$
which implies that
					\begin{equation}\label{222}
				\mathbb P(N_{t} \geq 1) \sim \left(\frac{1}
							{c (\gamma-1)t}\right)^{\frac{1}{\gamma-1}}.
			\end{equation}
			Taking $t=x^{\alpha(1-\frac{1}{\gamma})}$ in \eqref{111} and \eqref{222},
			using \eqref{domi-u},  we get that there is some constant $A>0$ such that
			$$
			u(x) \leq A  x^{-\alpha / \gamma}.
			$$
			This completes the proof of the Lemma.
		\end{proof}

		\begin{proof}[Proof of Theorem \ref{t1} for $\alpha\in [1,2)$]
		We first prove that
		\begin{equation}\label{toprove-Laplace-limit}
			\lim_{\lambda \downarrow 0}\frac{\mathcal{L}\left[x (\Phi_0(x)+\Phi_R(x))\right](\lambda)}{\lambda^{\alpha-2}}
			=   \frac{c_+\Gamma(2-\alpha)}{\alpha} .
		\end{equation}
			For the upper bound, combining \eqref{a2} and Lemma \ref{l5}(i), we have
			\begin{align}\label{e4}
			& \frac{\alpha}{c_+\Gamma(2-\alpha)}	\limsup_{\lambda \downarrow 0}\frac{\mathcal{L}\left[x (\Phi_0(x)+\Phi_R(x))\right](\lambda)}{\lambda^{\alpha-2}}  \leq 1+ \limsup_{\lambda \downarrow 0} \frac{\lambda^2 \mathbf{E}_0\left[S_{\mathbf{e}} e^{-\lambda S_{\mathbf{e}}}\right]}{1-\mathbf{E}_0\left[e^{-\lambda S_{\mathbf{e}}}\right]-\lambda \mathbf{E}_0\left[S_{\mathbf{e}} e^{-\lambda S_{\mathbf{e}}}\right]} \mathcal{L}[u](\lambda)\nonumber\\
			& =  1+\frac{\alpha}{c_+\Gamma(2-\alpha)} \limsup_{\lambda \downarrow 0} \lambda^{2-\alpha} \mathbf{E}_0\left[S_{\mathbf{e}} e^{-\lambda S_{\mathbf{e}}}\right]\mathcal{L}[u](\lambda).
			\end{align}
			When $\alpha\in (\gamma,2)$, combining Lemma \ref{l6} and the fact that $\mathbf{E}_0(S_{\mathbf{e}})<\infty$, we see that
			\begin{align}\label{eq4}
				\lambda^{2-\alpha} \mathbf{E}_0\left[S_{\mathbf{e}} e^{-\lambda S_{\mathbf{e}}}\right]\mathcal{L}[u](\lambda)\leq \mathbf{E}_0\left[S_{\mathbf{e}} \right] \lambda^{2-\alpha}  \left(1+ A\int_1^\infty  x^{-\alpha/\gamma}\mathrm{d}x\right)\stackrel{\lambda\downarrow 0}{\longrightarrow} 0.
			\end{align}
			When $\alpha\in (1, \gamma]$,
			using $\alpha\in (1,2)$ and $\gamma\in (1,2]$, we have $1+\alpha \gamma^{-1} = \alpha + \gamma^{-1} \left(1-(\alpha-1)(\gamma-1)\right)>\alpha$. Thus  there exists $\delta\in (0,1)$ such that $1+\alpha\delta /\gamma >\alpha$.
			Therefore, combining Lemma \ref{l6} and $\mathbf{E}_0(S_{\mathbf{e}})<\infty$,
				\begin{align}\label{eq5}
				&\lambda^{2-\alpha} \mathbf{E}_0\left[S_{\mathbf{e}} e^{-\lambda S_{\mathbf{e}}}\right]\mathcal{L}[u](\lambda)\leq A \mathbf{E}_0\left[S_{\mathbf{e}} \right] \lambda^{2-\alpha}   \int_0^\infty e^{-\lambda x} x^{-\alpha\delta/\gamma}\mathrm{d}x\nonumber\\
				&= A \mathbf{E}_0\left[S_{\mathbf{e}} \right] \lambda^{1+\alpha\delta/\gamma-\alpha}   \int_0^\infty e^{-x} x^{-\alpha\delta/\gamma}\mathrm{d}x\stackrel{\lambda\downarrow0}{\longrightarrow}0.
			\end{align}
			When $\alpha=1$,
	fix a constant   $\delta\in (1-1/\gamma,1)$.	
 Combining  \cite[(2.1)]{profeta} and Markov's inequality, we have
			\[
			y^{\delta}\mathbf{P}_0(S_{\mathbf{e}}\geq y) \leq
\frac{y^{\delta}}{(1-e^{-1})}
\mathbf{E}_0(1-e^{-y^{-1}S_{\mathbf{e}}}) =\frac{y^{\delta-1} \ln y}{(1-e^{-1})} \frac{\mathbf{E}_0(1-e^{-y^{-1}S_{\mathbf{e}}})}{y^{-1}\ln y} \stackrel{y\to\infty}{\longrightarrow} 0.
			\]		
Thus there exists $c_\delta>0$ such that
  $\mathbf{P}_0(S_{\mathbf{e}}\geq y)\leq c_\delta y^{-\delta}$ for any $y>0$,
which implies that
			\begin{align}\label{eq6}
					\lambda \mathbf{E}_0\left[S_{\mathbf{e}} e^{-\lambda S_{\mathbf{e}}}\right]\mathcal{L}[u](\lambda) &  =  	
 \lambda\int_0^\infty (1-\lambda y)e^{-\lambda y} \mathbf{P}_0(S_{\mathbf{e}}>y)\mathrm{d}y \int_0^\infty e^{-\lambda x} u(x)\mathrm{d}x \nonumber\\&
					 \leq	A \lambda \int_0^\infty e^{-\lambda y}\mathbf{P}_0(S_{\mathbf{e}}\geq y)\mathrm{d}y \int_0^\infty e^{-\lambda x} x^{-1/\gamma}\mathrm{d} x\nonumber\\
					& \leq Ac_\delta  \lambda \int_0^\infty e^{-\lambda y} y^{-\delta}\mathrm{d}y \int_0^\infty e^{-\lambda x} x^{-1/\gamma}\mathrm{d} x\nonumber\\
					& = Ac_\delta  \lambda^{\delta+ \gamma^{-1}-1} \int_0^\infty e^{- y} y^{-\delta}\mathrm{d}y \int_0^\infty e^{- x} x^{-1/\gamma}\mathrm{d} x\stackrel{\lambda\downarrow 0}{\longrightarrow}0.
			\end{align}
			Combining \eqref{e4},  \eqref{eq4}, \eqref{eq5} and \eqref{eq6}, we get
			\begin{align}\label{to-prove-upper}
				\limsup_{\lambda \downarrow 0}\frac{\mathcal{L}\left[x (\Phi_0(x)+\Phi_R(x))\right](\lambda)}{\lambda^{\alpha-2}}
				\leq   \frac{c_+\Gamma(2-\alpha)}{\alpha} .
			\end{align}
			
			Now we prove the lower bound. Similarly, combining \eqref{a2} and Lemma \ref{l5}(ii), we see that for any $\varepsilon>0$,
			\begin{align}\label{eq8}
				& \frac{\alpha}{c_+\Gamma(2-\alpha)}	\liminf_{\lambda \downarrow 0}\frac{\mathcal{L}\left[x (\Phi_0(x)+\Phi_R(x))\right](\lambda)}{\lambda^{\alpha-2}}
				\nonumber\\
				& \geq  1- \frac{\alpha}{c_+\Gamma(2-\alpha)} \limsup_{\lambda \downarrow 0} \lambda^{2-\alpha} \mathbf{E}_0\left[ 1- e^{-\lambda \xi_{\mathbf{e}}^+}\right]\mathcal{L}[xu(x)](\lambda)  - \frac{\alpha}{c_+\Gamma(2-\alpha)} \nonumber\\
				&\quad \times \limsup_{\lambda \downarrow 0} \lambda^{2-\alpha} \left( \mathbf{P}_0 \left(\xi_{\mathbf{e}}\leq 0\right) \mathcal{L}(xu(x))(\lambda) - \int_0^\infty e^{-\lambda x}x \mathbf{E}_0\left[1_{\{ \xi_{\mathbf{e}}<0\}}u(x-\xi_{\mathbf{e}})\right]\mathrm{d}x\right).
			\end{align}
			From Lemma \ref{l6},  it holds that
			\begin{align}\label{eq10}
				\mathcal{L}[xu(x)](\lambda) \leq A\int_0^\infty e^{-\lambda x} x^{1- \gamma^{-1}\alpha}\mathrm{d}x = A \lambda^{\alpha/\gamma -2}\int_0^\infty e^{- x} x^{1- \gamma^{-1}\alpha}\mathrm{d}x.
			\end{align}
			Since $1-\gamma^{-1}\alpha > 1- 2=-1$, we conclude from the above inequality that for any $\alpha\in [1,2)$, there exists $A'$ such that
			\begin{align}
				\lambda^{2-\alpha} \mathbf{E}_0\left[ 1- e^{-\lambda \xi_{\mathbf{e}}^+}\right]\mathcal{L}[xu(x)](\lambda)  \leq 	A' \lambda^{\alpha/\gamma-\alpha} \mathbf{E}_0\left[ 1- e^{-\lambda \xi_{\mathbf{e}}^+}\right].
			\end{align}
Note that when $\alpha=1$, by \cite[(2.1)]{profeta},   $\lim_{\lambda\downarrow 0} \frac{1}{\lambda \ln(\lambda^{-1})} \mathbf{E}_0\left[ 1- e^{-\lambda \xi_{\mathbf{e}}^+}\right] = \frac{c_+}{\alpha}$, and   when $\alpha\in (1,2)$, $\lim_{\lambda\downarrow 0} \frac{1}{\lambda } \mathbf{E}_0\left[ 1- e^{-\lambda \xi_{\mathbf{e}}^+}\right] =\mathbf{E}_0(\xi_{\mathbf{e}}^+ ) $.  Using the fact that $\alpha/\gamma -\alpha +1 = \gamma^{-1}(1-(\alpha-1)(\gamma-1))>0$, we have
\begin{align}\label{eq7}
			\limsup_{\lambda\downarrow 0}	\lambda^{2-\alpha} \mathbf{E}_0\left[ 1- e^{-\lambda \xi_{\mathbf{e}}^+}\right]\mathcal{L}[xu(x)](\lambda) =0.
			\end{align}
 Plugging \eqref{eq7} into \eqref{eq8} yields that
				\begin{align}\label{eq9}
				& \frac{\alpha}{c_+\Gamma(2-\alpha)}	\liminf_{\lambda \downarrow 0}\frac{\mathcal{L}\left[x (\Phi_0(x)+\Phi_R(x))\right](\lambda)}{\lambda^{\alpha-2}}
				 \geq  1 - \frac{\alpha}{c_+\Gamma(2-\alpha)} \nonumber\\
				&\quad \times \limsup_{\lambda \downarrow 0} \lambda^{2-\alpha} \left( \mathbf{P}_0 \left(\xi_{\mathbf{e}}\leq 0\right) \mathcal{L}(xu(x))(\lambda) -  \int_0^\infty e^{-\lambda x}x \mathbf{E}_0\left[1_{\{ \xi_{\mathbf{e}}<0\}}u(x-\xi_{\mathbf{e}})\right]\mathrm{d}x\right).
			\end{align}

When $\alpha\in (1,2)$, using the fact that $\mathbf{E}_0(|\xi_{\mathbf{e}}|)<\infty$, we see that
			\begin{align}\label{eq11}
				& \int_0^\infty e^{-\lambda x}x \mathbf{E}_0\left[
    1_{\{ \xi_{\mathbf{e}}\leq 0\}}
u(x-\xi_{\mathbf{e}})\right]\mathrm{d}x = \mathbf{E}_0\left(
1_{\left\{\xi_{\mathbf{e}}\leq 0\right\}}
\int_{-\xi_{\mathbf{e}}}^\infty e^{-\lambda(x+\xi_{\mathbf{e}})}(x+\xi_{\mathbf{e}})u(x)\mathrm{d}x\right)\nonumber\\
				&\geq \mathbf{E}_0\left(
 1_{\left\{\xi_{\mathbf{e}}\leq 0\right\}}
\int_{-\xi_{\mathbf{e}}}^\infty e^{-\lambda x}(x+\xi_{\mathbf{e}})u(x)\mathrm{d}x\right) \geq  \mathbf{P}_0 \left(\xi_{\mathbf{e}}\leq 0\right) \mathcal{L}(xu(x))(\lambda) \nonumber\\
				&\quad - \mathbf{E}_0\left(1_{\left\{
\xi_{\mathbf{e}}\leq 0\right\}}
\int_0^{-\xi_{\mathbf{e}}} e^{-\lambda x}xu(x)\mathrm{d}x\right) - \mathbf{E}_0(|\xi_{\mathbf{e}}|) \mathcal{L}(u)(\lambda)\nonumber\\
				&\geq \mathbf{P}_0 \left(\xi_{\mathbf{e}}\leq 0\right) \mathcal{L}(xu(x))(\lambda) -2 \mathbf{E}_0(|\xi_{\mathbf{e}}|) \mathcal{L}(u)(\lambda).
			\end{align}
Therefore, combining \eqref{eq4} and \eqref{eq5},
 we conclude from \eqref{eq11} that
		   \begin{align}
		    &\limsup_{\lambda \downarrow 0} \lambda^{2-\alpha} \left( \mathbf{P}_0 \left(\xi_{\mathbf{e}}\leq 0\right) \mathcal{L}(xu(x))(\lambda) -  \int_0^\infty e^{-\lambda x}x \mathbf{E}_0\left[1_{\{ \xi_{\mathbf{e}}<0\}}u(x-\xi_{\mathbf{e}})\right]\mathrm{d}x\right)\nonumber\\
		    &\leq 2\mathbf{E}_0(|\xi_{\mathbf{e}}|)  \limsup_{\lambda \downarrow 0} \lambda^{2-\alpha}  \mathcal{L}(u)(\lambda) =
		    0.
		   \end{align}
Combining the above inequality with
		   \eqref{to-prove-upper} and \eqref{eq9}, we get \eqref{toprove-Laplace-limit} holds for $\alpha\in (1,2)$.

 Now we consider the case $\alpha=1$.
For any fixed $\delta\in (0,1)$,
 noticing that $\mathbf{E}_0(|\xi_{\mathbf{e}}|^{1-\delta})<\infty$, we get that
		   	\begin{align}\label{eq11'}
		   	& \int_0^\infty e^{-\lambda x}x \mathbf{E}_0\left[1_{\{ \xi_{\mathbf{e}}\leq 0\}}u(x-\xi_{\mathbf{e}})\right]\mathrm{d}x = \mathbf{E}_0\left(1_{\left\{\xi_{\mathbf{e}}\leq 0\right\}}\int_{-\xi_{\mathbf{e}}}^\infty e^{-\lambda(x+\xi_{\mathbf{e}})}(x+\xi_{\mathbf{e}})u(x)\mathrm{d}x\right)\nonumber\\
		   	&\geq \mathbf{E}_0\left(1_{\left\{\xi_{\mathbf{e}}\leq 0\right\}}\int_{-\xi_{\mathbf{e}}}^\infty e^{-\lambda x}(x+\xi_{\mathbf{e}})u(x)\mathrm{d}x\right) = \mathbf{P}_0 \left(\xi_{\mathbf{e}}\leq 0\right) \mathcal{L}(xu(x))(\lambda) \nonumber\\
		   	&\quad - \mathbf{E}_0\left(1_{\left\{\xi_{\mathbf{e}}<0\right\}}\int_0^{-\xi_{\mathbf{e}}} e^{-\lambda x}xu(x)\mathrm{d}x\right) -  \mathbf{E}_0\left(1_{\left\{\xi_{\mathbf{e}}<0\right\}}\int_{-\xi_{\mathbf{e}}}^\infty e^{-\lambda x} |\xi_{\mathbf{e}}| u(x)\mathrm{d}x\right)  \nonumber\\
		   	&\geq \mathbf{P}_0 \left(\xi_{\mathbf{e}}\leq 0\right) \mathcal{L}(xu(x))(\lambda) - 2\mathbf{E}_0(|\xi_{\mathbf{e}}|^{1-\delta}) \mathcal{L}(x^\delta u(x))(\lambda).
		   \end{align}
		   By taking $\delta<\gamma^{-1}$, we get from Lemma \ref{l6} and  \eqref{eq11'} that
		   \begin{align}\label{eq12}
		   	&\limsup_{\lambda \downarrow 0} \lambda \left( \mathbf{P}_0 \left(\xi_{\mathbf{e}}\leq 0\right) \mathcal{L}(xu(x))(\lambda) -  \int_0^\infty e^{-\lambda x}x \mathbf{E}_0\left[1_{\{ \xi_{\mathbf{e}}<0\}}u(x-\xi_{\mathbf{e}})\right]\mathrm{d}x\right)\nonumber\\
		   	&\leq 2\mathbf{E}_0(|\xi_{\mathbf{e}}|^{1-\delta})  \limsup_{\lambda \downarrow 0} \lambda \mathcal{L}(x^\delta u(x))(\lambda) \leq 2A\mathbf{E}_0(|\xi_{\mathbf{e}}|^{1-\delta})  \limsup_{\lambda \downarrow 0} \lambda  \int_0^\infty e^{-\lambda y} y^{\delta-1/\gamma}\mathrm{d} y\nonumber\\
		   	& = 2A\mathbf{E}_0(|\xi_{\mathbf{e}}|^{1-\delta})  \limsup_{\lambda \downarrow 0} \lambda^{1/\gamma -\delta} \int_0^\infty e^{-y} y^{\delta-1/\gamma}\mathrm{d} y=0.
		   \end{align}
		   Combining \eqref{to-prove-upper}, \eqref{eq9} and \eqref{eq12}, we get that \eqref{toprove-Laplace-limit} holds when $\alpha=1$.
		
		The rest of the proof is now similar to the case
		$\alpha \in(0,1)$.
		We first consider the case $m=1$. In this case, $\Phi_0(x)=0$.
		Combining
		 \eqref{remainder1.2} and
			 Lemma \ref{Important-lemma2} (i)
		 with $f=u^{\gamma}$ and $f=u^{\gamma+1}$, we get that
		\begin{align}\label{upper-phi-r-2}
			\mathcal{L}\left[x\Phi_R\right](\lambda) &\leq (1+\varepsilon)C_2(\gamma)\left( \mathcal{L}\left[xu^{\gamma}(x)\right](\lambda)+\mathbf{E}_0\left[S_{\mathbf{e}}\right]\mathcal{L}\left[u^{\gamma}\right](\lambda)\right)\nonumber \\
			&\quad +\frac{1}{\delta^{\gamma+1}}\left(  \mathcal{L}\left[xu^{\gamma+1}(x)\right](\lambda)+ \mathbf{E}_0\left[S_{\mathbf{e}}\right]\mathcal{L}\left[u^{\gamma+1}\right](\lambda) \right) .
		\end{align}
		For any $\varepsilon_1>0$, since $\lim _{x \rightarrow+\infty} u(x)=0$,
		there exists
				$A_2>0$
		such that $u(x) \leq \varepsilon_1$ for $x \geq A_2$.
		Similar to \eqref{u^{gamma+1}}, we have
		\begin{equation}\label{xu^{gamma+1}}
			\mathcal{L}\left[xu^{\gamma+1}(x)\right](\lambda)\leq A_{2}^2+\varepsilon_1 \mathcal{L}\left[xu^{\gamma}(x)\right](\lambda).
		\end{equation}
		Combining \eqref{toprove-Laplace-limit}, \eqref{upper-phi-r-2} and \eqref{xu^{gamma+1}}, we get
		\begin{align}
	 \frac{c_+\Gamma(2-\alpha)}{\alpha}  \leq
		\left((1+\varepsilon)C_2(\gamma)+\varepsilon_1/\delta^{\gamma+1}\right)
		\liminf _{\lambda \downarrow 0}\lambda^{2-\alpha} \mathcal{L}\left[xu^{\gamma}(x)\right](\lambda).
		\end{align}
	Letting $\varepsilon_1\to 0$ first and then $\varepsilon\to 0$, we get that
		\begin{align}\label{lowerbound2}
		 \frac{c_+\Gamma(2-\alpha)}{\alpha} \leq
	C_2(\gamma)	\liminf _{\lambda \downarrow 0}\lambda^{2-\alpha} \mathcal{L}\left[xu^{\gamma}(x)\right](\lambda).
		\end{align}
	Next, combining \eqref{xu^{gamma+1}}, Lemma \ref{Important-lemma2} (ii) with $f=u^{\gamma}$ and Lemma \ref{Important-lemma2} (i) with $f=u^{\gamma+1}$, it holds that
		\begin{align*}
			& \mathcal{L}\left[x\Phi_R(x)\right](\lambda) \geq C_2(\gamma)(1-\varepsilon)\int_0^{+\infty} e^{-\lambda x} x\left(\mathbf{P}_0\left(\xi_{\mathbf{e}} \geq x\right)-\mathbf{P}_0\left(S_{\mathbf{e}} \geq x\right)\right) \mathrm{d} x\\
			& +C_2(\gamma)(1-\varepsilon)\left\{\mathbf{E}_0\left[e^{-\lambda \xi_{\mathbf{e}}^{+}}\right] \mathcal{L}\left[x u^{\gamma}(x)\right](\lambda)-\mathbf{E}_0\left[1_{\left\{\xi_{\mathbf{e}}<0\right\}} \int_0^{-\xi_{\mathbf{e}}} e^{-\lambda z} z u^{\gamma}(z)  \mathrm{d} z\right]+ \mathbf{E}_0\left[\xi_{\mathbf{e}} e^{-\lambda \xi_{\mathbf{e}}^{+}}\right] \mathcal{L}\left[u^{\gamma}\right](\lambda)\right\}\\
			& -\frac{1}{\delta^{\gamma+1}} \left\{A_{1}^2+\varepsilon_1 \mathcal{L}\left[xu^{\gamma}(x)\right](\lambda)+ \mathbf{E}_0\left[S_{\mathbf{e}}\right]\mathcal{L}\left[u^{\gamma+1}\right](\lambda)\right\}.
		\end{align*}
		Applying Lemma \ref{Important-lemma2} (iii) with $f=u^{\gamma}$, we obtain
		$$
		 \frac{c_+\Gamma(2-\alpha)}{\alpha} \geq
		\left(C_2(\gamma)(1-\varepsilon)-\varepsilon_1 /\delta^{\gamma+1}\right) \limsup _{\lambda \downarrow 0}\lambda^{2-\alpha}\mathcal{L}\left[xu^{\gamma}(x)\right](\lambda).
		$$
	Letting $\varepsilon_1\to 0$ first and then $\varepsilon\to 0$, we get that
		\begin{align}\label{upperbound2}
		\frac{c_+\Gamma(2-\alpha)}{\alpha}\geq C_2(\gamma) \limsup _{\lambda \downarrow 0}\lambda^{2-\alpha} \mathcal{L}\left[xu^{\gamma}(x)\right](\lambda).
		\end{align}
	 Combining \eqref{lowerbound2} and \eqref{upperbound2}, we conclude that
		$$
		\lim_{\lambda \downarrow 0}\frac{\mathcal{L}\left[x u^{\gamma}(x)\right](\lambda)}{\lambda^{\alpha-2}}=
		   \frac{c_+\Gamma(2-\alpha)}{C_2(\gamma)\alpha}.
		$$
		Finally, by the Tauberian theorem, we get
		$$
		\lim_{x \rightarrow+\infty}\frac{1}{x^{2-\alpha}}
		\int_0^x z u^{\gamma}(z) \mathrm{d} z= \frac{c_+}{\alpha(2-\alpha)C_2(\gamma)}.
		$$
		The desired result now follows from  Karamata's monotone density theorem \cite[Theorem 1.7.2]{bingham_goldie_teugels_1987}.

		Next we consider the  case  $m<1$. Combining $\Phi_R\geq 0$, \eqref{varepsilon_prime}
		and Lemma \ref{Important-lemma2} (i) with $f=u$,
		we get  that for any $\varepsilon^{\prime}>0$, there exists a constant $A_3=A_3(\varepsilon^{\prime})$, such that for all small $\lambda$,
		$$
		\mathcal{L}\left[x\Phi_0(x)\right](\lambda)  \leq \mathcal{L}\left[x(\Phi_0+\Phi_R)\right](\lambda)  \leq (1+\varepsilon^{\prime})(1-m)\left\{\mathcal{L}[xu(x)](\lambda)+ \mathbf{E}_0\left[S_{\mathbf{e}}\right]\mathcal{L}[u](\lambda)\right\}+A_3,
		$$
		which, by \eqref{toprove-Laplace-limit},  implies that
		\begin{equation}\label{lowerbound}
		(1-m)	\limsup _{\lambda \downarrow 0}\lambda^{2-\alpha} \mathcal{L}[xu(x)](\lambda)\leq 	 \frac{c_+\Gamma(2-\alpha)}{\alpha} \leq
		(1-m)(1+\varepsilon')	\liminf _{\lambda \downarrow 0}\lambda^{2-\alpha} \mathcal{L}[xu(x)](\lambda) .
		\end{equation}
			Letting $\varepsilon'\downarrow 0$, we get that
		$$
		\lim_{\lambda \downarrow 0}
			\frac{1}{\lambda^{\alpha-2}}\mathcal{L}\left[xu(x)\right](\lambda)=\frac{c_+\Gamma(2-\alpha)}{\alpha(1-m)}.
		$$
		Finally, by the Tauberian theorem,
		$$
		\lim_{x \rightarrow+\infty}
		\frac{1}{x^{2-\alpha}}\int_0^x z u(z) \mathrm{d} z= \frac{c_+\Gamma(2-\alpha)}{\alpha(1-m)}.
		$$
		The desired result now follows from  Karamata's monotone density theorem \cite[Theorem 1.7.2]{bingham_goldie_teugels_1987}.
		\end{proof}
		
	\section{Proof of Theorem \ref{t2}}
	
	Define $\widetilde{\xi}=-\xi$, $\widetilde{\tau}_y:= \inf\left\{t: \widetilde{\xi}_t\leq y\right\}$ and
	\[
	f(u) := \frac{G(u)}{u}= \frac{\sum_{k=0}^\infty p_k (1-u)^k -(1-u)}{u}, \quad u\in [0, 1].
	\]
	According to \cite[Lemma 2.3]{HJRS2025}, $u$ has the following representation:
	 \begin{align}\label{FK-formula}
	 	u(x)= \mathbf{E}_x\left(\exp\left\{-\int_0^{\widetilde{\tau}_y}f\left(u\left(\widetilde{\xi}_s\right)\right)\mathrm{d}s\right\}\right) u(y), \quad 0\leq y <x.
	 \end{align}
	 Similar to  \cite{HJRS2025}, we consider the function
	  \[
	  [0,\infty) \ni x\mapsto \frac{u\left(x+ y u(x)^{-\frac{\gamma-1}{\alpha}}\right)}{u(x)},
	  \]
	  which is bounded between $0$ and $1$.  Therefore, using a diagonalization argument, we can find a subsequence
	  $\{x_k\in [0,\infty)\}$ with $\lim_{k\to\infty} x_k =+\infty$ such that for all $y\geq 0, y\in \mathbb{Q}$, the following limits exist:
	  \begin{align}\label{step_7}
	  	\phi(y):= \lim_{k\to\infty} \frac{u\left(x_k+ y u(x_k)^{-\frac{\gamma-1}{\alpha}}\right)}{u(x_k)}.
	  \end{align}
	  Since $u(x)$ is decreasing, we see that $\phi(0)=1$ and $\phi(y)\in [0,1]$ for any $y\in  \mathbb{Q}\cap [0,\infty)$. Moreover,  $\phi$ is decreasing in $\mathbb{Q}\cap [0,\infty)$. Therefore, for any $y\geq 0$, we can define
	  \begin{align}\label{step_8}
	  	\phi(y):= \sup_{z\in \mathbb{Q}, z\geq y} \phi(z) = \lim_{z\in \mathbb{Q}, z\downarrow y} \phi(y).
	  \end{align}
	
	  \begin{lemma}\label{l:lemma4}
	  	  The limit \eqref{step_7} holds for all $y\geq 0$.
	  	  Also, it holds that for any $K>0$,
	  	  \begin{align}\label{step_5}
	  	  \lim_{k\to\infty} \sup_{y\in [0,K]} \left|  \frac{u\left(x_k+ y u(x_k)^{-\frac{\gamma-1}{\alpha}}\right)}{ \phi (y)u(x_k)} -1\right| =0.
	  	  \end{align}
	  	  Moreover,  $\phi$  satisfies the equation
	  	  \begin{align}\label{PDE}
	  	  	\phi(y) = \mathbf{E}_{y}\left(\exp\left\{-C_2(\gamma)\int_0^{\widetilde{\tau}_0}\left(\phi(\widetilde{\xi}_s)\right)^{\gamma-1}\mathrm{d}s\right\} \right),\quad y\geq 0,
	  	  \end{align}
	  	  where $C_2(\gamma)$ is defined in \eqref{step_2}.
	  \end{lemma}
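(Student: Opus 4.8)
The strategy is to convert the Feynman–Kac representation \eqref{FK-formula} into an integral equation for the rescaled profiles and then pass to the limit along $\{x_k\}$ inside the exponential, exploiting the $\alpha$-stable scaling of $\widetilde{\xi}$ together with the asymptotics $f(v)=C_2(\gamma)v^{\gamma-1}(1+o(1))$ furnished by \eqref{step_2}. Set $c_k:=u(x_k)^{-(\gamma-1)/\alpha}$, which tends to $+\infty$ since $u(x_k)\to0$, and write
\[
g_k(y):=\frac{u\bigl(x_k+y\,c_k\bigr)}{u(x_k)},\qquad y\geq0 .
\]
Each $g_k$ is non-increasing, takes values in $[0,1]$, satisfies $g_k(0)=1$, and obeys $g_k(y)\to\phi(y)$ for every $y\in\mathbb{Q}\cap[0,\infty)$ by \eqref{step_7}. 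A routine monotonicity argument upgrades this to $g_k(y)\to\phi(y)$ at every continuity point of $\phi$, hence for Lebesgue-almost every $y\geq0$, because the non-increasing function $\phi$ has at most countably many discontinuities.

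First I would specialise \eqref{FK-formula} to the outer point $x_k+y\,c_k$ and the inner point $x_k$. Using spatial homogeneity of $\widetilde{\xi}$, the stable scaling identity (the law under $\mathbf{P}_{yc_k}$ of $(\widetilde{\xi}_s)_{s\geq0}$ coincides with the law under $\mathbf{P}_y$ of $(c_k\,\widetilde{\xi}_{s/c_k^{\alpha}})_{s\geq0}$), the substitution $s=c_k^{\alpha}r$, the identity $c_k^{\alpha}=u(x_k)^{-(\gamma-1)}$, and the fact that a spectrally positive stable process creeps downward, so that $\widetilde{\xi}_r>0$ for $r<\widetilde{\tau}_0$ and therefore $u(x_k+c_k\widetilde{\xi}_r)=g_k(\widetilde{\xi}_r)\,u(x_k)$, one arrives at
\begin{equation}\label{eq:plan1}
g_k(y)=\mathbf{E}_{y}\!\left[\exp\Bigl\{-\int_0^{\widetilde{\tau}_0}u(x_k)^{-(\gamma-1)}\,f\bigl(g_k(\widetilde{\xi}_r)\,u(x_k)\bigr)\,\mathrm{d}r\Bigr\}\right],\qquad y\geq0 .
\end{equation}
Since $g_k(\widetilde{\xi}_r)\,u(x_k)=u(x_k+c_k\widetilde{\xi}_r)\leq u(x_k)\to0$ uniformly along the path, \eqref{generating} (equivalently \eqref{step_2}) gives, for every $\varepsilon>0$ and all large $k$, the pointwise bounds $(C_2(\gamma)-\varepsilon)g_k(z)^{\gamma-1}\leq u(x_k)^{-(\gamma-1)}f(g_k(z)u(x_k))\leq(C_2(\gamma)+\varepsilon)g_k(z)^{\gamma-1}$ for all $z\geq0$, whence the sandwich
\begin{equation}\label{eq:plan2}
\mathbf{E}_{y}\!\left[e^{-(C_2(\gamma)+\varepsilon)\int_0^{\widetilde{\tau}_0}g_k(\widetilde{\xi}_r)^{\gamma-1}\mathrm{d}r}\right]\leq g_k(y)\leq\mathbf{E}_{y}\!\left[e^{-(C_2(\gamma)-\varepsilon)\int_0^{\widetilde{\tau}_0}g_k(\widetilde{\xi}_r)^{\gamma-1}\mathrm{d}r}\right].
\end{equation}

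Next I would let $k\to\infty$ in \eqref{eq:plan2}. Because $\widetilde{\xi}_r$ has an absolutely continuous law for every $r>0$, Fubini's theorem and the Lebesgue-a.e.\ convergence of $g_k$ show that $g_k(\widetilde{\xi}_r)\to\phi(\widetilde{\xi}_r)$ for Lebesgue-a.e.\ $r\in[0,\widetilde{\tau}_0]$, $\mathbf{P}_y$-almost surely; since $\widetilde{\tau}_0<\infty$ a.s.\ (the process is point-recurrent for $\alpha\in(1,2)$, and deterministic when $\alpha=1$, $\eta<0$) and $0\leq g_k\leq1$, dominated convergence applied first on the finite interval $[0,\widetilde{\tau}_0]$ and then under $\mathbf{E}_y$ shows that both sides of \eqref{eq:plan2} converge to $\mathbf{E}_y[\exp\{-(C_2(\gamma)\pm\varepsilon)\int_0^{\widetilde{\tau}_0}\phi(\widetilde{\xi}_r)^{\gamma-1}\mathrm{d}r\}]$. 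Letting $\varepsilon\downarrow0$ then shows that $\lim_{k}g_k(y)$ exists for every $y\geq0$ and equals $\psi(y):=\mathbf{E}_y[\exp\{-C_2(\gamma)\int_0^{\widetilde{\tau}_0}\phi(\widetilde{\xi}_r)^{\gamma-1}\mathrm{d}r\}]$. The function $\psi$ is non-increasing (a pointwise limit of the $g_k$), is continuous in $y$ by the standard continuity of the first passage time $\widetilde{\tau}_0$ in the starting point (regularity of $(-\infty,0)$ for the spectrally positive process) together with bounded convergence, and coincides with $\phi$ on $\mathbb{Q}\cap[0,\infty)$; comparing with \eqref{step_8} yields $\psi\equiv\phi$ on $[0,\infty)$. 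This establishes that the limit \eqref{step_7} holds for all $y\geq0$ and that $\phi$ solves \eqref{PDE}. In addition $\phi(y)=\psi(y)\geq\mathbf{E}_y[e^{-C_2(\gamma)\widetilde{\tau}_0}]>0$ for every $y$, using $\phi\leq1$ and $\widetilde{\tau}_0<\infty$ a.s., so $\phi$ is continuous and strictly positive throughout $[0,\infty)$.

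Finally, \eqref{step_5} follows from a Dini-type argument: the $g_k$ are non-increasing and converge pointwise on $[0,\infty)$ to the \emph{continuous} limit $\phi$, hence converge uniformly on every compact interval $[0,K]$; since $\phi\geq\phi(K)>0$ on $[0,K]$, dividing through gives $\sup_{y\in[0,K]}\abs{g_k(y)/\phi(y)-1}\to0$. The principal obstacle is the limit passage of the previous paragraph: one has only Lebesgue-a.e.\ (not pointwise everywhere) convergence of $g_k$, so the absolute continuity of the marginals of $\widetilde{\xi}$ must be invoked to ensure $g_k(\widetilde{\xi}_r)\to\phi(\widetilde{\xi}_r)$ for almost every time $r$; the time horizon $\widetilde{\tau}_0$ is random---and itself rescales under the stable scaling---so one must argue pathwise on the finite interval $[0,\widetilde{\tau}_0]$ rather than rely on integrability of $\widetilde{\tau}_0$; and the continuity of $\psi$, needed both to identify $\psi$ with $\phi$ and to run the Dini argument, rests on the continuous dependence of $\widetilde{\tau}_0$ on the starting point.
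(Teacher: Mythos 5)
Your argument is correct and arrives at \eqref{PDE} by the same core computation as the paper (Feynman--Kac representation \eqref{FK-formula}, stable scaling, the sandwich furnished by \eqref{step_2}, dominated convergence), but it organizes the preliminary steps differently. The paper first applies \eqref{FK-formula} between the two rescaled points $x_k+y_1u(x_k)^{-\frac{\gamma-1}{\alpha}}$ and $x_k+y_2u(x_k)^{-\frac{\gamma-1}{\alpha}}$ with the crude bound $f(v)\leq cv^{\gamma-1}$ to obtain the quantitative two-point estimate $\phi(y_1)\geq\phi(y_2)\geq\phi(y_1)\mathbf{E}_1\left(\exp\left\{-c(y_2-y_1)^\alpha\widetilde{\tau}_0\right\}\right)$, i.e.\ \eqref{step_3}; this single inequality simultaneously yields the extension of the limit \eqref{step_7} to all $y\geq0$, the continuity and strict positivity of $\phi$ on compacts, hence \eqref{step_5}, and only afterwards is \eqref{PDE} derived. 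You instead pass to the limit in the rescaled equation first, using only Lebesgue-a.e.\ convergence of the monotone profiles $g_k$ together with absolute continuity of the marginals of $\widetilde{\xi}$, and you recover continuity of $\phi$ and the everywhere-limit a posteriori from the equation; \eqref{step_5} then follows from the Dini/P\'olya theorem for monotone functions, which is in substance the argument from \cite{HJRS2025} that the paper invokes. Your route avoids proving a modulus of continuity for $\phi$ by hand; the paper's route is more elementary and its intermediate estimate \eqref{step_3} is reused later (in the proof of Lemma \ref{test-lemma}), so it is not wasted work there.

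Two small points need patching. First, Theorem \ref{t2} also covers $\alpha=1$ with $\eta<0$, where $\widetilde{\xi}$ is a deterministic drift and the one-dimensional marginals are \emph{not} absolutely continuous; your Fubini step fails as stated in that case, though the conclusion survives via the deterministic change of variable $r\mapsto y+\eta r$, which maps Lebesgue-null sets to Lebesgue-null sets. Second, identifying $\psi$ with $\phi$ on all of $[0,\infty)$ genuinely requires continuity of $\psi$ at every fixed $y$: agreement on the rationals plus monotonicity alone only gives $\phi(y)\leq\psi(y)\leq\lim_{z\uparrow y}\phi(z)$. Your appeal to continuous dependence of $\widetilde{\tau}_0$ on the starting point is the right idea, and it can be justified by observing that under $\mathbf{P}_0$ the map $y\mapsto\inf\{t:\widetilde{\xi}_t\leq-y\}$ is a subordinator and therefore has no fixed discontinuities. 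Neither point is a real gap, but both deserve a line in the final write-up.
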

	  \begin{proof}
	  	 Fix two arbitrary non-negative rational numbers $y_1<y_2$ and set $z_i(k)= y_i u(x_k)^{-\frac{\gamma-1}{\alpha}}$, $i=1, 2$. Combining the definition of $\phi$ and \eqref{FK-formula}, we get that
	  	 \begin{align}\label{step_1}
	  	 	\phi(y_1) &  \geq \phi(y_2) = \lim_{k\to\infty} \frac{u\left(x_k+ z_2(k)\right)}{u(x_k)} \nonumber\\
	  	 	& = \lim_{k\to\infty}  \mathbf{E}_{x_k+ z_2(k)}\left(\exp\left\{-\int_0^{\widetilde{\tau}_{x_k+z_1(k)}}f\left(u\left(\widetilde{\xi}_s\right)\right)\mathrm{d}s\right\}\right)  \frac{u\left(x_k +z_1(k)\right) }{u(x_k)}\nonumber\\
	  	 	& = \phi(y_1) \lim_{k\to\infty}  \mathbf{E}_{z_2(k)}\left(\exp\left\{-\int_0^{\widetilde{\tau}_{z_1(k)}}f\left(u\left(x_k +\widetilde{\xi}_s\right)\right)\mathrm{d}s\right\}\right).
	  	 \end{align}
Combining the scaling property of $\widetilde{\xi}$, \eqref{step_2} and \eqref{step_1}, we get that there exists $c>0$ such that
	  	 \begin{align}
	  	 	\phi(y_1) &  \geq \phi(y_2) \geq \phi(y_1) \lim_{k\to\infty}  \mathbf{E}_{z_2(k)}\left(\exp\left\{- c\int_0^{\widetilde{\tau}_{z_1(k)}}\left(u\left(x_k +\widetilde{\xi}_s\right)\right)^{\gamma-1}\mathrm{d}s\right\}\right)\nonumber\\
	  	 	& = \phi(y_1)\lim_{k\to\infty}  \mathbf{E}_{y_2 }\left(\exp\left\{- c\int_0^{u(x_k)^{-(\gamma-1)}\widetilde{\tau}_{y_1}}\left(u\left(x_k +\widetilde{\xi}_{s u(x_k)^{\gamma-1}} u(x_k)^{-\frac{\gamma-1}{\alpha}}\right)\right)^{\gamma-1}\mathrm{d}s\right\}\right)\nonumber\\
	  	 	& = \phi(y_1)\lim_{k\to\infty}  \mathbf{E}_{y_2 }\left(\exp\left\{- c\int_0^{\widetilde{\tau}_{y_1}}\left(\frac{u\left(x_k +\widetilde{\xi}_{s } u(x_k)^{-\frac{\gamma-1}{\alpha}}\right)}{u(x_k)}\right)^{\gamma-1}\mathrm{d}s\right\}\right).
	  	 \end{align}
	  	 Since $u\left(x_k +\widetilde{\xi}_{s } u(x_k)^{-\frac{\gamma-1}{\alpha}}\right)\leq u(x_k)$ for all $s\leq \widetilde{\tau}_{y_1}$, we get from the above inequality that
	  	 \begin{align}\label{step_3}
	  	 		\phi(y_1) &  \geq \phi(y_2) \geq \phi(y_1)  \mathbf{E}_{y_2 }\left(\exp\left\{- c\widetilde{\tau}_{y_1}\right\}\right) =  \phi(y_1)  \mathbf{E}_{1}\left(\exp\left\{- c(y_2-y_1)^\alpha\widetilde{\tau}_{0}\right\}\right),
	  	 \end{align}
	  	 where in the last equality we also used the scaling property of $\widetilde{\xi}$.
		 Therefore, for any $y> 0$ and  any positive rational number $y_1 \leq y<y_2$, we have
	  	 \begin{align}\label{step_4}
	  	 	\phi(y_2) & =\lim_{k\to\infty} \frac{u\left(x_k+ y_2 u(x_k)^{-\frac{\gamma-1}{\alpha}}\right)}{u(x_k)}\leq \liminf_{k\to\infty} \frac{u\left(x_k+ y u(x_k)^{-\frac{\gamma-1}{\alpha}}\right)}{u(x_k)}\nonumber\\
	  	 	& \leq \limsup_{k\to\infty} \frac{u\left(x_k+ y u(x_k)^{-\frac{\gamma-1}{\alpha}}\right)}{u(x_k)}\leq \lim_{k\to\infty} \frac{u\left(x_k+ y_1 u(x_k)^{-\frac{\gamma-1}{\alpha}}\right)}{u(x_k)}=\phi(y_1).
	  	 \end{align}
	  	 Combining \eqref{step_3} and \eqref{step_4}, we see that \eqref{step_7} holds for all $y\geq 0$.
	  	
	  	 Taking $y_1=0$ in \eqref{step_3}, we see that $\inf_{y\in [0,K]} \phi (y)>0$.
		 Therefore, using an argument similar to that leading to \cite[(3.9)]{HJRS2025}, we can get \eqref{step_5}.
	  	
	  	 Now we prove \eqref{PDE}. Since $u(x_k +\widetilde{\xi}_s)\leq u(x_k)$ for any $s\leq \widetilde{\tau}_{0}$, combining \eqref{step_2} and \eqref{FK-formula}, we see that for any $\varepsilon >0$,
	  		  	 \begin{align}
	  	 	\phi(y) &= \lim_{k\to\infty} \frac{u\left(x_k+ y u(x_k)^{-\frac{\gamma-1}{\alpha}}\right)}{u(x_k)} = \lim_{k\to\infty} \mathbf{E}_{y u(x_k)^{-\frac{\gamma-1}{\alpha}} }\left(\exp\left\{-\int_0^{\widetilde{\tau}_0}f\left(u\left(x_k +\widetilde{\xi}_s\right)\right)\mathrm{d}s\right\}\right)\nonumber\\
	  	 	& \geq \lim_{k\to\infty} \mathbf{E}_{y u(x_k)^{-\frac{\gamma-1}{\alpha}} }\left(\exp\left\{-C_2(\gamma)(1+\varepsilon)\int_0^{\widetilde{\tau}_0}\left(u\left(x_k + \widetilde{\xi}_s\right)\right)^{\gamma-1}\mathrm{d}s\right\}\right).
	  	 \end{align}
	  	Applying the  scaling property again,  we get from the above inequality that
	  	\begin{align}
	  		\phi(y) &\geq \lim_{k\to\infty} \mathbf{E}_{y}\left(\exp\left\{-C_2(\gamma)(1+\varepsilon)\int_0^{\widetilde{\tau}_0}\left(\frac{u\left(x_k +  \widetilde{\xi}_su(x_k)^{-\frac{\gamma-1}{\alpha}} \right)}{u(x_k)}\right)^{\gamma-1}\mathrm{d}s\right\}\right)\nonumber\\
	  		& = \mathbf{E}_{y}\left(\exp\left\{-C_2(\gamma)(1+\varepsilon)\int_0^{\widetilde{\tau}_0}\left(\phi(\widetilde{\xi}_s)\right)^{\gamma-1}\mathrm{d}s\right\} \right),
	  	\end{align}
	  	where in the last equality we used the dominated convergence theorem. Letting $\varepsilon\downarrow 0$, we get the lower bound. The proof of the upper bound is similar and we omit the details.
	  \end{proof}
	
As a consequence of Lemma \ref{l:lemma4}, we see that $\phi(y)\in (0, 1)$ for all $y>0$.
	
	  \begin{proposition}\label{prop1}
	  	  (i) $\phi$ is the unique solution of of \eqref{PDE}.
	  	
	  	  (ii) The equation
	  	  \begin{align}\label{Another-PDE}
	  	  		U(z) = \mathbf{E}_{z}\left(\exp\left\{-C_2(\gamma)\int_0^{\widetilde{\tau}_y}\left(U(\widetilde{\xi}_s)\right)^{\gamma-1}\mathrm{d}s\right\} \right)U(y),\quad z> y\geq 0
	  	  \end{align}
	  	  has a unique solution satisfying  the boundary conditions $\lim_{y\downarrow 0} U(y)=+\infty$ and $\lim_{y\to\infty} U(y)=0$.
	  \end{proposition}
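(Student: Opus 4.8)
The plan is to handle both parts through the log‑Laplace/exit‑measure calculus of the critical super‑$\alpha$‑stable process, using a comparison (maximum) principle for the semilinear equation $\cA v=C_2(\gamma)v^{\gamma}$ on $(0,\infty)$ as the common engine, where $\cA$ is the generator of $\widetilde{\xi}$. Throughout we exploit that $\widetilde{\xi}=-\xi$ is spectrally positive, so it creeps downwards and hence exits every half‑line $(y,\infty)$ continuously at the single point $y$ — this is exactly what makes \eqref{PDE} and \eqref{Another-PDE} clean Feynman--Kac identities. Moreover, by \eqref{step_2} the offspring generating function behaves like $C_2(\gamma)(1-s)^{\gamma}$ near $s=1$, so under the standard high‑density rescaling the branching $\alpha$‑stable process converges to the $(\widetilde{\xi},\psi)$‑superprocess $X$ with $\psi(\lambda)=C_2(\gamma)\lambda^{\gamma}$; I write $X_{(y,\infty)}$ for the exit measure of $X$ from $(y,\infty)$, $\langle 1,\cdot\rangle$ for total mass, and $\mathbf{E}_{\delta_z}$ for expectation under $X$ started from $\delta_z$. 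The natural solution class for \eqref{PDE} is $\{\phi:[0,\infty)\to[0,1]\}$.

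\emph{Part (i).} Any solution of \eqref{PDE} satisfies $\phi(0)=1$ (since $\widetilde{\tau}_0=0$ when $\widetilde{\xi}_0=0$) and, from the bound $\phi\le 1$, $\phi(y)\ge\mathbf{E}_y[e^{-C_2(\gamma)\widetilde{\tau}_0}]\to1$ as $y\downarrow0$, so $\phi$ is continuous up to $0$ with $\phi(0+)=1$; splitting $\widetilde{\tau}_0$ at a fixed time shows $\exp\{-C_2(\gamma)\int_0^{t\wedge\widetilde{\tau}_0}\phi(\widetilde{\xi}_s)^{\gamma-1}\,\mathrm{d}s\}\phi(\widetilde{\xi}_{t\wedge\widetilde{\tau}_0})$ is a bounded martingale, so $\phi$ solves $\cA\phi=C_2(\gamma)\phi^{\gamma}$ on $(0,\infty)$ in the mild sense and, by interior regularity for $\cA$ (bounded right‑hand side), classically there. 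A solution is provided by the superprocess, namely $\phi(y)=-\log\mathbf{E}_{\delta_y}\!\big(e^{-\langle 1,X_{(0,\infty)}\rangle}\big)$. For uniqueness, given two solutions $\phi_1,\phi_2$ put $w=\phi_1-\phi_2$; subtracting the equations gives $\cA w=c\,w$ on $(0,\infty)$ with $w(0+)=0$ and $w$ bounded, where $c:=C_2(\gamma)(\phi_1^{\gamma}-\phi_2^{\gamma})/(\phi_1-\phi_2)$ (read as $C_2(\gamma)\gamma\phi_1^{\gamma-1}$ where $\phi_1=\phi_2$). By the mean value theorem $0\le c\le C_2(\gamma)\gamma$ — it is crucial here that we keep the product structure $\lambda^{\gamma}=\lambda\cdot\lambda^{\gamma-1}$, since $\lambda^{\gamma-1}$ alone is only H\"older at $0$ — so Feynman--Kac uniqueness for the linear equation with the non‑negative bounded potential $c$ gives $w(y)=\mathbf{E}_y\big[\exp\{-\int_0^{\widetilde{\tau}_0}c(\widetilde{\xi}_s)\,\mathrm{d}s\}\,w(0)\big]=0$. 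Hence \eqref{PDE} has a unique solution, which is the subsequential limit $\phi$ of Lemma \ref{l:lemma4}, and it has the stated probabilistic representation.

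\emph{Part (ii), existence.} For $\theta>0$ set $U_{\theta}(z):=-\log\mathbf{E}_{\delta_z}\!\big(e^{-\theta\langle 1,X_{(0,\infty)}\rangle}\big)$. Dynkin's exit‑measure theory, applied with the subdomain $(y,\infty)$ (whose $\widetilde{\xi}$‑exit is the single point $y$ by spectral positivity), shows $U_{\theta}$ solves \eqref{Another-PDE} with the finite boundary value $U_{\theta}(0)=\theta$. Since $\theta\mapsto U_{\theta}$ is increasing, $U:=\lim_{\theta\uparrow\infty}U_{\theta}=-\log\mathbf{P}_{\delta_z}(\langle 1,X_{(0,\infty)}\rangle=0)$ exists, and monotone convergence in \eqref{Another-PDE} shows $U$ solves \eqref{Another-PDE}. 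The condition $U(0+)=+\infty$ follows at once from $U\ge U_{\theta}$ together with $U_{\theta}(0+)=\theta$ for every $\theta$. The condition $U(\infty)=0$ is where criticality enters: started far from the origin the total mass of $X$ is a critical $\gamma$‑stable continuous‑state branching process, which is extinct at an a.s.\ finite time, while the support of $X$ can approach $0$ only through the continuous downward part of the motion, so with probability tending to $1$ extinction occurs before the support reaches $0$, whence $X_{(0,\infty)}=0$ and $U(z)\to0$. (A scaling computation, using the $\alpha$‑stable scaling of $\widetilde{\xi}$ and the branching property of $X$, further shows every solution of \eqref{Another-PDE} with these boundary conditions equals $c^{\alpha/(\gamma-1)}U(c\cdot)$ for all $c>0$, so in fact $U(z)=U(1)z^{-\alpha/(\gamma-1)}$; carrying this through Lemma \ref{l:lemma4} and \eqref{FK-formula} is how the constant $C_3(\alpha,\beta,\gamma)$ of Theorem \ref{t2} gets its probabilistic meaning.)

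\emph{Part (ii), uniqueness, and the main obstacle.} Let $\widetilde{U}$ be any solution of \eqref{Another-PDE} with the prescribed boundary behaviour; by the interior regularity above $\widetilde{U}$ is continuous on $(0,\infty)$. Fix $\varepsilon,\varepsilon'>0$. Because $\cA$ is translation invariant and $(1+\varepsilon)^{\gamma-1}\ge1$, the function $z\mapsto(1+\varepsilon)\widetilde{U}(z-\varepsilon')$ is a supersolution of $\cA v=C_2(\gamma)v^{\gamma}$ on $(\varepsilon',\infty)$, with boundary value $(1+\varepsilon)\widetilde{U}(0+)=+\infty\ge U(\varepsilon')$ at $\varepsilon'$ and vanishing at $\infty$; the comparison principle on the half‑line therefore yields $U(z)\le(1+\varepsilon)\widetilde{U}(z-\varepsilon')$ for $z>\varepsilon'$, and letting $\varepsilon'\downarrow0$ then $\varepsilon\downarrow0$ gives $U\le\widetilde{U}$. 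Exchanging the roles of $U$ and $\widetilde{U}$ (using $U(0+)=+\infty$) gives $\widetilde{U}\le U$, so $U$ is the unique such solution. The genuinely nontrivial points in all of this are: the comparison principle itself for the non‑local $\cA$ — one needs interior regularity for the fractional‑Laplacian‑type generator so that solutions are smooth in $(0,\infty)$ and continuous up to $0$, after which the maximum principle for $\cA-c$ with $c\ge0$ bounded applies, the boundedness of the linearised potential again resting on the product structure of $\lambda^{\gamma}$; and the boundary identification $U(\infty)=0$, which requires the support/extinction estimate for the critical super‑$\alpha$‑stable process described above.
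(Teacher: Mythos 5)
Your overall strategy overlaps with the paper's on the representation side (both identify the unique solution with an exit-measure functional of the critical super-$\alpha$-stable process, and both exploit that $\widetilde{\xi}$ is spectrally positive so that the exit measure from $(y,\infty)$ sits on $\{y\}$), but your uniqueness arguments take a genuinely different route. The paper does not linearize or invoke any comparison principle: for (i) it observes that \eqref{PDE} is equivalent to Dynkin's integral equation $\phi(x)=1-\mathbf{E}_x\int_0^{\widetilde{\tau}_0}\varphi(\phi(\widetilde{\xi}_r))\,\mathrm{d}r$ with $\varphi(\lambda)=C_2(\gamma)\lambda^{\gamma}$, whose unique bounded positive solution is supplied by Dynkin's exit-measure theorem; for (ii) it fixes $y>0$, applies the same theorem on $(y,\infty)$ with the bounded boundary datum $U(y)$ to get $U(z)=-\log\mathbb{E}_{\delta_z}(\exp\{-U(y)X_{(y,\infty)}(\{y\})\})$, and then lets $y\downarrow 0$ using $U(0+)=\infty$ to pin down $U(z)=-\log\mathbb{P}_{\delta_z}(X_{(0,\infty)}(\{0\})=0)$. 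Note that in the additive Dynkin form the nonlinearity $\lambda\mapsto C_2(\gamma)\lambda^{\gamma}$ is Lipschitz on bounded sets, so uniqueness comes essentially for free; your insistence on the product structure $\lambda^{\gamma}=\lambda\cdot\lambda^{\gamma-1}$ is the multiplicative-form shadow of the same observation, but routing it through the differential equation $\cA\phi=C_2(\gamma)\phi^{\gamma}$ forces you to assert interior regularity, the It\^o/Dynkin formula for the non-local, non-symmetric generator, and a comparison principle on a half-line with blow-up at the boundary — all plausible and standard-looking, but each a nontrivial technical commitment that the paper's argument avoids entirely. Existence in (ii) is also handled differently: the paper obtains it from the construction of $U$ as a limit of $U^{(t_k)}$ in the proof of Theorem \ref{t2}, whereas you build it intrinsically from $U_{\theta}\uparrow U$.

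Two concrete gaps in your write-up. First, your proof that $U(\infty)=0$ is circular as stated: "with probability tending to $1$ extinction occurs before the support reaches $0$, whence $X_{(0,\infty)}=0$" is precisely the assertion $\mathbb{P}_{\delta_z}(X_{(0,\infty)}(\{0\})=0)\to 1$ that you are trying to prove, and no quantitative estimate is offered. Relatedly, you need $U(y)<\infty$ for every $y>0$ (equivalently $\mathbb{P}_{\delta_y}(X_{(0,\infty)}(\{0\})=0)>0$) both to pass the monotone limit $\theta\uparrow\infty$ through \eqref{Another-PDE} and to make the comparison argument meaningful, and this is never established. The clean fix is the scaling identity you relegate to a parenthesis: the $(\widetilde{\xi},\varphi)$-superprocess satisfies $U(cz)=c^{-\alpha/(\gamma-1)}U(z)$, so everything reduces to showing $0<U(1)<\infty$, and that should be proved (e.g.\ by comparison with the particle system via Lemma \ref{test-lemma}, or by a direct exit-measure estimate) rather than asserted. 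Second, in the comparison step for (ii) the linearized potential $C_2(\gamma)(v^{\gamma}-u^{\gamma})/(v-u)$ is \emph{not} bounded near $\varepsilon'$, since $v=(1+\varepsilon)\widetilde U(\cdot-\varepsilon')$ blows up there; your parenthetical claim that boundedness of the potential "again rest[s] on the product structure" is therefore not quite right in part (ii), and the maximum-principle argument has to be run at an interior negative minimum (which exists because the difference tends to $+\infty$ at $\varepsilon'$ and to $0$ at $\infty$) rather than via a global Feynman--Kac identity. Neither gap is fatal, but both need to be closed before the proof is complete.
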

	
The proof of Proposition \ref{prop1} is postponed to Section \ref{s:5}.

	  \begin{lemma}\label{test-lemma}
	  	 It holds that
	  	 \[
	  	 \limsup_{x\to\infty} x^{\frac{\alpha}{\gamma-1}} u(x) =  \limsup_{x\to\infty} x^{\frac{\alpha}{\gamma-1}} \mathbb{P}\left(M\geq x\right) <\infty.
	  	 \]
	  	 and that
	  	  \[
	  	 \liminf_{x\to\infty} x^{\frac{\alpha}{\gamma-1}} u(x) =  \liminf_{x\to\infty} x^{\frac{\alpha}{\gamma-1}} \mathbb{P}\left(M \geq x\right) >0.
	  	 \]
	  \end{lemma}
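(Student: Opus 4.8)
The plan is to prove the two assertions separately, in each case relying on the Feynman--Kac representation \eqref{FK-formula}, on the behaviour $f(u)=G(u)/u\sim C_2(\gamma)u^{\gamma-1}$ near $0$ (eq.\ \eqref{step_2}), and on the self-similarity of $\widetilde\xi$. The common mechanism is that $\beta^{*}:=\alpha/(\gamma-1)$ is the unique exponent for which a power law $x^{-\beta^{*}}$ is ``scale consistent'' with a Feynman--Kac functional whose potential is of order $u^{\gamma-1}$: if $u(z)\asymp z^{-\beta^{*}}$ then $u(z)^{\gamma-1}z^{\alpha}\asymp1$, so each dyadic level contributes an $O(1)$ amount to $\int_{0}^{\widetilde\tau_{0}}f(u(\widetilde\xi_{s}))\,\mathrm ds$.

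\emph{Lower bound.} One takes $u(x)=\mathbb P(M\ge x)\ge\mathbb P(N_{x^{\alpha}}\ge1)\cdot\mathbb P\big(\exists\,v\in N_{x^{\alpha}}:X_{v}(x^{\alpha})\ge x\ \big|\ N_{x^{\alpha}}\ge1\big)$. By \eqref{222}, $\mathbb P(N_{x^{\alpha}}\ge1)\asymp x^{-\beta^{*}}$, so it suffices to bound the conditional probability below by a constant. Since for $\gamma<2$ the offspring variance is infinite a plain Paley--Zygmund estimate is unavailable; instead I condition first on the genealogical tree, use that given the tree the displacement $X_{v}(x^{\alpha})$ has exactly the law of $(x^{\alpha})^{1/\alpha}$ times an $\alpha$-stable variable (the edge-lengths along any spine sum to $x^{\alpha}$, so this is tree-independent), so that $\mathbf E[\#\{v:X_{v}(x^{\alpha})\ge x\}\mid\text{tree}]=N_{x^{\alpha}}\,\mathbf P_{0}(\xi_{1}\ge1)$, and then estimate the conditional second moment via the many-to-two formula together with the $(\alpha-1)$-regular behaviour of the critical $\gamma$-stable genealogy. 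Conditional Paley--Zygmund then gives the claim. (Alternatively, a lower bound can be extracted from \eqref{integral equation}: since $G(u)\le\varepsilon u$ for small $u$, $u$ is almost a super-solution of $u(x)\ge\mathbf E_{0}[1_{S_{\mathbf e}<x}u(x-\xi_{\mathbf e})]$, which one iterates.)

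\emph{Upper bound.} First a crude bound: exactly as in the proof of Lemma~\ref{l6}, $u(x)\le\mathbb P(\underline M^{(t)}\ge x)+\mathbb P(N_{t}\ge1)\le\rho^{-1}\mathbf P_{0}(t^{1/\alpha}\xi_{1}\ge x)+\mathbb P(N_{t}\ge1)$ with $\rho=\mathbf P_{0}(\xi_{1}\ge0)\in(0,1)$; since $c_{+}=0$ the variable $\xi_{1}$ has all exponential moments by \eqref{Def-of-Upsilon}, so $\mathbf P_{0}(\xi_{1}\ge y)$ decays stretched-exponentially, while $\mathbb P(N_{t}\ge1)\asymp t^{-1/(\gamma-1)}$; choosing $t\asymp x^{\alpha}(\log x)^{-(\alpha-1)}$ yields $u(x)\le Cx^{-\beta^{*}}(\log x)^{(\alpha-1)/(\gamma-1)}$, in particular $u(x)\le C_{\theta}x^{-\theta}$ for every $\theta<\beta^{*}$. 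To remove the logarithm, I would run a bootstrap through \eqref{FK-formula}: writing $u(x)=\mathbf E_{x}\big[\exp\{-\int_{0}^{\widetilde\tau_{z_{0}}}f(u(\widetilde\xi_{s}))\,\mathrm ds\}\big]u(z_{0})$ (valid with $u(0)=1$; note $\widetilde\xi_{s}\ge z_{0}$ on the interval), one uses $f(u)\ge C_{2}(\gamma)(1-\varepsilon)u^{\gamma-1}$ together with the lower bound $u(y)\ge cy^{-\beta^{*}}$ just proved to get $u(x)\le u(z_{0})\,\mathbf E_{x}\big[\exp\{-\kappa\int_{0}^{\widetilde\tau_{z_{0}}}\widetilde\xi_{s}^{-\alpha}\,\mathrm ds\}\big]$ with $\kappa=C_{2}(\gamma)(1-\varepsilon)c^{\gamma-1}$; this is a Feynman--Kac functional of the self-similar process $\widetilde\xi$ with the scale-invariant potential $\kappa z^{-\alpha}$ killed below $z_{0}$, which decays like a power $x^{-\mu(\kappa)}$, and comparing with the opposite inequality $f(u)\le C_{2}(\gamma)(1+\varepsilon)u^{\gamma-1}$ and \eqref{Laplace-of-stopping-time} rules out an exponent larger than $\beta^{*}$, so the exponent is pinned at $\beta^{*}$ without log. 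Equivalently one may compare $u$, via the Feynman--Kac formulation and a maximum-principle argument, with a suitably rescaled version of the explosive solution $U$ of \eqref{Another-PDE} furnished by Proposition~\ref{prop1}(ii), which satisfies $U(z)\asymp z^{-\beta^{*}}$; then $u\le$ const$\cdot U(\cdot)$ gives $\limsup x^{\beta^{*}}u(x)<\infty$.

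The main obstacle is precisely the elimination of this logarithmic factor in the upper bound. The difficulty is that the first moment $\mathbf E_{0}[\#\{v\in N_{t}:X_{v}(t)\ge x\}]=\mathbf P_{0}(\xi_{t}\ge x)$ is of order one as soon as $t\asymp x^{\alpha}$, so the elementary first-moment/extinction estimate cannot see that reaching level $x$ is essentially no more costly than mere survival up to time $\asymp x^{\alpha}$; capturing this requires the Feynman--Kac representation and a careful bookkeeping of the constant $C_{2}(\gamma)$ against the occupation-time asymptotics of the spectrally positive stable process $\widetilde\xi$, which is exactly where the non-degeneracy of the rescaled profile $\phi\in(0,1)$ from Lemma~\ref{l:lemma4} and the explosive solution of Proposition~\ref{prop1} enter — indeed, a surviving slowly varying correction $L(x)\to\infty$ (resp.\ $L(x)\to0$) in $u(x)=x^{-\beta^{*}}L(x)$ would force the rescaled limit in \eqref{step_7} to be identically $1$ (resp.\ identically $0$), contradicting $\phi(y)\in(0,1)$ for $y>0$.
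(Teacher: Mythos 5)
Your route is genuinely different from the paper's, but it has gaps at exactly the hard points. The decisive one is the upper bound. The one-shot bootstrap through \eqref{FK-formula} with the potential bounded below by $\kappa\,\widetilde{\xi}_s^{-\alpha}$, $\kappa=C_2(\gamma)(1-\varepsilon)c^{\gamma-1}$, only gives $u(x)\le C x^{-\mu(\kappa)}$, where the decay exponent $\mu(\kappa)$ of that killed Feynman--Kac functional depends monotonically on $\kappa$, hence on the (possibly small) liminf constant $c$; nothing forces $\mu(\kappa)\ge \alpha/(\gamma-1)$, and iterating does not improve matters because the input is the fixed lower bound on $u$, not an updated one. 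The claim that the opposite inequality $f(u)\le C_2(\gamma)(1+\varepsilon)u^{\gamma-1}$ "pins the exponent" confuses a statement about exponents with the needed bound on $\limsup x^{\alpha/(\gamma-1)}u(x)$: if the lemma fails, $w(x)=x^{\alpha/(\gamma-1)}u(x)$ need not be regularly varying at all — it can oscillate unboundedly or down to $0$ — so your closing remark, which only excludes a failure of the form $u(x)=x^{-\alpha/(\gamma-1)}L(x)$ with $L$ slowly varying, does not cover the actual danger. The alternative comparison with the explosive solution $U$ of \eqref{Another-PDE} is circular relative to the paper's development: Proposition \ref{prop1}(ii) is a uniqueness statement, the two-sided bound $U(z)\asymp z^{-\alpha/(\gamma-1)}$ is only obtained in the proof of Theorem \ref{t2} from \eqref{Rough-bound}, i.e.\ from the very lemma you are proving, and multiplying $U$ by a constant does not produce a solution of the nonlinear equation, so the "maximum-principle" comparison would itself have to be built. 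The paper avoids all of this: it uses the locally uniform convergence \eqref{step_5} to the profile $\phi$ with $\phi(y)\in(0,1)$ (Lemma \ref{l:lemma4}, upgraded from subsequential to full convergence via the uniqueness in Proposition \ref{prop1}(i), as in \eqref{step_6})), and then excludes $A=\infty$, $B=\infty$ and $A=0$ for $w$ by contradiction along carefully chosen level-crossing sequences ($b_k$; $a_k,d_k,a_k^*$; $h_k,j_k,h_k^*$) — precisely the oscillation analysis your argument lacks, and with no need for a crude a priori bound.

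The lower bound sketch also has a real hole. The factorisation through survival is the right heuristic and \eqref{222} gives the correct order $x^{-\alpha/(\gamma-1)}$ for $\mathbb{P}(N_{x^\alpha}\ge 1)$, but the conditional Paley--Zygmund step is not an argument as written: for $\gamma<2$ the offspring variance is infinite, so the annealed pair intensity $\mathbb{E}[N_t(N_t-1)]$ is infinite and the many-to-two formula you invoke is not available; conditioning on the tree merely makes the second moment tree-dependent, and you would still need a Yaglom-type statement that, conditionally on survival, the tree is "good" (population of order $t^{1/(\gamma-1)}$, typical coalescence times of order $t$) with probability bounded away from zero — this is nontrivial, is not in the paper, and is not supplied by the phrase about the "$(\alpha-1)$-regular behaviour of the critical $\gamma$-stable genealogy". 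The parenthetical alternative of iterating \eqref{integral equation} is likewise not carried out. In the paper the positivity of the liminf is obtained by the same profile-plus-contradiction machinery (using the bound $\phi(y)\ge e^{-c_1y}$ coming from \eqref{step_3} and \eqref{Laplace-of-stopping-time}), so no branching-tree moment estimates are needed there either.
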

	  \begin{proof}
	  	Define $w(x)= x^{\frac{\alpha}{\gamma-1}} u(x)$ and set $A= \liminf_{x\to +\infty} w(x)$ and $B= \limsup_{x\to\infty} w(x)$.  It suffices to show that $0<A \leq B<\infty$.
	  	
	  	 Combining \eqref{step_5} and Proposition \ref{prop1},  we get that, for any $K>0$,
	  	  \begin{align}\label{step_6}
	  	  		0 & = \lim_{x\to+\infty} \sup_{y\in [0,K]} \left|  \frac{u\left(x+ y u(x)^{-\frac{\gamma-1}{\alpha}}\right)}{ \phi (y)u(x)} -1\right| \nonumber\\
	  	  		&=\lim_{x\to\infty} \sup_{y\in [0,K]} \left|  \frac{w\left(x \left(1+ y w(x)^{-\frac{\gamma-1}{\alpha}}\right)\right)}{ \phi (y)w(x) \left(1+ yw(x)^{-\frac{\gamma-1}{\alpha}}\right)^{\frac{\alpha}{\gamma-1}}} -1\right| .
	  	  \end{align}
	  	  First we show that $A<\infty$. If $A=\infty$, we define $b_k:= \sup\left\{x: w(x)< k\right\}$. Then $b_k\to\infty$ as $k\to\infty$.
		  Using the the definition of $b_k$ and the left-continuity of $w$, we get that $w(b_k)\leq k\leq \inf_{z>b_k} w(z)$.
		  Therefore, taking $x=b_k$ in \eqref{step_6}, we get that for any $y>0$,
	  	  \begin{align}
	  	  	1& = \lim_{k\to\infty} \frac{w\left(b_k \left(1+ y w(x)^{-\frac{\gamma-1}{\alpha}}\right)\right)}{ \phi (y)w(b_k) \left(1+ yw(b_k)^{-\frac{\gamma-1}{\alpha}}\right)^{\frac{\alpha}{\gamma-1}}} \geq  \lim_{k\to\infty} \frac{k}{ \phi (y) \left(w(b_k)^{\frac{\gamma-1}{\alpha}}+ y\right)^{\frac{\alpha}{\gamma-1}}}\nonumber\\
	  	  	& \geq  \lim_{k\to\infty} \frac{k}{ \phi (y) \left(k^{\frac{\gamma-1}{\alpha}}+ y\right)^{\frac{\alpha}{\gamma-1}}} = \frac{1}{\phi(y)},
	  	  \end{align}
	  	  which is a contraction to
		  Lemma \ref{l:lemma4}.
		  Hence $A<\infty$. Similarly we can show that $B>0$.
	  	
	  	  Now we show that $B<\infty$. Assume that $A<B=\infty$.
		  Note that for any $K>0$,
	  	  \begin{align}
	  	  	 \lim_{A_1 \to\infty} \phi(K) \left(1+ K A_1^{-\frac{\gamma-1}{\alpha}}\right)^{\frac{\alpha}{\gamma-1}} = \phi(K)<1.
	  	  \end{align}
	  	  Therefore, we may fix an $A_1>A$ and an $\varepsilon>0$ such that
	  	  \begin{align}\label{step_10}
	  	  (1+\varepsilon)	\phi(K) \left(1+ K A_1^{-\frac{\gamma-1}{\alpha}}\right)^{\frac{\alpha}{\gamma-1}} <1.
	  	  \end{align}
	  	   Fix another $B_1>A_1$. Define
	  	  \begin{align}
	  	  	a_1& := \inf\left\{x>0: w(x) <A_1\right\},\quad d_1:= \inf\left\{x>a_1: w(x) >B_1+1\right\},\nonumber\\
	  	  	a_k &:= \inf\left\{x>d_{k-1}: w(x)< A_1 \right\},\quad d_k:=\inf\left\{x>a_k: w(x)> B_1+k\right\},\nonumber\\
	  	  	a_k^*&:= \sup\left\{x\in [a_k, d_k]: w(x)< A_1\right\}.
	  	  \end{align}
	  	  By \eqref{step_6}, for any $\varepsilon, K>0$ satisfying \eqref{step_10}, there exists $N>0$ such that  when $x>N$.
	  	  \begin{align}\label{step_9}
	  	  	\sup_{y\in [0,K]} \left|  \frac{w\left(x \left(1+ y w(x)^{-\frac{\gamma-1}{\alpha}}\right)\right)}{ \phi (y)w(x) \left(1+ yw(x)^{-\frac{\gamma-1}{\alpha}}\right)^{\frac{\alpha}{\gamma-1}}} -1\right|  <\varepsilon.
	  	  \end{align}
	  	  By the left continuity of $w$, taking $x=a_k^*$ in \eqref{step_9}, we see that when $k$ is large enough, for all $y\in [0, K]$,
	  	  \begin{align}\label{step_11}
	  	  	w\left(a_k^* \left(1+ y w(a_k^*)^{-\frac{\gamma-1}{\alpha}}\right)\right) & \leq (1+\varepsilon) \phi (y)w(a_k^*) \left(1+ yw(a_k^*)^{-\frac{\gamma-1}{\alpha}}\right)^{\frac{\alpha}{\gamma-1}} \nonumber\\
	  	  	& = (1+\varepsilon) \phi (y) \left(w(a_k^*)^{\frac{\gamma-1}{\alpha}}+ y\right)^{\frac{\alpha}{\gamma-1}} \leq (1+\varepsilon) \phi(y) \left(A_1^{\frac{\gamma-1}{\alpha}}+ K\right)^{\frac{\alpha}{\gamma-1}} \nonumber\\
	  	  	&<B_1+k.
	  	  \end{align}
	  	  Therefore, we see that when $k$ is large enough,
	  	  \begin{align}\label{step_12}
	  	  	  \left\{  a_k^* \left(1+ y w(a_k^*)^{-\frac{\gamma-1}{\alpha}}\right): y\in [0, K]\right\} \subset [a_k^*, d_k] \quad \Longrightarrow \quad w\left(a_k^* \left(1+ K w(a_k^*)^{-\frac{\gamma-1}{\alpha}}\right)\right)  \geq A_1.
	  	  \end{align}
	  	  However, combining \eqref{step_10} and \eqref{step_11}, we have
	  	  \begin{align}
	  	  	w\left(a_k^* \left(1+ K w(a_k^*)^{-\frac{\gamma-1}{\alpha}}\right)\right)  \leq (1+\varepsilon) \phi(K)  \left(A_1^{\frac{\gamma-1}{\alpha}}+ K\right)^{\frac{\alpha}{\gamma-1}}  < A_1,
	  	  \end{align}
	  	  which contradicts \eqref{step_12}. Therefore,  $B<\infty$.
	  	
	  	  Now we prove $A>0$.  If $A=0<B$,
		  then
		  combining \eqref{step_3} and \eqref{Laplace-of-stopping-time}, we see that for any $y>0$,
	  	  \begin{align}
	  	  	 \phi(y)\geq \mathbf{E}_0\left( \exp\left\{-c y^\alpha \widetilde{\tau}_{-1} \right\}\right)= e^{-c_1 y}
	  	  \end{align}
	  	  for some constant $c_1>0$, where $c$ is the constant in  \eqref{step_3}. Note that for any $B_2>0$,
	  	  \begin{align}
	  	  	  \phi(y)^{\frac{\gamma-1}{\alpha}} \left(1+y B_2^{-\frac{\gamma-1}{\alpha}}\right) & \geq e^{-c_1 \frac{\gamma-1}{\alpha} y} \left(1+y B_2^{-\frac{\gamma-1}{\alpha}}\right) \geq \left(1-c_1 \frac{\gamma-1}{\alpha} y \right)\left(1+y B_2^{-\frac{\gamma-1}{\alpha}}\right) \nonumber\\
	  	  	  & = 1+ \left( B_2^{-\frac{\gamma-1}{\alpha}} -
	  	  	  c_1
	  	  	  \frac{\gamma-1}{\alpha} \right) y -c_1 \frac{\gamma-1}{\alpha}B_2^{-\frac{\gamma-1}{\alpha}}y^2.
	  	  \end{align}
	  	  Thus we  can take $B_2$ and $K$ sufficiently small so that for all $y\in (0, K]$,
	  	  \[
	  	  \phi(y)^{\frac{\gamma-1}{\alpha}} \left(1+y B_2^{-\frac{\gamma-1}{\alpha}}\right)>1.
	  	  \]
		  Let $B_2$ and $K$ be chosen as above and fix $y\in (0, K]$. Then there exists $\varepsilon>0$ such that
	  	    \begin{align}\label{step_10'}
	  	  (1-\varepsilon) \phi (y)\left(1+ yB_2^{-\frac{\gamma-1}{\alpha}}\right)^{\frac{\alpha}{\gamma-1}} >1.
	  	  \end{align}
		  Take $N$ large enough so that $N^{-1}< B_2$ and define
	  	   \begin{align}
	  	  	 h_1& := \inf\left\{x>0: w(x) >B_2 \right\},\quad j_1:= \inf\left\{x>h_1: w(x) < \frac{1}{N+1} \right\},\nonumber\\
	  	  	 h_k &:= \inf\left\{x>j_{k-1}: w(x)> B_2 \right\},\quad j_k:=\inf\left\{x>h_k: w(x)< \frac{1}{N+k}\right\},\nonumber\\
	  	  	 h_k^*&:= \sup\left\{x\in [h_k, j_k]: w(x)>B_2\right\}.
	  	  \end{align}
	  	  Combining \eqref{step_9} and the left continuity of $w$,  we see that $w(h_k^*)\geq B_2$ and that,  when $k$ is large enough, for any $y\in [0, K]$,
	  	   \begin{align}\label{step_13}
	  	  	w\left(h_k^* \left(1+ y w(h_k^*)^{-\frac{\gamma-1}{\alpha}}\right)\right) & \geq (1-\varepsilon) \phi (y)w(h_k^*) \left(1+ yw(h_k^*)^{-\frac{\gamma-1}{\alpha}}\right)^{\frac{\alpha}{\gamma-1}} \nonumber\\
	  	  	& = (1-\varepsilon) \phi (y) \left(w(h_k^*)^{\frac{\gamma-1}{\alpha}}+ y\right)^{\frac{\alpha}{\gamma-1}} \geq (1-\varepsilon)  \phi(K)B_2>\frac{1}{k+N},
	  	  \end{align}
	  	  which implies that
	  	    \begin{align}\label{step_12'}
	  	  	\left\{  h_k^* \left(1+ y w(h_k^*)^{-\frac{\gamma-1}{\alpha}}\right): y\in [0, K]\right\} \subset [h_k^*, j_k] \quad \Longrightarrow \quad w\left(h_k^* \left(1+ K w(h_k^*)^{-\frac{\gamma-1}{\alpha}}\right)\right)  \leq  B_2.
	  	  \end{align}
	  	  However, combining \eqref{step_10'} and \eqref{step_13}, we get
	  	   \begin{align}\label{step_14}
	  	  	w\left(h_k^* \left(1+ K w(h_k^*)^{-\frac{\gamma-1}{\alpha}}\right)\right) & \geq (1-\varepsilon) \phi (K)B_2\left(1+ KB_2^{-\frac{\gamma-1}{\alpha}}\right)^{\frac{\alpha}{\gamma-1}} >B_2,
	  	  \end{align}
	  	  which contradicts \eqref{step_12'}. Therefore, $A>0$ and the proof is copmplete.
	  \end{proof}

	  Now we are ready to prove Theorem \ref{t2}.
	
	  \begin{proof}[Proof of Theorem \ref{t2}]
	  	  Define $U^{(x)}(y):= x^{\frac{\alpha}{\gamma-1}}u(xy)$, then it follows
		  from Lemma \ref{test-lemma} that for some constant $\gamma_1, \gamma_2$ and $A$, it holds that
	  	  \begin{align}\label{Rough-bound}
	  	  	 \frac{\gamma_1}{y^{\frac{\alpha}{\gamma-1}}}\leq U^{(x)}(y)\leq \frac{\gamma_2}{y^{\frac{\alpha}{\gamma-1}}},\quad xy\geq A.
	  	  \end{align}
		  It follows from \eqref{step_2} that for any $u_0\in (0, 1)$,
		  $f(u)/u^{\gamma-1}\leq c$ for all $u\in [0, u_0]$ for some positive constant $c$.
	  	  Fixing a $y_0>0$. Then by \eqref{FK-formula}, when $x> A/y_0$,  we see that for any $z>y>y_0$, under $\mathbf{P}_{xz}$, we have $u(\widetilde{\xi}_s)\leq \frac{\gamma_2}{\widetilde{\xi}_s^{\frac{\alpha}{\gamma-1}}}\leq \frac{\gamma_2}{(xy_0)^{\frac{\alpha}{\gamma-1}}}$ on the set $\{s< \widetilde{\tau}_{xy}\}$. Therefore, by \eqref{Laplace-of-stopping-time},
	  	  for any $y_0<y<z$,
	  	  \begin{align}\label{step_16}
	  	  	 U^{(x)}(z) & =  \mathbf{E}_{xz}\left(\exp\left\{-\int_0^{\widetilde{\tau}_{xy}}f\left(u\left(\widetilde{\xi}_s\right)\right)\mathrm{d}s\right\}\right) U^{(x)}(y)\nonumber\\
	  	  	 & \geq \mathbf{E}_{xz}\left(\exp\left\{- c\int_0^{\widetilde{\tau}_{xy}}\left(u\left(\widetilde{\xi}_s\right)\right)^{\gamma-1}\mathrm{d}s\right\}\right) U^{(x)}(y)\nonumber\\
	  	  	 &\geq \mathbf{E}_{xz}\left(\exp\left\{- \frac{c\gamma_2^{\gamma-1}}{(xy_0)^{\alpha}}\widetilde{\tau}_{xy} \right\}\right) U^{(x)}(y)= \mathbf{E}_{1}\left(\exp\left\{- \frac{c\gamma_2^{\gamma-1}}{(y_0)^{\alpha}}(z-y)^\alpha\widetilde{\tau}_{0} \right\}\right) U^{(x)}(y)\nonumber\\
	  	  	 & = e^{-c_1 (z-y)} U^{(x)}(y),
	  	  \end{align}
for some positive constant	 $c_1$. 	
	  	  Therefore, combining \eqref{Rough-bound} and \eqref{step_16}, we see that when $x>A/y_0$, for any $y_0<y<z$,
	  	  \begin{align}\label{step_15}
	  	  	 \left| U^{(x)}(z) -U^{(x)}(y) \right| = U^{(x)}(y) -U^{(x)}(z)\leq U^{(x)}(y)\left( 1- e^{-c_1(z-y)}\right) \leq \frac{\gamma_2}{y_0^{\frac{\alpha}{\gamma-1}}} c_1 |z-y|.
	  	  \end{align}
	  	  Therefore,  combining \eqref{Rough-bound} and a diagonalization argument,  we can find sequence from $\{t_k: k\geq1\}\subset (0,\infty)$ with $t_k\to\infty$ such that
	  	  \begin{align}\label{Limit2}
	  	  	 U(y)= \lim_{k\to\infty} U^{(t_k)}(y),\quad \mbox{for all}\quad y\in \mathbb{Q}\cap (0,\infty).
	  	  \end{align}
		  Moreover, using a standard argument (for example, see \cite[Lemma 3.1]{HRS2024}) and with the help of \eqref{step_15}, one can show that \eqref{Limit2} holds for all $y>0$.
		taking  $x=t_k$ in \eqref{Rough-bound} and letting $k\to\infty$, we  see that
		$U(y)$ is comparable to $y^{-\frac{\alpha}{\gamma-1}}$, which implies that $\lim_{y\downarrow 0} U(y)=\infty$ and that $\lim_{y\to\infty} U(y)=0$.

	  	  Let $z>y\geq y_0$. Since $u(\widetilde{\xi}) \leq u(xy)$ for all $s\leq \widetilde{\tau}_{xy}$ under $\mathbf{P}_{xz}$, we get from  \eqref{step_2} that for any $\varepsilon, y_0>0$, when $x$ is large enough, we have,
	  	  \begin{align}
	  	  		 U^{(x)}(z) & \geq  \mathbf{E}_{xz}\left(\exp\left\{-C_2(\gamma)(1+\varepsilon)\int_0^{\widetilde{\tau}_{xy}}\left(u\left(\widetilde{\xi}_s\right)\right)^{\gamma-1}\mathrm{d}s\right\}\right) U^{(x)}(y)\nonumber\\
	  	  	&  = \mathbf{E}_{z}\left(\exp\left\{-C_2(\gamma)(1+\varepsilon)\int_0^{x^\alpha\widetilde{\tau}_{y}}\left(u\left(x\widetilde{\xi}_{s/x^\alpha}\right)\right)^{\gamma-1}\mathrm{d}s\right\}\right) U^{(x)}(y)\nonumber\\
	  	  	& = \mathbf{E}_{z}\left(\exp\left\{-C_2(\gamma)(1+\varepsilon)\int_0^{\widetilde{\tau}_{y}} x^\alpha\left(u\left(x\widetilde{\xi}_{s}\right)\right)^{\gamma-1}\mathrm{d}s\right\}\right) U^{(x)}(y) \nonumber\\
	  	  	& = \mathbf{E}_{z}\left(\exp\left\{-C_2(\gamma)(1+\varepsilon)\int_0^{\widetilde{\tau}_{y}} \left(U^{(x)}\left(\widetilde{\xi}_{s}\right)\right)^{\gamma-1}\mathrm{d}s\right\}\right) U^{(x)}(y) .
	  	  \end{align}
	  	  Taking $x=t_k$ in the above inequality and then letting $k\to\infty$, we get that
	  	  \begin{align}
	  	  	 U(z) &\geq \mathbf{E}_{z}\left(\exp\left\{-C_2(\gamma)(1+\varepsilon)\int_0^{\widetilde{\tau}_{y}} \left(U\left(\widetilde{\xi}_{s}\right)\right)^{\gamma-1}\mathrm{d}s\right\}\right) U(y) \nonumber\\
	  	  	 & \stackrel{\varepsilon\to 0}{\longrightarrow}\mathbf{E}_{z}\left(\exp\left\{-C_2(\gamma)\int_0^{\widetilde{\tau}_{y}} \left(U\left(\widetilde{\xi}_{s}\right)\right)^{\gamma-1}\mathrm{d}s\right\}\right) U(y).
	  	  \end{align}
	  	  Similarly, we also have for any $y_0\leq y<z$,
	  	  \begin{align}
	  	  	  U(z) &\leq \mathbf{E}_{z}\left(\exp\left\{-C_2(\gamma)\int_0^{\widetilde{\tau}_{y}} \left(U\left(\widetilde{\xi}_{s}\right)\right)^{\gamma-1}\mathrm{d}s\right\}\right) U(y).
	  	  \end{align}
	  	  Therefore, $U$ is the solution of the equation  in Proposition \ref{prop1}(ii). Now the constant $C_3(\alpha, \beta,\gamma)= U(1)$ follows immediately, we are done.
	  \end{proof}
	
	  \section{Proof of Theorem \ref{t3}}\label{s:5}

	  Repeating the proof of \cite[Lemma 2.3]{HJRS2025}, we can get that, in the case $m<1$,
	  $u$ has the following representation:
	  	 \begin{align}\label{FK-formula-sub}
	  		u(x)= \mathbf{E}_x\left(\exp\left\{- (1-m)\widetilde{\tau}_y-\int_0^{\widetilde{\tau}_y}f_{sub}\left(u\left(\widetilde{\xi}_s\right)\right)\mathrm{d}s\right\}\right) u(y),\quad x>y\geq 0
	  \end{align}
	  where
	  	\[
	  f_{sub}(u) = \frac{\sum_{k=0}^\infty p_k (1-u)^k -(1-m)(1-u)}{u}
	  \]
	  is a continuous function of $u\in (0,1]$ with $\lim_{u\to 0+}f_{sub}(u)=0$.  Moreover, by \cite[Lemma 2.7]{HRSZ2025}, $f_{sub}$ is increasing in $u$, and
that if $\sum_{k=0}^\infty k(\log k)p_k<\infty$,  then  for any $c>0$, $\int_0^\infty f_{sub} \left(e^{-ct} \right)\mathrm{d} t<\infty$, which is equivalent to
	  \begin{align}\label{Inte-of-f-sub}
		 \sum_{n=1}^\infty f_{sub} \left(e^{-cn} \right)<\infty.
	  \end{align}
	
	  \begin{proof}[Proof of Theorem \ref{t3}]
	  	  For simplicity, define $a_0:= \left((1-m)/C_1(\alpha)\right)^{1/\alpha}.$ According to \eqref{Laplace-of-stopping-time}, we have  for any $x>y\geq 0$,
	  	  \[
	  	  u(x)\leq  \mathbf{E}_x\left(\exp\left\{- (1-m)\widetilde{\tau}_y\right\}\right) u(y) = e^{-a_0(x-y)}u(y).
	  	  \]
	  	  Therefore, we see that $e^{a_0x} u(x)$ is decreasing in $x$ and that $\lim_{x\to\infty} e^{a_0x} u(x)\in [0,1].$ Thus, it remains to show that the limit is positive.
	  	
	  	  Taking $x=n+1, y=n$ in \eqref{FK-formula-sub}, we see that
	  	  \begin{align}
	  	  	u(n+1) = \mathbf{E}_{n+1}\left(\exp\left\{- (1-m)\widetilde{\tau}_n-\int_0^{\widetilde{\tau}_n}f_{sub}\left(u\left(\widetilde{\xi}_s\right)\right)\mathrm{d}s\right\}\right) u(n).
	  	  \end{align}
	  	  Since under $\mathbf{P}_{n+1}$, on the set $s<\widetilde{\tau}_n$, we have $\widetilde{\xi}_s \geq n$. Therefore, by the monotonicities of $f_{sub}$ and $u$, we conclude from the above identity that
	  	    \begin{align}\label{lower-of-u-n}
	  	  	u(n+1) & \geq  \mathbf{E}_{n+1}\left(\exp\left\{- (1-m)\widetilde{\tau}_n-\int_0^{\widetilde{\tau}_n}f_{sub}\left(u\left(n\right)\right)\mathrm{d}s\right\}\right) u(n)\nonumber\\
	  	  	& \geq \mathbf{E}_{n+1}\left(\exp\left\{- \left(1-m+f_{sub}\left(e^{-a_0n}\right)\right)\widetilde{\tau}_n\right\}\right) u(n)\nonumber\\
	  	  	& = \exp\left\{- H\left( 1-m+f_{sub}\left(e^{-a_0n}\right)\right)\right\} u(n),
	  	  \end{align}
	  	  where $H(a):= (a/C_1(\alpha))^{1/\alpha}$.
	  	  Noticing that for $\alpha\in (1,2)$, by Taylor's expansion,  we have
	  	  \[
	  	  H(1-m+v)= a_0+ H'(1-m) v+ O(v^2), \quad v\to0.
	  	  \]
	  	  Therefore, there exists $C>0$ such that for all $v\in (0,1)$, $ H(1-m+v)\leq  a_0+C v$.
Combining this inequality with \eqref{Inte-of-f-sub} and \eqref{lower-of-u-n},
we conclude that
	  	  \begin{align}
	  	  	 e^{a_0(n+1)}	u(n+1) & \geq   e^{a_0n} u(n) \exp\left\{- Cf_{sub}\left(e^{-a_0n}\right)\right\} \nonumber\\
	  	  	 &\geq \cdots \geq u(0) \exp\left\{-C\sum_{k=0}^n f_{sub}
                 \left(e^{-a_0k}\right) \right\}\nonumber\\
	  	  	 &\geq \exp\left\{-C\sum_{k=0}^\infty f_{sub}\left(e^{-a_0n}\right) \right\}>0,
	  	  \end{align}
	  	  which implies the desired result.
	  \end{proof}

	  \section{Proof of Proposition \ref{prop1}}\label{s:6}
	
	  The proof of Proposition \ref{prop1} relies heavily on another important Markov process: super $\alpha$-stable process. We will briefly introduce this process and some known results.
	
	  Let $\mathcal{M}_F(\mathbb R)$ be the families of finite Borel measures on $\mathbb R$. We will use $\mathbf{0}$ to denote  the null measure on $\mathbb R$. Let $B_b(\mathbb R)$ and $B_b^+(\mathbb R)$ be the spaces of bounded Borel functions and non-negative bounded Borel functions on $\mathbb R$ respectively.
	  For any $f\in B_b(\mathbb R)$ and $\mu\in \mathcal{M}_F(\mathbb R)$,
	  we use $\langle f, \mu\rangle$ to denote the integral of $f$ with respect to $\mu$. For any $\alpha\in (1, 2]$, the function
	  \begin{align}\label{Stable-Branching-mechanism}
	  	\varphi(\lambda) := 	C_2(\gamma) \lambda^\gamma
	  \end{align}
	    is a branching mechanism.
	
	   For any $\mu \in \mathcal{M}_F(\mathbb R)$,  we  use
	  $X=\{(X_t)_{t\geq 0}; \mathbb P_\mu\}$
	  to denote a super $\alpha$-stable process with spatial motion $\widetilde{\xi}$ and branching mechanism $\varphi$, that is, an $\mathcal{M}_F(\mathbb R)$-valued Markov process such that for any $f\in B_b^+(\mathbb R)$,
	  $$
	  -\log  \mathbb{E}_{\mu} \left(\exp\left\{ -\langle f, X_{t} \rangle\right\}\right)=
\langle v_f(t, \cdot), \mu\rangle,
	  $$
 where $(t, x)\mapsto v_f(t, x)$
is the unique locally bounded non-negative solution to
	  \begin{align}\label{Evolution-cumulant-semigroup}
	  	v_f(t, x)&=\mathbf{E}_x\left(f(\widetilde{\xi}_t) \right)- \mathbf{E}_y\Big(\int_0^t \varphi\left( v_f(t-s,\widetilde{\xi}_s)\right)\mathrm{d}s\Big).
	  \end{align}

  According to Dynkin \cite{E.B1.}, for any open set $Q$ of $\mathbb R$, there corresponds a  random measure $X_Q$ such that,
		  $\mu\in \mathcal{M}_F(\mathbb R)$ with $\textup{supp}\  \mu \subset Q$, and any  $f\in B_b^+(\mathbb R)$,
	  $$
	  \mathbb{E}_\mu \left(\exp\left\{-\langle f, X_Q \rangle\right\}\right) = \exp\big\{-\langle v_f^{Q} , \mu \rangle \big\},
	  $$
	  where
	  $v^{Q}_f(x)$
	  is the unique positive solution of the equation
	  \begin{align}\label{Equation-exit-measure}
	  	v_f^{Q} (x) & = \mathbf{E}_{x} \left(f(\widetilde{\xi}_\tau)\right)- \mathbf{E}_{x} \int_0^{\tau_Q} \varphi\big(v_f^{Q}(\widetilde{\xi}_r)\big)\mathrm{d} r,
	  \end{align}
	  with $\tau_Q:= \inf\left\{ r: \widetilde\xi_r \notin Q\right\}$.
	
	  \begin{proof}[Proof of Proposition \ref{prop1}]
Taking $Q= (0, \infty)$
and $f=1$ in \eqref{Equation-exit-measure},
we see that $\phi(x)$ is  the unique bounded solution of the following equation:
	  	 \begin{align}
	  	 		\phi(x)& = 1- \mathbf{E}_{x} \int_0^{\widetilde{\tau}_0}\varphi\big(\phi( \widetilde{\xi}_r)\big)\mathrm{d} r.
	  	 \end{align}
	  	 Since the above equation is equivalent to \eqref{PDE}, we complete the proof of (i).
	  	
	  	 Now we turn to the proof of (ii). Similarly, let $U$ be an solution to \eqref{Another-PDE} with boundary condition $U(0+)=\infty$ and $U(\infty)=0$. Noticing that for each fixed $y$,  \eqref{Another-PDE} is equivalent to
	  	  \begin{align}
	  	 	U(z)& = U(y)- \mathbf{E}_{z} \int_0^{\widetilde{\tau}_y}\varphi\big(U( \widetilde{\xi}_r)\big)\mathrm{d} r, \quad z>y>0.
	  	 \end{align}
	  	 Therefore, since $\widetilde{\xi}$ is spectrally positive,  we see that for $Q= (y,\infty)$,
is supported on $\{y\}$.
Therefore, from \eqref{Equation-exit-measure}, we conclude that
	  	 \begin{align}
	  	 	 U(z) = -\log \mathbb{E}_{\delta_z}\left(\exp\left\{-U(y) X_{(y,\infty)}(\left\{y\right\})\right\}\right) = -\log \mathbb{E}_{\delta_{z-y}}\left(\exp\left\{-U(y) X_{(0,\infty)}(\left\{0\right\})\right\}\right) ,
	  	 \end{align}
	  	 where in the last equality we used the spatial homogeneous property of
super $\alpha$-stable process. Therefore, replacing $z$ by $z+y$ first and then letting $y\to 0+$, we conclude that
	  	 \begin{align}
	  	 	 U(z)= \lim_{y\to0} U(z+y)= -\lim_{y\to0}\log \mathbb{E}_{\delta_{z}}\left(\exp\left\{-U(y) X_{(0,\infty)}(\left\{0\right\})\right\}\right)  = -\log \mathbb{P}_{\delta_z}\left(X_{(0,\infty)}(\{0\})=0\right),
	  	 \end{align}
	  	 which implies the desired result.
	  \end{proof}
	
	\noindent
	{\bf Acknowledgements:}
	We thank Yichao Huang for helpful discussions. 
	\bigskip

			\small

\vskip 0.2truein
\vskip 0.2truein

\noindent{\bf Haojie Hou:}   School of Mathematics and Statistics, Beijing Institute of Technology, Beijing 100081, P. R. China.
 Email: {\texttt
houhaojie@bit.edu.cn}

\smallskip

\noindent{\bf Yiyang Jiang:}  School of Mathematical Sciences, Peking
University,   Beijing, 100871, P.R. China. Email: {\texttt
	jyy.0916@stu.pku.edu.cn}

\smallskip

\noindent{\bf Yan-Xia Ren:} LMAM School of Mathematical Sciences \& Center for
Statistical Science, Peking
University,  Beijing, 100871, P.R. China. Email: {\texttt
yxren@math.pku.edu.cn}

\smallskip
\noindent {\bf Renming Song:} Department of Mathematics,
University of Illinois Urbana-Champaign,
Urbana, IL 61801, U.S.A.
Email: {\texttt rsong@illinois.edu}

	\end{document}